\renewcommand{\d}{\partial}
\newcommand{\ddbar}{\sqrt{-1}\d\overline{\d}}
\newtheorem{thm}{Theorem}
\newtheorem{prop}[thm]{Proposition}
\newtheorem{lem}[thm]{Lemma}
\newtheorem{cor}[thm]{Corollary}
\newtheorem{conj}[thm]{Conjecture}
\theoremstyle{definition}
\newtheorem{defn}[thm]{Definition}
\newtheorem{remark}[thm]{Remark}
\renewcommand{\[}{\begin{equation}}
\renewcommand{\]}{\end{equation}}
\title{Fully non-linear elliptic equations on compact Hermitian manifolds}
\author{G\'abor Sz\'ekelyhidi}
\address{Department of Mathematics, University of Notre Dame, Notre
  Dame, IN 46556}
\email{gszekely@nd.edu}
\begin{document} 

\begin{abstract}
We derive a priori estimates for solutions of a general class of fully
non-linear equations on compact Hermitian manifolds.
Our method is based on ideas that have been used
for different specific equations, such as the complex Monge-Amp\`ere, 
 Hessian and inverse Hessian equations. As an application we solve
 a class of Hessian quotient equations on K\"ahler manifolds
 assuming the existence of a
suitable subsolution. The method also applies to analogous equations
on compact Riemannian manifolds. 
\end{abstract}

\maketitle 

\section{Introduction}
Let $(M,\alpha)$ be a compact Hermitian manifold of dimension $n$, and fix a real
$(1,1)$-form $\chi$. For any $C^2$ function $u:M\to\mathbf{R}$ we
obtain a new real $(1,1)$-form $g = \chi + \ddbar u$, and we can
define the endomorphism of $T^{1,0}M$ given by $A^i_j =
\alpha^{i\bar p}g_{j\bar p}$. This is a Hermitian endomorphism with
respect to the metric $\alpha$. We consider equations for $u$ that can
be written in the form
\[ \label{eq:eqn} F(A) = h \]
for a given function $h$ on $M$, where
\[ F(A) = f(\lambda_1,\ldots, \lambda_n) \]
is a smooth symmetric function of the eigenvalues of $A$. Such equations have
been studied extensively in the literature, going back to the work of
Caffarelli-Nirenberg-Spruck~\cite{CNS3} on the Dirichlet problem
in the real case, when $\alpha$
is the Euclidean metric and $M$ is a domain in $\mathbf{R}^n$.

We assume that $f$ is defined in an open symmetric cone
$\Gamma\subsetneq\mathbf{R}^n$, with vertex at the origin, containing the
positive orthant $\Gamma_n$. In addition 
\begin{itemize}
\item[(i)] $f_i > 0$ for all $i$, and $f$ is concave,  
\item[(ii)] $\displaystyle{\sup_{\d \Gamma} f < \inf_M h}$,
\item[(iii)] For any $\sigma < \sup_\Gamma f$ and $\lambda\in \Gamma$
  we have $\lim_{t\to\infty} f(t\lambda) > \sigma$. 
\end{itemize}
Assumption (ii) ensures that the relevant level sets of $f$ do not
intersect the boundary of $\Gamma$. Assumption
(iii) is satisfied by many natural equations, for instance
if $f$ is homogeneous of degree 1 and $f > 0$ in $\Gamma$. 

\begin{defn}\label{defn:subsol}
We say that a smooth function $\underline{u}$ is
a $\mathcal{C}$-subsolution of \eqref{eq:eqn}, if the following
condition holds. At each $x\in M$, define the matrix $B^i_j = \alpha^{i\bar
  p}(\chi_{j\bar p} + \d_j\d_{\bar p} \underline{u})$. Then we require
that for each $x\in M$ the set
\[ \{ \lambda' \in \Gamma\,:\, f(\lambda') = h(x)\,\text{ and }
\lambda' - \lambda(B(x))\in \Gamma_n\} \]
is bounded, where $\lambda(B(x))$ denotes the $n$-tuple of eigenvalues
of $B(x)$. 
\end{defn}

In Section~\ref{sec:subsol} we will describe the relationship between
this notion and that introduced by Guan~\cite{Guan14}. 
Our main result is the following.

\begin{thm}\label{thm:main} Suppose that $u$ is a solution, and
  $\underline{u}$ is a $\mathcal{C}$-subsolution
  of Equation~\ref{eq:eqn}. If we normalize $u$ so that $\sup_M u = 0$,
  then we have an estimate $\Vert
  u\Vert_{C^{2,\alpha}} < C$, where $C$ depends on the given data
  $M,\alpha,\chi,h$, and the subsolution $\underline{u}$. 
\end{thm}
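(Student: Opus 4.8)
The plan is to establish, in this order, a uniform bound $\|u\|_{C^0}\le C$, a second-order bound $\sup_M|\ddbar u|\le C(1+\sup_M|\nabla u|^2)$, and a gradient bound $\sup_M|\nabla u|\le C$; together these confine the eigenvalues $\lambda(A)$ to a fixed compact subset of $\Gamma$, so that \eqref{eq:eqn} becomes uniformly elliptic and concave in $\ddbar u$, whereupon the Evans--Krylov theorem yields an interior $C^{2,\alpha}$ estimate and, since $M$ is closed, the full statement. The two structural inputs are the concavity and positivity in (i) and the $\mathcal{C}$-subsolution hypothesis; assumption (ii) keeps the relevant level sets of $f$ away from $\d\Gamma$, and (iii) will be used only in the gradient estimate.

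For the $C^0$ estimate I would follow Błocki's argument based on the Alexandrov--Bakelman--Pucci inequality. Set $w=u-\underline u$ and, working in a fixed coordinate ball around a point where $w$ is minimal, consider the lower contact set of $w$ with affine functions of controlled slope. On this set $D^2w\ge 0$, hence $\ddbar u\ge\ddbar\underline u$, i.e. $A\ge B$; combined with $f(\lambda(A))=h$ and the boundedness built into Definition~\ref{defn:subsol}, this forces $\lambda(A)$ into a compact set, so $\operatorname{tr}_\alpha A$ and therefore the real Laplacian $\Delta w$ are bounded on the contact set, giving $\det D^2w\le C$ there. The ABP inequality then bounds $-\inf_M w$, hence $\inf_M u$, by a constant depending on the data and $\underline u$; with the normalization $\sup_M u=0$ this gives $\|u\|_{C^0}\le C$.

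The second-order estimate is the heart of the matter and the step I expect to be the main obstacle. Following the now-standard approach, I would apply the maximum principle to a test function of the form $W=\log\lambda_1(A)+\varphi(|\nabla u|_\alpha^2)+\psi(u)$, where $\lambda_1$ is the largest eigenvalue of $A$ (made simple by a small perturbation near the maximum point of $W$) and $\varphi,\psi$ are suitable one-variable functions. Differentiating $F(A)=h$ once and twice, using the first-order condition at the maximum to eliminate $\nabla\lambda_1$ and the concavity of $f$ to dominate the second derivatives of the eigenvalues, one arrives at an inequality for $\lambda_1$. Two features make this delicate here: the torsion of $\alpha$ generates extra first- and third-order terms upon commuting covariant derivatives, which have to be absorbed using the good positive term $\varphi''\,|\nabla(|\nabla u|^2)|^2$ together with $f_i>0$; and one must distinguish the case in which $\lambda_1$ is much larger than $\lambda_n$ from the case of comparable eigenvalues. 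In the former case the $\mathcal{C}$-subsolution enters through the linear-algebraic fact---which I would isolate as a lemma---that for $\lambda(A)$ outside a large compact set one has $\sum_i f_i(\lambda(A))\big(\lambda_i(B)-\lambda_i(A)\big)\ge\kappa\sum_i f_i(\lambda(A))$ for a uniform $\kappa>0$; this quantitative version of Guan's argument supplies the negative term that closes the estimate. In the latter case \eqref{eq:eqn} is uniformly elliptic on its own and a direct computation suffices.

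Finally, given $\|u\|_{C^0}\le C$ and $|\ddbar u|\le C(1+|\nabla u|^2)$, I would obtain the gradient bound by a blow-up argument in the spirit of Dinew--Kołodziej. If $\sup_M|\nabla u|$ were not uniformly bounded, rescaling around a suitably chosen near-maximum of $|\nabla u|$ produces, in the limit, a bounded entire function $w$ on $\mathbf{C}^n$ with $|\nabla w(0)|=1$ satisfying $f(\lambda(\ddbar w))=\mathrm{const}$ and $\lambda(\ddbar w)\in\Gamma$ for the Euclidean background (the forms $\chi$ and the torsion terms scale away); the second-order bound guarantees uniform $C^{1,1}_{\mathrm{loc}}$ control along the rescaling, and assumptions (ii) and (iii) ensure the limiting equation falls under a Liouville-type theorem forcing $w$ to be constant---contradicting $|\nabla w(0)|=1$. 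This yields $\sup_M|\nabla u|\le C$, and as explained above Evans--Krylov then completes the proof.
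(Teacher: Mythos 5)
Your skeleton (ABP-based $C^0$ bound, a Hou--Ma--Wu-type estimate $|\d\overline{\d}u|\le C(1+\sup|\nabla u|^2)$, a blow-up plus Liouville argument for the gradient, then Evans--Krylov) is exactly the paper's, but two of your key steps have genuine gaps. First, the ``linear-algebraic fact'' you isolate is false as stated: for $\lambda(A)\in\d\Gamma^{h(x)}$ outside a large ball one cannot conclude $\sum_i f_i(\lambda)(\mu_i-\lambda_i)\ge\kappa\,\mathcal{F}(\lambda)$. The correct statement (Propositions~\ref{prop:subsol1} and~\ref{prop:subsol2}) is a dichotomy: either that inequality holds, or $f_i(\lambda)\ge\kappa\,\mathcal{F}(\lambda)$ for \emph{every} $i$ (already for $f$ linear the first alternative can fail at all large points of the level set, while the second holds trivially). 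Both branches are needed to close the maximum-principle argument, and your assignment of roles to the two cases is inverted: in the Hou--Ma--Wu splitting it is the case $-\lambda_n\le\delta\lambda_1$ in which the subsolution dichotomy is invoked, whereas the case $-\lambda_n>\delta\lambda_1$ closes using the good quadratic term $\frac{1}{K}F^{kk}\lambda_k^2\ge \frac{\delta^2}{nK}\mathcal{F}\lambda_1^2$ with no use of $\underline{u}$ at all; the claim that the ``comparable eigenvalues'' case is uniformly elliptic on its own is unjustified --- nothing there controls $F^{11}/F^{nn}$, and it is precisely the second branch of the dichotomy ($F^{ii}\ge\kappa\mathcal{F}$ for all $i$) that rescues that case.

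Second, in the blow-up step the limit $w$ does \emph{not} satisfy $f(\lambda(w_{i\bar j}))=\mathrm{const}$ with $\lambda\in\Gamma$: after rescaling the equation reads $f\big(N\lambda[\widetilde{\alpha}^{j\bar p}(\chi_{i\bar p}+u_{i\bar p})]\big)=h$, and assumption (iii) forces the eigenvalues of the limit out of $\Gamma$; the limit is only a $C^{1,\alpha}$ function solving the \emph{degenerate} problem in the viscosity sense ($\lambda\in\overline{\Gamma}$ when touched from above, $\lambda\notin\Gamma$ when touched from below). Accordingly, the Liouville theorem you invoke is not the Dinew--Ko\l odziej statement, which concerns the Hessian cones $\Gamma_m$ only; for a general cone $\Gamma$ it must be formulated in this viscosity framework and proved --- the paper does this in Section~\ref{sec:DK} by induction on dimension via the restricted cone $\Gamma'=\Gamma\cap\{x_n=0\}$, and citing it as available is a gap. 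A smaller but real gap is in your $C^0$ step: ABP on a coordinate ball around the minimum does not by itself bound $-\inf_M u$, because the boundary values of $u$ on that ball are not controlled; one needs an a priori bound on $\Vert u\Vert_{L^1}$ (or $\Vert |u|^p\Vert_{L^1}$), obtained from the Green's function or the weak Harnack inequality, and the contradiction comes from comparing the ABP lower bound $c_0\epsilon^{2n}\le C\,\mathrm{vol}(P)$ with the smallness of the sublevel set $\{v<L+\epsilon/2\}$ forced by that integral bound when $|L|$ is large.
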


Note that the main result of Guan~\cite{Guan14} is a similar estimate on
Riemannian manifolds, but there the constant $C$ depends in addition
on a $C^1$-bound for $u$. In the Riemannian case this $C^1$-bound can
be obtained under certain extra assumptions, as shown in
\cite{Guan14}, using work of Li~\cite{Li90} and Urbas~\cite{Urb04}. In
addition the subsolution condition in \cite{Guan14} is more
restrictive than ours. As we will discuss in
Section~\ref{sec:Riemannian}, our methods apply with almost no change to
the Riemannian case as well, resulting in an estimate analogous to
Theorem~\ref{thm:main}. 

We first prove
a $C^0$-estimate, generalizing the approach of Blocki~\cite{Bl05_1},
using the Alexandroff-Bakelman-Pucci maximum principle, in the case of
the complex Monge-Amp\`ere equation. For higher order
estimates we use the
method that was employed in the case of the complex Hessian
equations. In other words a $C^1$-bound is derived by combining a second derivative
bound of the form
\[ \label{eq:C2C1} \sup |\d\overline{\d}u| \leq C(1 + \sup |\nabla u|^2), \]
due to Hou-Ma-Wu~\cite{HMW10} in the case of the Hessian equation, with a blowup argument and
Liouville-type theorem due to Dinew-Kolodziej~\cite{DK12}. The gradient
bound combined with \eqref{eq:C2C1} then bounds $|\d\overline{\d}u|$, at which
point the Evans-Krylov theory~\cite{Ev82, Kry82}, adapted to the
complex setting (see for instance
Tosatti-Wang-Weinkove-Yang~\cite{TWWY14})
 can be used to obtain the required
$C^{2,\alpha}$-estimate. Note that as
a consequence of the blowup argument the constant $C$ is not explicit
in Theorem~\ref{thm:main}.

Perhaps the most important equation of the form \eqref{eq:eqn} is
the complex Monge-Amp\`ere equation,
where we take $f = \log\lambda_1\cdot\ldots\cdot\lambda_n$.
The Monge-Amp\`ere
equation was first solved on compact K\"ahler manifolds by
Yau~\cite{Yau78}, and on compact Hermitian manifolds by
Tosatti-Weinkove~\cite{TW10} with some earlier work by
Cherrier~\cite{Ch87}, Hanani~\cite{Ha96} 
and Guan-Li~\cite{GL09}. See also Phong-Song-Sturm~\cite{PSS12_1}
for a recent survey. 
 Note that in this case $\underline{u}$ being a $\mathcal{C}$-subsolution
is equivalent to $\chi + \ddbar \underline{u}$ being
positive definite, and so Theorem~\ref{thm:main} can be used to
recover these existence results.  

A related equation, the Monge-Amp\`ere equation for
$(n-1)$-plurisubharmonic functions, was introduced by
Fu-Wang-Wu~\cite{FWW10}. In
this case we take $f = \log\widetilde{\lambda}_1\cdot\ldots\cdot
\widetilde{\lambda}_n$, where
\[ \widetilde{\lambda}_k = \frac{1}{n-1}\sum_{i\not= k} \lambda_i \]
for each $k$. In terms of matrices, if we define the map $P(M) =
\frac{1}{n-1}(\mathrm{Tr}(M)\cdot I - M)$, then the corresponding
operator $F$ is given by $F(A) = \log\det P(A)$. For this equation,
Theorem~\ref{thm:main} recovers the a priori estimates of
Tosatti-Weinkove~\cite{TW13_1, TW13}, and more general equations in terms of the
$\widetilde{\lambda}_k$ can be considered as well. We will discuss
this in more detail in Section~\ref{sec:ex}. 

A setting when the subsolution property is more subtle is the inverse
$\sigma_k$-equations for $1\leq k \leq n-1$, where we take
\[ f = \left(\frac{\sigma_n}{\sigma_k}\right)^{\frac{1}{n-k}}, \]
for the elementary symmetric functions $\sigma_i$, and the cone
$\Gamma=\Gamma_n$. When $h$ is constant, the equation can be written as
\[ \omega^{n-k}\wedge \alpha^k = c\omega^n, \]
for a constant $c$, where $\omega = \chi + \ddbar u$ is the unknown
metric. When $\alpha, \chi$ are K\"ahler, then we can determine $c$ a priori, since
\[ \label{eq:cfixed} c = \frac{[\omega]^{n-k}\cup [\alpha]^k}{[\omega]^n}. \]
Fixing this value of $c$, if $k=n-1$ then Song-Weinkove~\cite{SW04}
showed that a solution exists if there is a metric
 $\chi' = \chi + \ddbar \underline{u}$
satisfying
\[ \label{eq:SWcone} nc \chi'^{n-1} - (n-1)\chi'^{n-2}\wedge\alpha > 0 \]
in the sense of positivity of $(n-1,n-1)$-forms. This turns out to be
the same as $\underline{u}$ being a $\mathcal{C}$-subsolution. This
result was later generalized by Fang-Lai-Ma~\cite{FLM11} to general $k$,
and existence results for general $k$ and non-constant $h$ on
Hermitian manifolds were obtained by Guan-Sun~\cite{GS13}, Sun~\cite{Sun13}.

Using the continuity method, Theorem~\ref{thm:main} 
can be used to obtain such existence results for
Equation~\eqref{eq:eqn}, under certain assumptions, however it seems
to be difficult to state a satisfactory general existence result, whenever the
subsolution condition is non-trivial in the sense that it depends on
$h$. We give one such result,
Proposition~\ref{prop:riemexist} in the Riemannian case, and the same
proof works in the Hermitian case too. The difficulty is that one needs
extra conditions to ensure that we have a subsolution along the whole
continuity path (see also Guan-Sun~\cite{GS13} for such results in the case
of the inverse $\sigma_k$ equation). The source of this difference between 
Equation~\eqref{eq:eqn} on a compact manifold, and the corresponding Dirichlet
problem,
is that on a compact manifold the constant functions are in the cokernel of
the linearization. 

As an illustration, we will consider general
Hessian quotient equations, of the form
\[ \label{eq:hessquoteq}\omega^l\wedge \alpha^{n-l} = c\omega^k\wedge \alpha^{n-k}, \]
where $(M,\alpha)$ is K\"ahler, $1 \leq l < k\leq n$, the form
$\omega = \chi + \ddbar u$ is the unknown, and $c$ is
determined by
\[ c = \frac{\int_M \chi^l \wedge\alpha^{n-l}}{\int_M \chi^k\wedge
  \alpha^{n-k}}. \]
In analogy with the results of Song-Weinkove and Fang-Lai-Ma for the
case $k=n$, we will show the following.

\begin{cor}\label{cor:hessquot}
  Suppose that there is a form $\chi' = \chi + \ddbar\underline{u}$
  which is $k$-positive (i.e. the eigenvalues satisfy $\sigma_1,
  \ldots, \sigma_k > 0$), and in addition
  \[ kc\chi'^{k-1}\wedge \alpha^{n-k} - l
  \chi'^{l-1}\wedge\alpha^{n-l} > 0 \]
  in the sense of positivity of $(n-1,n-1)$-forms. Then
  \eqref{eq:hessquoteq} has a solution $\omega = \chi + \ddbar u$. 
\end{cor}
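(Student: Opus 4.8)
The plan is to realize \eqref{eq:hessquoteq} as an instance of Equation~\eqref{eq:eqn} to which Theorem~\ref{thm:main} applies, and then to run the continuity method to produce a solution. First I would rewrite the Hessian quotient equation in the form $F(A) = h$. Dividing through, \eqref{eq:hessquoteq} reads $\sigma_l(\lambda)/\sigma_k(\lambda) = c$ for $\lambda = \lambda(A)$, where $A^i_j = \alpha^{i\bar p}(\chi_{j\bar p} + \d_j\d_{\bar p}u)$ and $\sigma_m$ denotes the $m$-th elementary symmetric polynomial. Since $l < k$ it is natural to set
\[
  f(\lambda) = \left(\frac{\sigma_l(\lambda)}{\sigma_k(\lambda)}\right)^{\frac{1}{k-l}},
\]
defined on the cone $\Gamma = \Gamma_k = \{\lambda : \sigma_1(\lambda), \ldots, \sigma_k(\lambda) > 0\}$, and take $h \equiv c^{1/(k-l)}$ (a positive constant, since $c > 0$ by the hypothesis that $\chi'$ is $k$-positive). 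It is classical that on $\Gamma_k$ this $f$ satisfies $f_i > 0$, is concave, and is homogeneous of degree one with $f > 0$; hence it also satisfies conditions (i) and (iii) from the introduction. Condition (ii) holds because on $\d\Gamma_k$ one has $\sigma_k = 0$ while $\sigma_l$ stays bounded on bounded sets, so $f \to 0$ there in the relevant sense, and $\inf_M h = c^{1/(k-l)} > 0$; more carefully one checks, using homogeneity, that $\sup_{\d\Gamma_k} f = 0 < h$.

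Next I would verify that the hypothesis of the corollary is exactly the $\mathcal{C}$-subsolution condition of Definition~\ref{defn:subsol} for $\underline{u}$. The positivity assumption
\[
  kc\,\chi'^{k-1}\wedge\alpha^{n-k} - l\,\chi'^{l-1}\wedge\alpha^{n-l} > 0
\]
as an $(n-1,n-1)$-form translates, after diagonalizing $B = \alpha^{-1}\chi'$ at a point with eigenvalues $\mu = (\mu_1,\ldots,\mu_n)$, into the inequalities
\[
  kc\,\sigma_{k-1}(\mu|i) - l\,\sigma_{l-1}(\mu|i) > 0 \quad \text{for each } i,
\]
where $\sigma_m(\mu|i)$ omits the variable $\mu_i$. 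One recognizes the left-hand side, up to a positive factor, as $\sum_j f_j(\mu')$ evaluated at a point $\mu'$ on the level set $\{f = h\}$ directly above $\mu$ in the $\mu_i$-direction, and the standard argument (as in Guan~\cite{Guan14} and the discussion in Section~\ref{sec:subsol}) shows that this positivity for every $i$ is equivalent to boundedness of the set $\{\lambda' \in \Gamma : f(\lambda') = h \text{ and } \lambda' - \mu \in \Gamma_n\}$. So $\underline{u}$ is a $\mathcal{C}$-subsolution. The main subtlety here is checking the equivalence of the $(n-1,n-1)$-positivity with the pointwise eigenvalue inequalities and then with Definition~\ref{defn:subsol}; I expect to lean on the general subsolution discussion promised in Section~\ref{sec:subsol} rather than redo it.

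Finally I would set up the continuity method. Consider the family
\[
  \left(\frac{\sigma_l(\lambda(A_t))}{\sigma_k(\lambda(A_t))}\right)^{\frac{1}{k-l}} = c_t, \qquad t \in [0,1],
\]
connecting a solution at $t=0$ — say $u \equiv 0$ with $\omega = \chi'$, after replacing $\chi$ by $\chi'$, which costs nothing since the statement is about the class, and adjusting $c_0$ accordingly so that $\chi'$ itself solves the $t=0$ equation — to the desired equation at $t=1$. Openness is standard: the linearization at a solution is $u \mapsto F^{i\bar j}\d_i\d_{\bar j}u$ plus lower order, which on a K\"ahler manifold is elliptic and self-adjoint up to the cokernel of constants; since we are on a compact K\"ahler manifold and the equation is scalar, the usual implicit function theorem argument with the normalization $\sup_M u = 0$ and the freedom to vary $c_t$ handles the constant cokernel. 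For closedness, one needs uniform a priori estimates: Theorem~\ref{thm:main} gives $\|u_t\|_{C^{2,\alpha}} < C$ provided $\underline{u}$ remains a $\mathcal{C}$-subsolution for the $t$-equation, which it does because $c_t$ can be chosen to interpolate while keeping the positivity $kc_t\chi'^{k-1}\wedge\alpha^{n-k} - l\chi'^{l-1}\wedge\alpha^{n-l} > 0$ — this is an open condition and is preserved along a suitable path, in fact a convex combination works since the set of $c$ for which it holds is an interval. Higher regularity and a standard bootstrap then give smoothness of $u_1$, completing the proof. The one genuine obstacle is ensuring the subsolution survives along the entire continuity path; as the paper itself emphasizes, this is delicate when the subsolution condition depends on $h$, but in the present Hessian-quotient case with $h$ constant the dependence is only through the single constant $c$, and the interval structure of the admissible $c$'s makes the path easy to arrange.
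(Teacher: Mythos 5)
The central gap is your treatment of the constants along the continuity path. In your family the operator is the same for every $t$ and only the constant $c_t$ varies, and this fails in two ways. First, the starting point does not exist: $u\equiv 0$ (i.e.\ $\omega=\chi'$) does not solve $(\sigma_k/\sigma_l)^{1/(k-l)}=c_0$ for any \emph{constant} $c_0$, because $\chi'^l\wedge\alpha^{n-l}\big/\chi'^k\wedge\alpha^{n-k}$ is in general a non-constant function on $M$. Second, and more seriously, $c_t$ is not a parameter you are free to choose: since the constants span the cokernel of the linearization on a compact manifold, at each $t$ the admissible constant is forced by the equation, and to apply Theorem~\ref{thm:main} you must verify that $\underline{u}$ is a $\mathcal{C}$-subsolution for that forced value. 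Your assertion that ``$c_t$ can be chosen to interpolate while keeping the positivity'' is circular at precisely this point, which is the difficulty the paper itself singles out. The paper's proof avoids it by using a different path: it writes the equation with $f=-\binom{n}{l}^{-1}\sigma_l\big/\binom{n}{k}^{-1}\sigma_k$ (still concave, as $f=-g^{-(k-l)}$ for the usual concave $g$) and interpolates with the pure $\sigma_k$-Hessian operator, i.e.\ solves \eqref{eq:conteq}. Then $t=0$ is solvable by Proposition~\ref{prop:Hessian}, and integrating \eqref{eq:conteq} against $\omega^k\wedge\alpha^{n-k}$ — using that $\alpha,\chi$ are closed so the integrals are cohomological — gives the a priori bound $c_t\geq tc$. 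Combined with the $k$-positivity of $\chi'$ and the hypothesis \eqref{eq:aa2}, and the $f_\infty$ criterion of Remark~\ref{rem:subsol2}, this shows $\underline{u}$ is a $\mathcal{C}$-subsolution of the $t$-equation for \emph{every} $t$, which is what makes the closedness step work. Without some mechanism of this kind for controlling the forced constants $c_t$ in terms of the fixed cohomological constant $c$, your continuity argument does not close.

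A secondary error: the operator you wrote down, $f=(\sigma_l/\sigma_k)^{1/(k-l)}$, is decreasing in each $\lambda_i$, convex (it is the reciprocal of the positive concave function $(\sigma_k/\sigma_l)^{1/(k-l)}$), and homogeneous of degree $-1$, so the properties you claim for it — conditions (i) and (iii) — fail as stated; you presumably mean $(\sigma_k/\sigma_l)^{1/(k-l)}$ with $h\equiv c^{-1/(k-l)}$. That slip is fixable, but note that the paper deliberately avoids the standard quotient form as well, precisely because the form $-\sigma_l/\sigma_k$ is what makes the integration trick above yield $c_t\geq tc$. Your translation of the $(n-1,n-1)$-positivity hypothesis into the $\mathcal{C}$-subsolution condition (via the limits $\sigma_{k-1}(\mu')$, $\sigma_{l-1}(\mu')$ as one eigenvalue tends to infinity) is in the right spirit and matches the paper's computation.
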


There do not seem to be any previous existence results on compact manifolds 
for these equations in the literature when $k < n$, although a priori $C^0$
bounds for the solution $u$ have been found recently by
Sun~\cite{Sun14, Sun14_1}. The
corresponding Dirichlet problem on Euclidean domains does not fit into
the framework of Caffarelli-Nirenberg-Spruck~\cite{CNS3}, but was
subsequently  solved by Trudinger~\cite{Tru95}. 

It is an interesting problem to find geometric assumptions under which
the existence of a $\mathcal{C}$-subsolution can be ensured. In the
case of the Dirichlet problem in Euclidean domains $\Omega$,
Caffarelli-Nirenberg-Spruck~\cite{CNS3} showed that a subsolution 
exists under a suitable convexity type condition on the boundary
$\d\Omega$ (see also Li~\cite{LS04_1} for analogous results in the
complex case).
For the complex Monge-Amp\`ere equation on compact K\"ahler
manifolds,  the result of
Demailly-Paun~\cite{DP04}, characterizing the K\"ahler cone, gives
such a geometric condition. Indeed, this result shows that a real $(1,1)$-class 
$[\chi]$ on a compact K\"ahler manifold $(M,\alpha)$ contains a
K\"ahler metric, if and only if for all analytic subvarieties
$V\subset M$ of dimension $p=1,\ldots, n$ we have
\[ \int_V \chi^k\wedge \alpha^{p-k} > 0, \text{ for }1 \leq k \leq p. \]

In \cite{LSz13}, Lejmi and the author proposed a similar
condition, conjectured to ensure the existence of a metric $\chi' \in
[\chi]$ satisfying the positivity condition \eqref{eq:SWcone}. The
condition is that $[\chi]$ admits a K\"ahler metric, and in addition
\[ \int_V c\chi^p - p \chi^{p-1}\wedge\alpha > 0 \]
for all analytic subvarieties $V\subset M$ of dimension
$p=1,\ldots,n - 1$. For $V=M$ equality has to hold by
\eqref{eq:cfixed}. Recently, in \cite{CSz14}, Collins and the author
resolved this conjecture on toric manifolds. We expect that analogous
results should hold for a large class of equations on K\"ahler
manifolds, and we state a conjecture to this effect for the Hessian
quotient equations in Section~\ref{sec:ex}.
In addition it is natural to expect that for the Dirichlet
problem on K\"ahler manifolds with boundary, the appropriate
subsolutions can be constructed whenever the boundary satisfies a
suitable convexity assumption, and a geometric condition as above is
satisfied for all compact subvarieties of the interior. We hope to
explore such results in future work. 

In Section~\ref{sec:subsol} we give the basic definition and
properties of $\mathcal{C}$-subsolutions. We prove
$C^0$-estimates in Section~\ref{sec:C0}, generalizing the approach of
Blocki~\cite{Bl05_1}. We prove a $C^2$-estimate of the form
\eqref{eq:C2C1} in Section~\ref{sec:C2}, modeled on the work of
Hou-Ma-Wu~\cite{HMW10}. To complete
the proof of Theorem~\ref{thm:main} we use a blowup argument and Liouville-type theorem
analogous to those of Dinew-Kolodziej~\cite{DK12} in
Sections~\ref{sec:DK}, \ref{sec:blowup}.  In
Section~\ref{sec:ex} we give the proof of
Corollary~\ref{cor:hessquot}. Finally in Section~\ref{sec:Riemannian}
we discuss analogous problems on compact Riemannian manifolds. 

\section{Subsolutions}\label{sec:subsol}
As in the introduction, let $\Gamma\subsetneq \mathbf{R}^n$ be a
symmetric, open, convex cone with vertex at the origin, containing the positive
orthant $\Gamma_n$, and let $f : \Gamma\to\mathbf{R}$ be a smooth,
concave function, satisfying the monotonicity condition $f_i > 0$ for
all $i$. We denote by $\mathcal{F}$ the function $\mathcal{F}(\lambda)
= \sum_i f_i(\lambda)$. 

Define
\[ \sup_{\d \Gamma} = \sup_{\lambda' \in \d\Gamma}
\limsup_{\lambda\to\lambda'} f(\lambda). \]
For any $\sigma > \sup_{\d\Gamma}$, the set
\[ \Gamma^\sigma = \{\lambda\,:\, f(\lambda) > \sigma\} \]
is a convex open set. Fix a value of $\sigma$ for which
$\Gamma^\sigma \ne \emptyset$. Then the level set $\d \Gamma^\sigma =
f^{-1}(\sigma)$ is a smooth hypersurface. In view of
Definition~\ref{defn:subsol} we are interested in those
$\mu\in\mathbf{R}^n$, for which the set $(\mu + \Gamma_n)\cap
\d\Gamma^\sigma$ is bounded. These $\mu$ represent the possible
eigenvalues of a $\mathcal{C}$-subsolution.

For any $\lambda\in \d\Gamma^\sigma$ let us
write $\mathbf{n}_\lambda$ for the inward pointing unit normal vector, i.e. 
\[ \mathbf{n}_\lambda = \frac{\nabla f}{|\nabla f|} \]
Note that since $f_i > 0$ for all $i$, we have
\[ \sum_{i=1}^n f_i(\lambda)^2 \leq \left(\sum_{i=1}^n
  f_i(\lambda)\right)^2 \leq n \sum_{i=1}^n f_i(\lambda)^2, \]
and so
\[ \label{eq:s2} |\nabla f| \leq \mathcal{F} \leq \sqrt{n}|\nabla f|. \]
In particular the unit normal $\mathbf{n}$ is a bounded multiple of
$\mathcal{F}^{-1} \nabla f$. 

\begin{remark}
  Using this setup,  Guan~\cite{Guan14} introduced a convex open set
  $\mathcal{C}^+_\sigma\subset \Gamma$, which consists of those $\mu$
  for which the set
  \[ \d\Gamma^\sigma(\mu) = \{ \lambda\in \d\Gamma^\sigma\,:\, (\lambda - \mu)\cdot
  \mathbf{n}_\lambda > 0 \} \]
  is bounded. In turn this leads to a notion of subsolution for the
  equation $F(A)=h$ similar to Definition~\ref{defn:subsol}, except one
  requires that $\lambda(B)\in \mathcal{C}^+_{h(x)}$ at each point.
  Since $\mathbf{n}$ has positive entries, we have
  \[ (\mu + \Gamma_n)\cap \d\Gamma^\sigma \subset
  \d\Gamma^\sigma(\mu), \]
  and so this notion of subsolution is more restrictive than that of a
  $\mathcal{C}$-subsolution. As a simple example, if $\d
  \Gamma^\sigma$ is a hyperplane, as is the case for a linear elliptic
  equation, then $\mathcal{C}^+_\sigma$ is
  empty, but $(\mu + \Gamma_n)\cap \d\Gamma^\sigma$ is bounded for all
  $\mu\in \mathbf{R}^n$. 
\end{remark}

The main result that we need
is the following, which is a refinement of \cite[Theorem
2.16]{Guan14}. 

\begin{prop}\label{prop:subsol1}
  Suppose that $\mu\in \mathbf{R}^n$ is such that for some $\delta, R > 0$
  \[ \label{eq:s1}
  (\mu - 2\delta\mathbf{1} + \Gamma_n)\cap \d\Gamma^\sigma \subset B_R(0),\]
  where $B_R(0)$ is the ball of radius $R$ around the origin.

  Then there is a constant $\kappa > 0$ depending on $\delta$ and on
  the set in \eqref{eq:s1} (more precisely the normal vectors of
  $\d\Gamma^\sigma$ on this set),
  such that if $\lambda\in \d\Gamma^\sigma$ and $|\lambda| >
  R$, then either
  \[ \label{eq:s3} \sum_{i=1}^n f_i(\lambda)(\mu_i - \lambda_i) > \kappa\mathcal{F}(\lambda), \]
  or $f_i(\lambda) > \kappa \mathcal{F}(\lambda)$ for all $i$.
\end{prop}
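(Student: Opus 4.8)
The plan is to argue by contradiction using the geometry of the convex hypersurface $\d\Gamma^\sigma$. Suppose the statement fails; then for every $\kappa>0$ there is a point $\lambda\in\d\Gamma^\sigma$ with $|\lambda|>R$ such that both $\sum_i f_i(\lambda)(\mu_i-\lambda_i)\le\kappa\mathcal F(\lambda)$ and $f_{i_0}(\lambda)\le\kappa\mathcal F(\lambda)$ for some index $i_0$. Taking $\kappa\to 0$ produces a sequence $\lambda^{(k)}\in\d\Gamma^\sigma$ with $|\lambda^{(k)}|\to\infty$ (since the bounded part of $\d\Gamma^\sigma$ is compact and the inequalities are strict there), along which, after passing to a subsequence, the normalized gradients $\mathbf n_{\lambda^{(k)}}=\nabla f/|\nabla f|$ converge to a unit vector $\mathbf n_\infty$ with nonnegative entries (each $f_i\ge 0$), some fixed entry $(\mathbf n_\infty)_{i_0}=0$, and $\mathbf n_\infty\cdot(\mu-\lambda^{(k)})/\mathcal F(\lambda^{(k)})\to$ a nonpositive limit. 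The key point to extract is a limiting supporting hyperplane.

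The main step is to understand the "direction at infinity" of the sequence. Write $\lambda^{(k)}=|\lambda^{(k)}|\,v^{(k)}$ and pass to a subsequence so that $v^{(k)}\to v_\infty$, a unit vector. Because $\Gamma^\sigma$ is convex with the origin not in its closure (as $\sigma>\sup_{\d\Gamma}f$, so $f(0)$ — if defined — would have to exceed $\sigma$... more carefully: $0\in\d\Gamma$, and near $\d\Gamma$ one has $f\le\sup_{\d\Gamma}f<\sigma$, so $0\notin\overline{\Gamma^\sigma}$), the recession cone of $\Gamma^\sigma$ is a nontrivial convex cone, and $v_\infty$ lies in it; moreover $\Gamma^\sigma\supset\lambda^{(k)}+\Gamma_n$ locally forces $\Gamma_n$ to be contained in the recession cone. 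The supporting hyperplane of $\Gamma^\sigma$ at $\lambda^{(k)}$ has inward normal $\mathbf n_{\lambda^{(k)}}$, and convexity gives $\mathbf n_{\lambda^{(k)}}\cdot(\lambda-\lambda^{(k)})\ge 0$ for all $\lambda\in\overline{\Gamma^\sigma}$, in particular for $\lambda=\mu-2\delta\mathbf 1+t e_j$ whenever this lies in $\overline{\Gamma^\sigma}$ for large $t$ (which it does, as $\mu-2\delta\mathbf 1+\Gamma_n$ meets $\d\Gamma^\sigma$ in a bounded set, hence $\mu-2\delta\mathbf 1+t e_j\in\Gamma^\sigma$ for $t$ large). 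Dividing by $t$ and letting $t\to\infty$ shows $(\mathbf n_{\lambda^{(k)}})_j\ge 0$ — already known — but the more refined consequence, using that the intersection with $B_R(0)$ is bounded \emph{and} using $\delta$, is a quantitative lower bound: the normals on the relevant portion of $\d\Gamma^\sigma$ cannot be too degenerate. Concretely, since $(\mu-2\delta\mathbf 1+\Gamma_n)\cap\d\Gamma^\sigma\subset B_R(0)$, for any $\lambda\in\d\Gamma^\sigma$ with $|\lambda|>R$ the point $\mu-2\delta\mathbf 1$ satisfies $\lambda-(\mu-2\delta\mathbf 1)\notin\Gamma_n$, yet $\mathbf n_\lambda\cdot(\lambda-(\mu-2\delta\mathbf 1))\ge 0$ by convexity; combining with the target inequalities should force a contradiction.

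I would make this precise as follows. By convexity of $\Gamma^\sigma$ and $\lambda\in\d\Gamma^\sigma$,
\[
\sum_i f_i(\lambda)\big(\lambda_i-\mu_i+2\delta\big)\ \ge\ 0,
\]
i.e. $\sum_i f_i(\lambda)(\mu_i-\lambda_i)\le 2\delta\sum_i f_i(\lambda)=2\delta\,\mathcal F(\lambda)$ — so \eqref{eq:s3} is the assertion that we can improve $2\delta$ to a negative multiple $-\kappa$, which is false in general; hence the dichotomy genuinely needs the second alternative. So instead I use the \emph{stronger} containment with $2\delta$: the point $\mu-2\delta\mathbf 1$ being "$2\delta$ inside" means that $\lambda-(\mu-\delta\mathbf 1)\notin\Gamma_n$ already for $|\lambda|>R$, and then for each $\lambda$ with $|\lambda|>R$ there is an index $j=j(\lambda)$ with $\lambda_j-\mu_j+\delta\le 0$, i.e. $\mu_j-\lambda_j\ge\delta$. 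Now split: if $f_j(\lambda)\ge\kappa\mathcal F(\lambda)$ for \emph{all} $j$ we are in the second case; otherwise some $f_{j_0}(\lambda)<\kappa\mathcal F(\lambda)$. The delicate part — and the main obstacle — is to rule out the bad scenario where the index $j$ with $\mu_j-\lambda_j\ge\delta$ is \emph{exactly} an index where $f_j$ is tiny, so that $f_j(\lambda)(\mu_j-\lambda_j)$ contributes nothing to \eqref{eq:s3}. Here is where the compactness of the normal directions on $(\mu-2\delta\mathbf 1+\Gamma_n)\cap\d\Gamma^\sigma$ enters: along a contradiction sequence $\lambda^{(k)}$ failing both alternatives, the normalized normals converge to some $\mathbf n_\infty$ with at least one zero entry (say entry $i_0$) and with $\mathbf n_\infty\cdot(\mu-\lambda^{(k)})/\mathcal F\to \ell\le 0$; but $\mathbf n_\infty$ is a limit of inward normals to the convex set $\Gamma^\sigma$ at points escaping to infinity in direction $v_\infty$ (a recession direction containing $\Gamma_n$), which forces $\mathbf n_\infty\cdot v_\infty\le 0$; combined with $\mathbf n_\infty$ having nonnegative entries and $v_\infty\in\overline{\Gamma_n}+(\text{recession cone})$, a short argument pins down that $\mathbf n_\infty\cdot(\mu-2\delta\mathbf 1+\Gamma_n)$ is bounded above, contradicting that $(\mu-2\delta\mathbf 1+\Gamma_n)\cap\d\Gamma^\sigma$ is bounded (which says precisely that $\d\Gamma^\sigma$ \emph{does} escape in those directions). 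Tracking the constant $\kappa$ through this compactness argument — it depends only on $\delta$ and on the closed bounded set of normal vectors over the region in \eqref{eq:s1} — gives the explicit dependence claimed, and I expect the book-keeping between the index $j(\lambda)$ and the small-$f_i$ indices to be the part requiring genuine care rather than routine estimation.
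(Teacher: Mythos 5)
There is a genuine gap, and also a wrong step that your structural discussion leans on. Both places where you invoke convexity are in the wrong direction: convexity of $\Gamma^\sigma$ only gives $\mathbf{n}_\lambda\cdot(w-\lambda)\ge 0$ for points $w\in\overline{\Gamma^\sigma}$, and $\mu-2\delta\mathbf{1}$ is in general \emph{not} in $\overline{\Gamma^\sigma}$. So neither ``$\mathbf{n}_\lambda\cdot(\lambda-(\mu-2\delta\mathbf{1}))\ge 0$'' nor the displayed inequality $\sum_i f_i(\lambda)(\lambda_i-\mu_i+2\delta)\ge 0$ follows, and in fact both are false: take $n=2$, $f=\log(\lambda_1\lambda_2)$, $\sigma=0$, $\mu=(3,3)$, $\lambda=(t,1/t)$ with $t$ large; then \eqref{eq:s1} holds (the intersection is empty) while $\sum_i f_i(\lambda)(\lambda_i-\mu_i+2\delta)\approx-(3-2\delta)t\to-\infty$. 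Consequently the conclusion you draw from it about when \eqref{eq:s3} can hold is unsupported. Separately, your justification that $\mu-2\delta\mathbf{1}+te_j\in\Gamma^\sigma$ for large $t$ is a non sequitur as written: boundedness of the intersection with $\d\Gamma^\sigma$ does not by itself put the ray (which lies on the boundary of the orthant and a priori need not even lie in $\Gamma$) into $\Gamma^\sigma$; one needs a short extra argument (monotonicity of $f$ in each variable, nonemptiness of $\Gamma^\sigma$, and connectedness of the orthant outside $B_R(0)$, or the discussion in Remark~\ref{rem:subsol2}).

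The more serious problem is that the contradiction at the end is never actually derived. The facts you collect about the limit ($\mathbf{n}_\infty\ge 0$ entrywise, one entry zero, $\mathbf{n}_\infty\cdot v_\infty\le 0$, $\limsup\mathbf{n}_{\lambda^{(k)}}\cdot(\mu-\lambda^{(k)})\le 0$) are mutually consistent and do not yet contradict \eqref{eq:s1}; the sentence ``a short argument pins down that $\mathbf{n}_\infty\cdot(\mu-2\delta\mathbf{1}+\Gamma_n)$ is bounded above, contradicting boundedness'' is precisely the missing step, and a linear functional being bounded above on the orthant is not in itself inconsistent with the boundedness of the intersection. The step that closes the argument — which you had in hand in your second paragraph but discarded by dividing by $t$ and letting $t\to\infty$ — is to keep $t$ fixed: for each $j$ pick $t_j$ with $w_j=\mu-\delta\mathbf{1}+t_je_j\in\overline{\Gamma^\sigma}$ (possible by the argument above), and combine the supporting-plane inequality $\mathbf{n}_\lambda\cdot(w_j-\lambda)\ge 0$ with the negation of \eqref{eq:s3}, which by \eqref{eq:s2} reads $\mathbf{n}_\lambda\cdot(\mu-\lambda)\le\kappa\sqrt{n}$; this gives $t_j(\mathbf{n}_\lambda)_j\ge\delta\Vert\mathbf{n}_\lambda\Vert_1-\kappa\sqrt{n}\ge\delta-\kappa\sqrt{n}$, hence $f_j(\lambda)\ge c(\delta,t_j)\,\mathcal{F}(\lambda)$ for \emph{every} $j$, i.e.\ the second alternative — a direct proof, with no contradiction sequence and no recession-cone analysis needed. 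As written, your proposal stops short of this (and explicitly concedes the bookkeeping is the hard part), so it is incomplete. Note also that the paper argues differently and directly: it introduces the compact set $A_\delta=\{v\in\Gamma: f(v)\le\sigma,\ v-\mu-\delta\mathbf{1}\in\overline{\Gamma_n}\}$ and splits according to whether the tangent plane at $\lambda$ meets $A_\delta$ (if it does, $\mathbf{n}_\lambda$ lies in the dual of a cone strictly larger than $\Gamma_n$, so all its entries are bounded below; if not, $\mu$ is at distance $\ge\delta$ from the tangent plane, which is \eqref{eq:s3}); this also makes the stated dependence of $\kappa$ on $\delta$ and on the normal vectors over the set in \eqref{eq:s1} transparent, which a soft compactness-and-contradiction argument does not automatically deliver.
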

\begin{proof}
  Consider the set
  \[ A_\delta = \{ v\in \Gamma\,:\, f(v) \leq \sigma,\text{ and } v -
  \mu - \delta\mathbf{1}\in \overline{\Gamma_n}\}. \]
  Because of $\eqref{eq:s1}$ this is a compact set. For each $v \in A_\delta$
  consider the cone $\mathcal{C}_v$ with vertex at the origin defined
  by
  \[ \mathcal{C}_v = \{ w\in \mathbf{R}^n\,:\, v + tw\in (\mu -
  2\delta\mathbf{1} + \Gamma_n)\cap \d\Gamma^\sigma\,\text{ for some }
  t > 0\}. \]
  In other words the cone $v + \mathcal{C}_v$ has vertex $v$ and cross
  section $(\mu - 2\delta\mathbf{1} + \Gamma_n)\cap
  \d\Gamma^\sigma$. Since $f_i > 0$ for all $i$, the set $(\mu -
  2\delta\mathbf{1} + \Gamma_n)\cap \d\Gamma^\sigma$ is strictly larger than $(\mu -
  \delta\mathbf{1} + \Gamma_n)\cap \d\Gamma^\sigma$, i.e.
  \[ \overline{(\mu -
  \delta\mathbf{1} + \Gamma_n)\cap \d\Gamma^\sigma} \subset (\mu -
2\delta\mathbf{1} + \Gamma_n)\cap \d\Gamma^\sigma. \]
  This implies that the cone $\mathcal{C}_v$ is strictly larger than
  $\Gamma_n$.   Let us
  denote by $\mathcal{C}_v^*$ the dual cone, i.e.
  \[ \mathcal{C}_v^* = \{x\in \mathbf{R}^n\,:\, \langle x,y\rangle >
  0\,\text{ for all }y\in \mathcal{C}_v\}. \]
  Being strictly larger than $\Gamma_n$ means that there is an 
  $\epsilon > 0$ such that if $x\in \mathcal{C}_v^*$ is a unit vector,
  then each entry of $x$ satisfies $x_i > \epsilon$. Since $A_\delta$
  is compact, we can choose a uniform $\epsilon$ that works for all
  $v\in A_\delta$. 

  Suppose that $\lambda\in \d\Gamma^\sigma$, and $|\lambda| > R$. Let
  $T_\lambda$ be the tangent plane to $\d\Gamma^\sigma$ at
  $\lambda$. There are two possibilities:
  \begin{itemize}
  \item If $T_\lambda$ intersects $A_\delta$, in a point $v$, say,
    then the cone $v + \mathcal{C}_v$ lies above
    $T_\lambda$ (i.e. $\Gamma^\sigma$ lies on the same side of
    $T_\lambda$  as $v + \mathcal{C}_v$). This
    implies that the normal vector of $T_\lambda$ is in the dual cone,
    i.e. $\mathbf{n}_\lambda \in \mathcal{C}_v^*$. But then each entry
    of $\mathbf{n}_\lambda$ is greater than $\epsilon$, i.e. $f_i >
    \epsilon |\nabla f|$ for each $i$. Because of \eqref{eq:s2} this implies
    \[ f_i > \frac{\epsilon}{\sqrt{n}}\mathcal{F} \]
    for each $i$. 
  \item If $T_\lambda$ does not intersect $A_\delta$, then $\mu$
    must be of distance at least $\delta$ from $T_\lambda$. This means
    that $(\mu - \lambda)\cdot \mathbf{n}_\lambda > \delta$. Writing
    this out in components, we have
    \[ \sum_{i=1}^n f_i(\lambda)\,(\mu_i - \lambda_i) > \delta |\nabla
    f(\lambda)|, \]
    which by \eqref{eq:s2} implies \eqref{eq:s3}. 
  \end{itemize}
\end{proof}

We need to apply this to the function $F$ defined on the space of Hermitian
matrices $A$ by $F(A) = f(\lambda(A))$, where
\[ \lambda(A) = (\lambda_1,\ldots, \lambda_n) \]
denotes the eigenvalues of $A$. Let us write $F^{ij}$ for the
derivative of $F$ with respect to the $ij$-entry of $A$. Then
similarly to Guan~\cite[Theorem 2.18]{Guan14},  we have
the following. 

\begin{prop}\label{prop:subsol2}
  Let $[a,b]\subset (\sup_{\d\Gamma}f, \sup_\Gamma f)$ and $\delta, R >
  0$. There exists $\kappa > 0$ with the following property. Suppose
  that $\sigma\in [a,b]$ and $B$ is a Hermitian matrix such that
   \[ \label{eq:subsol10}
 (\lambda(B) - 2\delta\mathbf{1} + \Gamma_n)\cap \d\Gamma^\sigma
   \subset B_R(0). \]
  Then for any Hermitian matrix
  $A$ with $\lambda(A)\in \d\Gamma^\sigma$ and $|\lambda(A)| > R$ 
  we either have 
  \[ \label{eq:subsol}
     \sum_{p,q} F^{pq}(A)\,\big[ B_{pq} - A_{pq}\big] > \kappa\sum_p F^{pp}(A), \]
  or $F^{ii}(A) > \kappa\sum_p F^{pp}(A)$ for all $i$. 
\end{prop}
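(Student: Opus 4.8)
The goal is to transfer Proposition~\ref{prop:subsol1}, which is a statement about the symmetric function $f$ on the diagonal $\mathbf{R}^n$, to the matrix-valued function $F$ on Hermitian matrices. The plan is to reduce to the diagonal case by a unitary change of basis, using the standard facts that $F(A) = f(\lambda(A))$ is invariant under $A \mapsto U^*AU$, and that at a point where $A$ is diagonal with entries $\lambda_1,\ldots,\lambda_n$, we have $F^{ii}(A) = f_i(\lambda)$ and $F^{ij}(A) = 0$ for $i \ne j$.

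First I would observe that $\lambda(B) - 2\delta\mathbf{1} \in \Gamma$ may be assumed (otherwise the hypothesis set in \eqref{eq:subsol10} is empty and we should instead run the compactness argument directly, but in fact the argument below handles this uniformly), and apply Proposition~\ref{prop:subsol1} with $\mu = \lambda(B)$: this produces, for each $\sigma \in [a,b]$, a constant $\kappa(\sigma) > 0$ depending only on $\delta$ and on the normal vectors of $\d\Gamma^\sigma$ over the set in \eqref{eq:s1}. Next I would diagonalize: given $A$ with $\lambda(A) \in \d\Gamma^\sigma$ and $|\lambda(A)| > R$, choose a unitary $U$ so that $U^*AU = \mathrm{diag}(\lambda_1,\ldots,\lambda_n)$ with $\lambda = \lambda(A)$. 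By unitary invariance of $F$ and the transformation rule for its derivative (conjugating the matrix of first derivatives by $U$), the quantity $\sum_{p,q} F^{pq}(A)[B_{pq} - A_{pq}]$ equals $\sum_{p,q} F^{pq}(U^*AU)[\tilde B_{pq} - \lambda_p \delta_{pq}]$ where $\tilde B = U^*BU$; since $F^{pq}$ at a diagonal matrix is diagonal with $F^{pp} = f_p(\lambda)$, this collapses to $\sum_i f_i(\lambda)(\tilde B_{ii} - \lambda_i)$, and likewise $\sum_p F^{pp}(A) = \mathcal{F}(\lambda)$.

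The key remaining point is to replace the diagonal entries $\tilde B_{ii}$ of $U^*BU$ by the eigenvalues $\mu_i = \lambda_i(B)$. Since $f_i(\lambda) > 0$ for all $i$, and since by a standard majorization fact (the diagonal of $U^*BU$ is majorized by $\lambda(B)$, so the vector of diagonal entries lies in the convex hull of the permutations of $\mu$), we have $\sum_i f_i(\lambda)\tilde B_{ii} \le \max_{\tau} \sum_i f_i(\lambda)\mu_{\tau(i)}$ over permutations $\tau$; more simply, one checks directly that $\sum_i f_i(\lambda)(\tilde B_{ii} - \lambda_i) \le \sum_i f_i(\lambda)(\mu_{\tau(i)} - \lambda_i)$ for a suitable reordering, so that the conclusion \eqref{eq:s3} of Proposition~\ref{prop:subsol1} applied to the $n$-tuple $(\mu_{\tau(1)}, \ldots, \mu_{\tau(n)})$ — which is again just $\mu$ up to the symmetry of the whole setup — gives the desired bound. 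Finally, to get a single $\kappa$ independent of $\sigma \in [a,b]$ rather than $\kappa(\sigma)$, I would use that $[a,b]$ is compact and that the normal vectors of $\d\Gamma^\sigma$ over the relevant sets vary continuously (indeed $\Gamma^\sigma$ decreases as $\sigma$ increases, and one can enlarge all the sets in \eqref{eq:s1} to a single compact set by using $\sigma = a$), so $\kappa = \inf_{\sigma \in [a,b]} \kappa(\sigma) > 0$.

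The main obstacle I anticipate is precisely the passage from the diagonal entries of $U^*BU$ to the eigenvalues of $B$ — it is the one place where we genuinely use that $F$ is defined on all of Hermitian matrix space rather than just on diagonal ones, and getting the inequality in the right direction relies crucially on the positivity $f_i > 0$. Everything else — unitary invariance, the derivative formulas at a diagonal matrix, and the compactness argument in $\sigma$ — is routine, and the proof should follow the outline of \cite[Theorem 2.18]{Guan14} closely, with the improvement being only the sharper dichotomy inherited from Proposition~\ref{prop:subsol1}.
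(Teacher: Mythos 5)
Your overall route is the same as the paper's: diagonalize $A$ by unitary invariance, use $F^{ii}(A)=f_i(\lambda)$ at a diagonal matrix, pass from the diagonal entries of $U^*BU$ to the eigenvalues of $B$ via Schur--Horn, apply Proposition~\ref{prop:subsol1}, and get uniformity in $\sigma\in[a,b]$ by compactness. However, the key inequality in your argument is stated in the wrong direction, and as written the proof does not close. Proposition~\ref{prop:subsol1} gives the \emph{lower} bound $\sum_i f_i(\lambda)(\mu_i-\lambda_i)>\kappa\mathcal{F}(\lambda)$, so to deduce \eqref{eq:subsol} you need a lower bound of the form $\sum_i f_i(\lambda)\tilde B_{ii}\ \geq\ \sum_i f_i(\lambda)\mu_{\tau(i)}$ for some permutation $\tau$. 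What you assert instead, namely $\sum_i f_i(\lambda)\tilde B_{ii}\leq\max_\tau\sum_i f_i(\lambda)\mu_{\tau(i)}$ and $\sum_i f_i(\lambda)(\tilde B_{ii}-\lambda_i)\leq\sum_i f_i(\lambda)(\mu_{\tau(i)}-\lambda_i)$, is an upper bound on the quantity you must bound from below; knowing that a majorant exceeds $\kappa\mathcal{F}$ tells you nothing about $\sum_{p,q}F^{pq}(A)[B_{pq}-A_{pq}]$ itself.

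The repair is exactly the paper's step. Since $(\tilde B_{11},\dots,\tilde B_{nn})$ lies in the convex hull of the permutations of $\mu=\lambda(B)$ and $x\mapsto\sum_i f_i(\lambda)x_i$ is linear, its value is at least the minimum over the vertices of that permutohedron, i.e. $\sum_i f_i(\lambda)\tilde B_{ii}\geq\min_\tau\sum_i f_i(\lambda)\mu_{\tau(i)}$; by the rearrangement inequality the minimizing vertex is the oppositely ordered pairing (order $\lambda_1\geq\dots\geq\lambda_n$, so $f_1\leq\dots\leq f_n$ by concavity and symmetry of $f$, and $\mu_1\geq\dots\geq\mu_n$, giving $\sum_i f_i(\lambda)\tilde B_{ii}\geq\sum_i f_i(\lambda)\mu_i$). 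One then applies Proposition~\ref{prop:subsol1} to that ordering of $\mu$, which is legitimate since the hypothesis \eqref{eq:subsol10} is permutation invariant, and either \eqref{eq:s3} transfers to \eqref{eq:subsol} or the alternative $f_i>\kappa\mathcal{F}$ holds for all $i$. Note also that the positivity $f_i>0$ is not what drives this step, contrary to your closing remark: linearity plus minimization over the vertices works for coefficients of any sign, while $f_i>0$ is used inside Proposition~\ref{prop:subsol1}. Your compactness argument for a single $\kappa$ over $\sigma\in[a,b]$ agrees with the paper's.
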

\begin{proof}
The proof is essentially the same as that of \cite[Theorem
2.18]{Guan14}, but we give some details for the reader's
convenience. Suppose that $A$ is diagonal, and its eigenvalues satisfy
$\lambda_1 \geq \lambda_2 \geq \ldots \geq \lambda_n$. This implies
that $F^{pq} = 0$ if $p\not=q$, and that $F^{11} \leq F^{22}
\leq\ldots \leq F^{nn}$. Let $\mu_1,\ldots, \mu_n$ be the eigenvalues
of $B$ ordered so that $\mu_1 \geq \mu_2 \geq\ldots \geq \mu_n$. 
The matrix $B$ may not be
diagonal, but the Schur-Horn theorem implies that the $n$-tuple of
diagonal entries $(B_{11},\ldots, B_{nn})$ is in the convex hull of
the vectors obtained by permuting the entries of $(\mu_1,\ldots,
\mu_n)$. In particular it follows that
\[ \sum_i F^{ii}(A) B_{ii} \geq F^{ii}(A) \mu_i. \]
Since $A$ is diagonal, we have $F^{ii} = f_i(\lambda)$, and
\eqref{eq:subsol10} implies that we can apply
Proposition~\ref{prop:subsol1} to obtain the required inequalities. 
We obtain uniform $\kappa > 0$, since the assumptions on $\sigma$
implies that the sets $(\lambda(B) - 2\delta\mathbf{1} + \Gamma_n)\cap
\d\Gamma^\sigma$ move in a compact family.
\end{proof}

We recall the definition of a $\mathcal{C}$-subsolution from the introduction. 
\begin{defn}
   Suppose, as in the introduction that $(M,\alpha)$ is Hermitian and
   $\chi$ is a real $(1,1)$-form. We say that $\underline{u}$ is a
   $\mathcal{C}$-subsolution for the equation $F(A) = h$, if at each
   $x\in M$ the set
   \[ \left(\lambda\big[\alpha^{j\bar p}(\chi_{i\bar p} + \underline{u}_{i\bar p})\big] +
   \Gamma_n\right)\cap \d\Gamma^{h(x)} \]
   is bounded. Let us also say that $\underline{u}$ is admissible, if
   $\lambda\big[\alpha^{j\bar p}(\chi_{i\bar p} + \underline{u}_{i\bar
     p})\big] \in \Gamma$. An observation pointed out to the author by
   Wei Sun is that a $\mathcal{C}$-subsolution need not be
   admissible. 
\end{defn}

\begin{remark}\label{rem:subsol2}
In examples it is useful to have an alternative description of the set
of $\mathcal{C}$-subsolutions. Following
Trudinger~\cite{Tru95}, let us denote by $\Gamma_\infty$ the
projection of $\Gamma$ onto $\mathbf{R}^{n-1}$:
\[ \Gamma_\infty = \{(\lambda_1,\ldots, \lambda_{n-1})\,:\,
(\lambda_1,\ldots, \lambda_n)\in \Gamma\,\text{ for some }
\lambda_n\}.\]
For $\mu\in \mathbf{R}^n$, the set $(\mu + \Gamma_n)\cap \d\Gamma^\sigma$ is
bounded, if and only if
\[  \lim_{t\to\infty} f(\mu + t\mathbf{e}_i) > \sigma \]
for all $i$, where $\mathbf{e}_i$ is the $i^{\text{th}}$ standard
basis vector. Note that this limit is defined as long as any
$(n-1)$-tuple $\mu'$ in $\mu$ satisfies $\mu'\in \Gamma_\infty$. In
other words it is defined for $\mu\in \widetilde{\Gamma}$, where
$\widetilde{\Gamma}\subset\mathbf{R}^n$ is given by
\[ \widetilde{\Gamma} = \{ \mu\in\mathbf{R}^n\,:\, \text{ there exists
}t > 0\text{ such that } \mu + t\mathbf{e}_i\in\Gamma\text{ for all
}i\}. \]
Note that $\Gamma \subset \widetilde{\Gamma}$. 

For any $\lambda' = (\lambda_1,\ldots, \lambda_{n-1})\in \Gamma_\infty$, consider the limit
\[ \lim_{\lambda_n\to\infty} f(\lambda_1,\ldots, \lambda_n). \]
Then as in \cite{Tru95} this limit is either finite for all $\lambda'$
or infinite for all $\lambda'$ because of the concavity of $f$. If the
limit is infinite, then  $(\mu + \Gamma_n)\cap \d\Gamma^\sigma$ is
bounded for all $\sigma$ and $\mu\in \widetilde{\Gamma}$. In
particular any admissible $\underline{u}$ is a
$\mathcal{C}$-subsolution in this case. 

If the limit is finite, define the
function $f_\infty$ on $\Gamma_\infty$ by  
\[ f_\infty(\lambda_1,\ldots,\lambda_{n-1}) =
\lim_{\lambda_n\to\infty} f(\lambda_1,\ldots,\lambda_n).  \]
From the above it is clear that $(\mu + \Gamma_n)\cap \d\Gamma^\sigma$
for $\mu\in\widetilde{\Gamma}$ 
is bounded if and only if $f_\infty(\mu') > \sigma$, where $\mu'\in
\Gamma_\infty$ denotes any $(n-1)$-tuple of entries of $\mu$.
\end{remark}

We will need the following consequences of our
structural assumptions for $f$.

\begin{lem}\label{lem:struct}
  Under the assumptions (i), (ii), (iii) for $f$ in the introduction,
  we have the following, for any $\sigma\in (\sup_{\d\Gamma} f,
  \sup_\Gamma f)$: 
  \begin{enumerate}
  \item[(a)] There is an $N > 0$ depending on $\sigma$, such that
    $\Gamma + N\mathbf{1} \subset \Gamma^\sigma$,
 \item[(b)] there is a $\tau  >0$, depending on $\sigma$, such that
    $\mathcal{F}(\lambda) > \tau$ for any $\lambda\in\d\Gamma^\sigma$.
  \end{enumerate}
\end{lem}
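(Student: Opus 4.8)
The plan is to deduce both statements directly from the hypotheses (i), (iii) on $f$ together with the definition of $\sup_\Gamma f$ and $\sup_{\d\Gamma} f$. For part (a), fix $\sigma \in (\sup_{\d\Gamma} f, \sup_\Gamma f)$. Since $\sigma < \sup_\Gamma f$, there is some $\lambda^0 \in \Gamma$ with $f(\lambda^0) > \sigma$; by monotonicity ($f_i > 0$) we may in fact assume $\lambda^0 = t_0 \mathbf{1}$ for $t_0$ large, using assumption (iii) applied to, say, $\lambda = \mathbf{1} \in \Gamma_n \subset \Gamma$. Now let $\lambda \in \Gamma$ be arbitrary. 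Since $\Gamma \subset \widetilde{\Gamma}$ and $\Gamma$ is an open convex cone containing $\Gamma_n$, for $N$ large enough we have $\lambda + N\mathbf{1} \in \Gamma$; moreover $\lambda + N\mathbf{1} \succeq \lambda^0$ componentwise once $N$ is large, and by $f_i > 0$ this gives $f(\lambda + N\mathbf{1}) \geq f(\lambda^0) > \sigma$, i.e. $\lambda + N\mathbf{1} \in \Gamma^\sigma$. The point requiring care is that the same $N$ must work for all $\lambda \in \Gamma$ simultaneously; this is where one must use assumption (iii) and concavity to control how fast $f$ grows along the $\mathbf{1}$-direction uniformly in the base point — in effect, using that $f$ is concave and increasing, the function $t \mapsto f(\lambda + t\mathbf{1})$ has a limit at $\infty$ that exceeds $\sigma$ (by (iii), since $\sigma < \sup_\Gamma f$), and concavity forces this threshold to be crossed by a time $N$ that is uniformly bounded over $\lambda$ lying on, say, the compact cross-section $\d\Gamma^\sigma$ — the general case then follows since moving $\lambda$ further into $\Gamma^\sigma$ only helps.

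For part (b), I argue by contradiction using compactness and assumption (ii). Suppose there is a sequence $\lambda^{(j)} \in \d\Gamma^\sigma$ with $\mathcal{F}(\lambda^{(j)}) \to 0$. By \eqref{eq:s2} this forces $|\nabla f(\lambda^{(j)})| \to 0$. There are two cases. If the sequence $\lambda^{(j)}$ is bounded, pass to a subsequence converging to some $\lambda^\infty$. If $\lambda^\infty \in \Gamma$, then $f$ is smooth near $\lambda^\infty$ with $f(\lambda^\infty) = \sigma$ and $\nabla f(\lambda^\infty) = 0$, contradicting $f_i > 0$. If instead $\lambda^\infty \in \d\Gamma$, then by assumption (ii) we have $f(\lambda^\infty) \le \sup_{\d\Gamma} f < \sigma$ (using the $\limsup$ definition of $\sup_{\d\Gamma}$), contradicting $f(\lambda^{(j)}) = \sigma$ by continuity. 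If the sequence is unbounded, pass to a subsequence with $|\lambda^{(j)}| \to \infty$; here one uses concavity of $f$: for a concave increasing function, $|\nabla f|$ cannot tend to $0$ along a level set going to infinity without $f$ failing to increase in some direction, more precisely one shows a positive lower bound on $\sum_i f_i$ persists because the gradients $\nabla f(\lambda^{(j)})/|\nabla f(\lambda^{(j)})|$ subconverge to a unit vector with nonnegative entries, and concavity together with $f(\lambda^{(j)}) = \sigma$ pins the normal direction into a compact set bounded away from the degenerate configurations.

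The main obstacle I anticipate is the uniformity over the unbounded part of $\d\Gamma^\sigma$ in part (b): $\d\Gamma^\sigma$ is generally noncompact (e.g. for the complex Monge–Ampère equation it is asymptotic to the coordinate hyperplanes), so one cannot simply invoke compactness. The resolution is to exploit convexity of $\Gamma^\sigma$ and concavity of $f$: the inward normals $\mathbf{n}_\lambda$ at points of $\d\Gamma^\sigma$ all have nonnegative entries and, crucially, lie in a fixed compact subset of the open positive orthant's closure determined by the recession cone of $\Gamma^\sigma$, which by assumption (iii) is strictly larger than a translate of $\Gamma_n$ — so the normals stay bounded away from directions that would force $\mathcal{F} \to 0$. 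An alternative, cleaner route for (b): since $\Gamma^\sigma \supset \Gamma + N\mathbf{1}$ by part (a) and $\Gamma \supset \Gamma_n$, the set $\Gamma^\sigma$ contains the ray $\lambda + t\mathbf{1}$ for $t \geq 0$ from any $\lambda \in \d\Gamma^\sigma$, hence $\nabla f(\lambda) \cdot \mathbf{1} = \sum_i f_i(\lambda) = \mathcal{F}(\lambda) > 0$ is the derivative of $t \mapsto f(\lambda + t\mathbf{1})$ at $t = 0$; by concavity this derivative is at least $f(\lambda + \mathbf{1}) - f(\lambda) \geq f(\lambda^0) - \sigma =: \tau > 0$ once $N \geq 1$ suffices in (a), giving the bound with no compactness argument needed at all. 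I would present this last argument as the actual proof.
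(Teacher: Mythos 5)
Your part (b), in the ``cleaner route'' you say you would actually present, is essentially the paper's own argument: concavity along the $\mathbf{1}$-direction gives $\mathcal{F}(\lambda)\geq N^{-1}\big(f(\lambda+N\mathbf{1})-f(\lambda)\big)$, and part (a) applied at a strictly higher level $\sigma'\in(\sigma,\sup_\Gamma f)$ gives $f(\lambda+N\mathbf{1})>\sigma'$, hence $\mathcal{F}(\lambda)\geq N^{-1}(\sigma'-\sigma)$. Two corrections there: you cannot take the step size to be $1$ (``once $N\geq 1$ suffices'') --- nothing guarantees the $N$ furnished by (a) is $\leq 1$, so you must keep the factor $N^{-1}$; and applying (a) at the level $\sigma$ itself would only yield $\mathcal{F}\geq 0$, so the strictly higher level (your $f(\lambda^0)$) is essential. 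Your first, compactness-based argument for (b) should be discarded: $\d\Gamma^\sigma$ is noncompact, and the assertion that the normals are ``pinned away from degenerate configurations'' does not by itself prevent $|\nabla f|\to 0$ along the level set.

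The genuine gap is in part (a), on which your (b) rests. For $\lambda$ with $f(\lambda)\geq\sigma$ the statement is trivial for every $N>0$ by monotonicity; the whole content of (a) concerns $\lambda\in\Gamma\setminus\overline{\Gamma^\sigma}$, e.g.\ points arbitrarily close to $\d\Gamma$ where $f(\lambda)$ is close to $\sup_{\d\Gamma}f$ or very negative. Your proposed resolution asserts the crossing time of $t\mapsto f(\lambda+t\mathbf{1})$ past $\sigma$ is uniformly bounded ``over the compact cross-section $\d\Gamma^\sigma$'' --- but $\d\Gamma^\sigma$ is not compact (as you yourself note in (b)), and the reduction ``moving $\lambda$ further into $\Gamma^\sigma$ only helps'' covers only the trivial case $\lambda\in\overline{\Gamma^\sigma}$, not the region where the claim has content. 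Concavity alone gives no uniform crossing time: it only says the derivative of $t\mapsto f(\lambda+t\mathbf{1})$ is non-increasing, and a quantitative lower bound on that derivative is exactly part (b), so this route is circular. The paper obtains the uniform $N$ by a different device: since $\overline{\Gamma^\sigma}$ is convex and permutation-symmetric, its closest point to the origin is $N\mathbf{1}$ for some $N>0$; then for any $\lambda\in\Gamma$, assumption (iii) gives $T>1$ with $T\lambda\in\Gamma^\sigma$, monotonicity gives $N\mathbf{1}+T\lambda\in\Gamma^\sigma$, and the convex combination $N\mathbf{1}+\lambda=\tfrac{1}{T}\big(N\mathbf{1}+T\lambda\big)+\big(1-\tfrac{1}{T}\big)N\mathbf{1}$ lies in the open convex set $\Gamma^\sigma$. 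You need an argument of this kind (or a substitute) to close (a); as written, your proof of (a), and hence of (b), is incomplete.
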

\begin{proof}
  To prove (a), let $x\in \d\Gamma^\sigma$ be the closest point to the
  origin. By the convexity of $\Gamma^\sigma$ and symmetry under
  permuting the variables, we must have $x = N\mathbf{1}$ for some $N
  > 0$. We claim that $\Gamma + N\mathbf{1} \subset
  \Gamma^\sigma$. Indeed for any $\lambda\in \Gamma$,
  assumption (iii) implies that there is some $T > 1$, such that
  $T\lambda\in \Gamma^\sigma$. The convexity of
  $\Gamma^\sigma$ implies that then $x + t\lambda\in \Gamma^\sigma$
  for all $t \in (0,T]$, and so in particular $x + \lambda\in
  \Gamma^\sigma$. This proves (a).

   To prove (b), first choose $\sigma' > \sigma$ such that
   $\sigma'\in(\sup_{\d\Gamma} f,\sup_\Gamma f)$ as well.
   Part (a) implies that if $f(\lambda) = \sigma$, then $f(\lambda + N\mathbf{1})
   > \sigma'$. By concavity we have
   \[ f(\lambda + N\mathbf{1}) \leq f(\lambda) + N\sum_{i=1}^n
   f_i(\lambda), \]
   which implies $\mathcal{F}(\lambda) \geq N^{-1}(\sigma' -
   \sigma)$, which is the bound that we wanted. 
\end{proof}

\section{$C^0$-estimates}\label{sec:C0}
In this section we prove a priori $C^0$-estimates for solutions of
Equation~\eqref{eq:eqn}.

\begin{prop}\label{prop:C0}
  Suppose that $F(A) = h$, where $A^{ij} = \alpha^{j\bar
  p}g_{i\bar p}$ and $g = \chi + \ddbar u$ for a fixed background form
$\chi$, as in the introduction. Assume that we have a $\mathcal{C}$-subsolution
$\underline{u}$, and normalize $u$ so that $\sup u -
\underline{u} = 0$. There is a constant $C$, depending on the given
data, including $\underline{u}$ such that
\[ \sup_M |u| < C. \]
\end{prop}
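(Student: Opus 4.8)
The plan is to follow Blocki's Alexandroff-Bakelman-Pucci (ABP) approach to the complex Monge-Amp\`ere equation, but to exploit the $\mathcal{C}$-subsolution via Proposition~\ref{prop:subsol2} at the point where Blocki uses plurisubharmonicity. Set $v = u - \underline{u}$, so that by the normalization $\sup_M v = 0$, and we must bound $\inf_M v$ from below. Let $x_0\in M$ be a point where $v$ attains its minimum, and work in a coordinate ball $B$ of fixed radius around $x_0$ in which $\alpha$ is comparable to the Euclidean metric; after subtracting a linear function in these coordinates we may assume the Euclidean gradient of $v$ vanishes at $x_0$. Apply the real ABP estimate on $B$ to the function $v$: there is a lower-exponential-type bound
\[ \label{eq:abpplan} c_0 \leq \int_P \det(D^2 v)\,dx, \]
where $P\subset B$ is the contact set on which $v(x) + \tfrac12|x-y|^2/R^2$ has a minimum in $x$ for $y$ ranging over a small ball, and $c_0 > 0$ depends only on $\mathrm{osc}_B\, v$ and the radius. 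On the contact set $D^2 v \geq 0$, so the eigenvalues of the complex Hessian $(\d_i\d_{\bar j} v)$ are nonnegative there, and in particular $g = \chi + \ddbar u \geq \chi + \ddbar\underline{u} =: \chi'$ at such points in the sense of the ordering of the associated Hermitian endomorphisms $A \geq B$.

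The main step is to turn the algebraic inequality $A \geq B$ (equivalently $\lambda(A) - \lambda(B) \in \overline{\Gamma_n}$, after simultaneous diagonalization) together with $F(A) = h$ into an upper bound for $\det(D^2 v)$ on the contact set. Here is where the $\mathcal{C}$-subsolution hypothesis enters: by Definition~\ref{defn:subsol} the set $(\lambda(B(x)) + \Gamma_n)\cap \d\Gamma^{h(x)}$ is bounded, uniformly in $x$ by compactness of $M$; hence the conclusion of Proposition~\ref{prop:subsol2} (with $\mu = \lambda(B)$, for a suitable fixed $R$ and small $\delta$) applies once $|\lambda(A)|$ is large. In the first alternative of that Proposition, $\sum F^{pq}(A)(B_{pq} - A_{pq}) > \kappa\,\mathcal{F}(A)$; but $A - B \geq 0$ and $F^{pq} \geq 0$ as a matrix (from $f_i > 0$ and concavity), so $\sum F^{pq}(B_{pq} - A_{pq}) \leq 0$, a contradiction. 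In the second alternative, $F^{ii}(A) > \kappa\mathcal{F}(A)$ for all $i$, i.e. all eigenvalues $f_i(\lambda(A))$ are bounded below by $\kappa\mathcal{F}(A) \geq \kappa\tau$ using Lemma~\ref{lem:struct}(b). Concavity of $f$ and $f(\lambda(A)) = h(x)$ bounded then force $\lambda(A)$, hence $A$, hence $(\d_i\d_{\bar j}u)$ and a fortiori $(\d_i\d_{\bar j}v)$, to be bounded above on the contact set; combined with $D^2 v \geq 0$ there, this bounds $\det(D^2 v)$ from above by a uniform constant. If instead $|\lambda(A)| \leq R$ at a contact point, then $A$ is directly bounded and the same conclusion holds. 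Feeding this uniform upper bound for $\det(D^2 v)$ back into \eqref{eq:abpplan} gives $c_0 \leq C'\,|P| \leq C'\,|B|$, but this is not yet a contradiction; rather, one runs the ABP inequality in its sharper form where the left side is $c_1\,(\mathrm{osc}_B\, v)^{2n}/\mathrm{diam}(B)^{2n}$, so that the uniform bound on the right gives $\mathrm{osc}_B\, v \leq C$, and since $v(x_0) = \inf_M v$ and $\sup_B v \leq 0$, this yields $\inf_M v \geq -C$.

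Two technical points deserve care. First, the passage from the real Monge-Amp\`ere determinant $\det(D^2 v)$ to the complex Hessian: on the contact set $D^2 v \geq 0$, so $(\d_i\d_{\bar j}v) \geq 0$ as a Hermitian matrix and $\det(D^2 v) \leq C\,\det(\d_i\d_{\bar j}v)^2$ by the standard inequality relating the real and complex Hessians of a function with nonnegative real Hessian; it then suffices to bound $\det(\d_i\d_{\bar j}v)$, which is controlled once the eigenvalues of $A$ are bounded above as arranged. Second, one must ensure the constant $\delta$ and radius $R$ used in Proposition~\ref{prop:subsol2} can be chosen uniformly over $x\in M$: this follows from compactness of $M$ together with assumption (ii), which guarantees that $h(x)$ stays in a compact subinterval of $(\sup_{\d\Gamma}f, \sup_\Gamma f)$ and that the relevant level sets $\d\Gamma^{h(x)}$ and translated cones move in a compact family, exactly as in the last sentence of the proof of Proposition~\ref{prop:subsol2}. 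The main obstacle is bookkeeping the ABP inequality so that the oscillation of $v$, and not merely the measure of the contact set, appears — i.e. using the quantitative ABP bound rather than its qualitative nonvanishing form — together with verifying that all constants coming from the subsolution are genuinely uniform in $x$.
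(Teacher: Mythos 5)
Your outline gets the local half of the argument right, and in essentially the same way as the paper: on the ABP contact set the real Hessian of $v=u-\underline{u}$ is (almost) nonnegative, hence $A\geq B$ up to a small error, and combined with $\lambda(A)\in\d\Gamma^{h(x)}$ the $\mathcal{C}$-subsolution condition forces $|\lambda(A)|\leq R$, which bounds $u_{i\bar j}$ and hence $\det(D^2v)$ on the contact set. (Incidentally, you do not need Proposition~\ref{prop:subsol2} or its two alternatives for this: Definition~\ref{defn:subsol} gives the bound directly, since $\lambda(A)$ lies in $(\lambda(B)-\delta\mathbf{1}+\Gamma_n)\cap\d\Gamma^{h(x)}\subset B_R(0)$; your treatment of the second alternative — "concavity and $f(\lambda(A))=h$ force $\lambda(A)$ bounded" — is not justified as stated, in particular because a $\mathcal{C}$-subsolution need not be admissible, so $\lambda(B)$ need not lie in $\Gamma$.)

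The genuine gap is in the global step. The ABP inequality on a small coordinate ball $B$ controls $\inf_{\d B}v-\inf_Bv$ (equivalently, in the perturbed form used in Proposition~\ref{prop:abp}, it gives a lower bound on the measure of the contact set in terms of the artificially inserted gap $\epsilon$), not $\mathrm{osc}_Bv$; and even if you had $\mathrm{osc}_Bv\leq C$, this would not bound $\inf_Mv$, because you only know $\sup_Bv\leq 0$ — the point where $\sup_Mv=0$ is attained may be far from $x_0$, and $v$ can be uniformly very negative on all of $B$ while having tiny oscillation there, so your final implication "$\mathrm{osc}_Bv\leq C$ and $\sup_Bv\leq 0$ yield $\inf_Mv\geq -C$" does not follow. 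What is missing is precisely the mechanism the paper uses to pass from local to global: first, $\Gamma\subset\{\sum\lambda_i>0\}$ gives $\Delta u\geq -C$, which via the Green's function of a Gauduchon metric (or the weak Harnack inequality chained over a covering of $M$, starting from the ball where $\sup u=0$) yields a uniform bound on $\Vert\,|u|^p\Vert_{L^1}$ for some $p>0$; then, since every contact point satisfies $v<L+\epsilon/2$ with $L=\inf_Mv$, Chebyshev gives $\mathrm{vol}(P)\leq \Vert\,|v|^p\Vert_{L^1}/|L+\epsilon/2|^p$, which contradicts the ABP lower bound $c_0\epsilon^{2n}\leq C'\,\mathrm{vol}(P)$ once $|L|$ is large. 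Without an integral (or measure-theoretic) bound of this kind, the ABP step alone cannot see the normalization $\sup_M(u-\underline{u})=0$ and the argument cannot close.
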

\begin{proof}
  Our proof is based on the method that Blocki~\cite{Bl05_1} used in the
  case of the complex Monge-Amp\`ere equation. To simplify notation we can
  assume $\underline{u}=0$, by changing $\chi$. We therefore have
  $\sup_M u =0$, and our goal is to obtain a lower bound for $L = \inf_M u$

Note that our assumptions for $\Gamma$ imply (see \cite{CNS3}) that
\[ \Gamma \subset \{(\lambda_1,\ldots,\lambda_n)\,:\, \sum_i \lambda_i
> 0\}, \]
which in turn implies that $\mathrm{tr}_\alpha g > 0$. It follows that
we have a lower bound for $\Delta u$ (the complex Laplacian with
respect to $\alpha$), and so using the Green's
function of a
Gauduchon metric conformal to $\alpha$ as in
Tosatti-Weinkove~\cite{TW10_1},
we have a uniform bound for $\Vert u \Vert_{L^1}$.

An alternative,
more local argument for obtaining a bound for $\Vert |u|^p\Vert_{L^1}$
for some $p >0$ follows from the weak
Harnack inequality, Gilbarg-Trudinger~\cite[Theorem
9.22]{GT98}. Indeed, suppose that we cover $M$ with a finite number of coordinate balls
$2B_i$, so that the balls $B_i$ of half the radius still cover $M$.
Using the lower bound $\Delta u \geq -C$, and the fact
that $u$ is non-positive, we can apply the weak Harnack inequality in
each ball $2B_i$ to obtain an estimate of the form
\[ \label{eq:wH}
\left( \int_{B_i} (-u)^p \right)^{1/p} \leq C_1\big[\inf_{B_i} (-u) +
1\big], \]
with $p, C_1 > 0$ depending only on the covering and the metric
$\alpha$. Our assumption $\sup_M u = 0$ implies that we have one ball,
say $B_0$ in which $\inf_{B_0} (-u) = 0$. Then \eqref{eq:wH} implies a bound on the
integral of $|u|^p$ on $B_0$. But this in turn gives a bound
$\inf_{B_i} |u| < C_2$, for all balls $B_i$ with $B_i\cap B_0 \ne
\emptyset$, and then \eqref{eq:wH} can be used again to bound the
integrals of $|u|^p$ on each such $B_i$. Continuing in this way, we
will end up with bounds on each ball, and so we obtain an a priori
bound for $\Vert |u|^p\Vert_{L^1}$ on 
$M$. 

Being a $\mathcal{C}$-subsolution means that for each $x$ the set
\[ \left(\lambda(\alpha^{j\bar p}\chi_{i\bar p}) + \Gamma_n\right)\cap
\d\Gamma^{h(x)}\]
is bounded. There is then a $\delta > 0$ and $R > 0$
such that at each $x$ we have
\[ \label{eq:c1} (\lambda(\alpha^{j\bar p}\chi_{i\bar p}) - \delta
\mathbf{1} + \Gamma_n) \cap \d\Gamma^{h(x)} \subset B_R(0). \]

Let us work in local coordinates $z_i$, for which the infimum $L$ is
achieved at the origin, and the coordinates are defined for $|z_i| <
1$, say. We write $B(1) = \{z\,:\, |z| < 1\}$. Let $v = u + \epsilon
|z|^2$ for a small $\epsilon > 0$. We have $\inf v = L =
v(0)$, and $v(z) \geq L + \epsilon$ for $z\in \d B(1)$.  From
Proposition~\ref{prop:abp} we obtain
\[\label{eq:c3} c_0 \epsilon^{2n} \leq \int_{P} \det(D^2v), \]
where $P$ is defined as in \eqref{eq:contactset}. 
As in Blocki~\cite{Bl05_1}, at any point $x\in P$ we have $D^2v(x) \geq
0$ and so
\[ \label{eq:c4} \det(D^2v) \leq 2^{2n}\det(v_{i\bar j})^2. \]
At the same time, if $x\in P$, then $D^2v(x) \geq 0$ implies that
$u_{i\bar j}(x) \geq -\epsilon \delta_{i\bar j}$. If $\epsilon$ is
sufficiently small (depending on the metric $\alpha$ and the choice of
$\delta$), then this implies that at $x\in P$
\[ \label{eq:c8} \lambda\big[\alpha^{j\bar p}(\chi_{i\bar p} +
u_{i\bar p})\big] \in \lambda(\alpha^{j\bar p}\chi_{i\bar p}) - \delta\mathbf{1} + \Gamma_n. \]
According to the equation $F(A) = h$ we also have
$\lambda\big[\alpha^{j\bar p}(\chi_{i\bar p} +
u_{i\bar p})\big]\in \d\Gamma^{h(x)}$ at $x$, so from \eqref{eq:c1} we get
an upper bound $|u_{i\bar j}| < C$. This
gives a bound for $v_{i\bar j}$ at any $x\in P$, so from \eqref{eq:c4} and
\eqref{eq:c3} we get
\[ \label{eq:c5} c_0\epsilon^{2n} \leq C' \mathrm{vol}(P). \]
By definition, for $x\in P$ we have $v(0) > v(x) - \epsilon / 2$,
and so $v(x) < L + \epsilon / 2$. This implies 
\[ \mathrm{vol}(P) \leq \frac{ \Vert |v|^p\Vert_{L^1} }{\left| L +
    \frac{\epsilon}{2}\right|^p}. \]
Since we already have a bound for $\Vert |v|^p\Vert_{L^1}$, this
inequality contradicts \eqref{eq:c5} if $|L|$ is very large. 
\end{proof}

We used the following variant of the Alexandroff-Bakelman-Pucci
maximum principle, similar to Gilbarg-Trudinger~\cite[Lemma 9.2]{GT98}. 
\begin{prop}\label{prop:abp}
Let $v : B(1) \to \mathbf{R}$ be smooth, such that $v(0) + \epsilon \leq 
 \inf_{\d B(1)} v$, where $B(1)$ denotes the unit ball in
$\mathbf{R}^n$. Define the set
\[ \label{eq:contactset} P = \left\{ x\in B(1)\,:\, \begin{minipage}{3in} $$\begin{aligned}
&|Dv(x)| < \frac{\epsilon}{2},\text{ and } \\
&v(y) \geq v(x) + Dv(x)\cdot (y-x)\,\text{ for all }y\in B(1)
\end{aligned} $$\end{minipage} \right\}. \]
Then for a dimensional constant $c_0 > 0$ we have
\[ \label{eq:abp} c_0 \epsilon^{n} \leq \int_P \det(D^2 v). \]
\end{prop}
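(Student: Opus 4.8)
The plan is to follow the classical proof of the Alexandroff--Bakelman--Pucci estimate. The key geometric fact is that the gradient map $Dv$ carries the contact set $P$ onto a set containing the ball of radius $\epsilon/2$ centered at the origin; once this is known, the conclusion follows from the change of variables formula together with the observation that $D^2v\geq 0$ on $P$.

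First I would establish the inclusion $\{p\in\mathbf{R}^n : |p|<\epsilon/2\}\subset Dv(P)$. Fix such a $p$ and consider the function $w(y)=v(y)-p\cdot y$ on the closed ball $\overline{B(1)}$. By continuity $w$ attains its minimum at some $x\in\overline{B(1)}$. The point $x$ cannot lie on $\d B(1)$: if it did, then $w(x)\leq w(0)$ would give $v(x)\leq v(0)+p\cdot x\leq v(0)+|p|<v(0)+\epsilon\leq\inf_{\d B(1)}v$, contradicting $x\in\d B(1)$. Hence $x$ is an interior minimum of $w$, so $Dv(x)=p$ (in particular $|Dv(x)|<\epsilon/2$), and $w(y)\geq w(x)$ for all $y\in B(1)$ rearranges to $v(y)\geq v(x)+Dv(x)\cdot(y-x)$. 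Comparing with \eqref{eq:contactset}, this says precisely that $x\in P$, and therefore $p=Dv(x)\in Dv(P)$.

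Next I would note that for $x\in P$, the definition \eqref{eq:contactset} forces $v$ to lie above its tangent plane at $x$, so a Taylor expansion gives $D^2v(x)\geq 0$; in particular $\det(D^2v(x))=|\det(D^2v(x))|\geq 0$ on $P$. Then, since $v$ is smooth, the map $Dv$ is $C^1$ and the area formula applies: with $\omega_n=\mathrm{vol}(B(1))$,
\[ \int_P \det(D^2 v)\,dx = \int_{\mathbf{R}^n}\#\{x\in P : Dv(x)=y\}\,dy \geq \mathrm{vol}\big(Dv(P)\big)\geq \mathrm{vol}\big(B(\epsilon/2)\big)= \omega_n 2^{-n}\epsilon^n, \]
using the inclusion from the previous step. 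This is exactly \eqref{eq:abp} with the dimensional constant $c_0=\omega_n 2^{-n}$.

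No step here is a serious obstacle. The only points needing care are the boundary argument guaranteeing that the minimum of $w$ is an interior point (which is where the normalization hypothesis $v(0)+\epsilon\leq\inf_{\d B(1)}v$ enters, via $|p|<\epsilon/2$), and checking that $P$ is a Borel set so that the area formula is legitimate; both are routine because $v$ is smooth.
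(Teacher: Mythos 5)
Your proof is correct and follows essentially the same route as the paper: the paper slides the plane $l(x)=v(0)+\xi\cdot x$ downward until it touches the graph of $v$ at an interior point of $P$, which is exactly your argument of minimizing $w(y)=v(y)-p\cdot y$ and using the hypothesis $v(0)+\epsilon\leq\inf_{\d B(1)}v$ to rule out boundary minima, followed by the same volume comparison $B(\epsilon/2)\subset Dv(P)$ via the area formula. Your write-up just makes the final change-of-variables step and the dimensional constant $c_0=\omega_n 2^{-n}$ explicit.
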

\begin{proof}
 The proof follows the argument of \cite[Lemma 9.2]{GT98}. Consider
 the graph of $v$, and let $\xi\in \mathbf{R}^n$ be such that $|\xi| <
 \frac{\epsilon}{2}$. The graph of the function $l(x) = v(0) +
 \xi\cdot x$ lies below the graph of $v$ on the boundary $\d B(1)$ by
 our assumption on $v$, and it intersects the graph of $v$ at
 $(0,v(0))$. This implies that for some $k \geq 0$, the graph of $l(x) -
 k$ is tangent to $v$ at some point $x\in B(1)$, and considering the
 largest such $k$ we will have $x\in P$. In particular the ball
 $B(\epsilon / 2)$ is in the image of $P$ under the gradient of $v$,
 i.e. $B(\epsilon / 2) \subset \nabla v(P)$. The inequality
 \eqref{eq:abp} follows by comparing volumes. 
\end{proof}

\begin{remark}
This method can also be used to obtain $C^0$-estimates for more
general types of equations, where the matrix $A$ in the
equation $F(A) = h$ depends on the gradient of $u$ as well. We
illustrate this with an example taken from the recent work of
Tosatti-Weinkove~\cite{TW13} on $(n-1)$-plurisubharmonic
functions. On a Hermitian manifold $(M,\alpha)$, given another
Hermitian metric $\chi$ the equation can be
written as
\[\label{eq:TWeqn}
\det\left( \chi + \frac{1}{n-1}\big[ (\Delta u)\alpha -
  \ddbar u\big] + \ast E\right) = e^h \det\alpha, \]
where $\ast$ is the Hodge star operator of $\alpha$ and
\[ E = \frac{1}{(n-1)!} \mathrm{Re}\big[\sqrt{-1}\d u\wedge
\overline{\d}(\alpha^{n-2})\big]. \]
In addition $\Delta$ is the complex Laplacian with respect to $\alpha$. 
It is assumed that $\alpha$ is a Gauduchon metric, and the form inside
the determinant in \eqref{eq:TWeqn} is positive definite. Normalizing
$u$ so that $\sup_M u = 0$, it is shown
in \cite{TW13} that this implies an $L^1$-bound $\Vert u\Vert_{L^1} <
C$. An argument using the weak Harnack inequality as above can also be
used to bound $\Vert |u|^p\Vert_{L^1}$ for some $p> 0$.
In \cite{TW13} this is then used together with a Moser iteration argument to
bound $\inf_M u$. We can obtain a different proof of this bound along
the lines of the argument above. 

As in the proof of Proposition~\ref{prop:C0},
choose coordinates $z_i$ in which the infimum $L = \inf_M u$ is
achieved at the origin and for a small $\epsilon > 0$ we let $v = u +
\epsilon|z|^2$. We apply Proposition~\ref{prop:abp} to obtain
\[ \label{eq:c9} c_0 \epsilon^{2n} \leq \int_P \det(D^2 v), \]
with the set $P$ in \eqref{eq:contactset}. By definition, if $x\in P$,
then we have $D^2v \geq 0$ and so $u_{i\bar j}(x) \geq -\epsilon
\delta_{ij}$, and in addition $|Dv(x)| < \epsilon / 2$. If $\epsilon$ is chosen sufficiently
small (depending on $\chi$ and $\alpha$), then 
Equation~\eqref{eq:TWeqn} implies an upper bound for $u_{i\bar j}(x)$
at all $x\in P$, using that $\chi$ is positive definite. From \eqref{eq:c9} we
then get
\[ c_0 \epsilon^{2n} \leq C'\mathrm{vol}(P), \]
but as before,
\[ \mathrm{vol}(P) \leq \frac{ \Vert |v|^p\Vert_{L^1}}{ \left| L +
    \frac{\epsilon}{2}\right|^p}, \]
which is a contradiction if $|L|$ is too large.

It is an interesting problem whether the $C^2$-estimate in
Section~\ref{sec:C2} can also be extended to more general equations
$F(A) = h$, where $A$ depends on the gradient of $u$ as well. In
particular this estimate is not known at present for
Equation~\eqref{eq:TWeqn}. Note that in the Riemannian case
$C^2$-estimates for such equations, under certain conditions, are derived in
Guan-Jiao~\cite{GJ14}. 
\end{remark}

\section{$C^2$-estimates}\label{sec:C2}
Our goal in this section is the following estimate for the complex Hessian of
$u$ in terms of the gradient. As in the introduction we assume that
$u$ satisfies an equation of the form $F(A) = h$, where $A^{ij} =
\alpha^{j\bar p}g_{i\bar p}$ and $g_{i\bar j} =\chi_{i\bar j} +
u_{i\bar j}$ for a given form $\chi$. In addition we assume the
existence of a $\mathcal{C}$-subsolution $\underline{u}$. 

\begin{prop}\label{prop:HMW}
  We have an estimate
  \[ |\d \overline{\d} u| \leq C(1 + \sup_M |\nabla u|^2_\alpha), \]
  where the constant depends on the background
  data, in particular $\Vert\alpha\Vert_{C^2}$, $\Vert h\Vert_{C^2}$,
  $\Vert\chi\Vert_{C^2}$ 
  and the subsolution $\underline{u}$. 
\end{prop}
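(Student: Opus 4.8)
The plan is to derive a differential inequality for the quantity $W = \log \lambda_{\max}(A) + \varphi(|\nabla u|^2) + \psi(u - \underline{u})$, where $\lambda_{\max}$ denotes the largest eigenvalue of the endomorphism $A$, and $\varphi, \psi$ are auxiliary one-variable functions to be chosen (as in Hou--Ma--Wu~\cite{HMW10}). Working at a point $x_0$ where $W$ achieves its maximum, we may fix normal-type coordinates for $\alpha$ so that $A$ is diagonal at $x_0$ with $\lambda_1 = \lambda_{\max} \ge \lambda_2 \ge \cdots \ge \lambda_n$; since $\lambda_1$ may not be smooth when eigenvalues collide, I would instead work with $\log g_{1\bar 1}$ (with a frame chosen so that $g_{1\bar 1}(x_0) = \lambda_1$), which is an upper barrier for $\log\lambda_1$ near $x_0$, a standard device. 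The first derivative condition $\nabla W = 0$ at $x_0$ gives identities relating the gradient of $g_{1\bar 1}$, the gradient of $|\nabla u|^2$, and $\nabla u$; the second derivative condition is that the linearized operator $L = F^{p\bar q}\nabla_p\nabla_{\bar q}$ (which is elliptic by assumption (i)) applied to $W$ is $\le 0$ at $x_0$.

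Next I would compute $L(\log g_{1\bar 1})$, $L(\varphi(|\nabla u|^2))$, and $L(\psi(u - \underline{u}))$ at $x_0$. Differentiating the equation $F(A) = h$ twice and commuting covariant derivatives (this is where $\Vert\alpha\Vert_{C^2}$, $\Vert h\Vert_{C^2}$, $\Vert\chi\Vert_{C^2}$ enter, through curvature and torsion of the Hermitian metric, plus derivatives of $\chi$ and $h$) produces an expression for $L(g_{1\bar 1})$ in terms of $\sum_i F^{i\bar i}|\nabla_1 g_{i\bar i}|^2$-type "good" third-order terms, controlled error terms of size $C\mathcal{F}(1 + \lambda_1)$, and the concavity term coming from the second derivative of $F$, which by concavity of $f$ contributes with a favorable sign. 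The term $L(\varphi(|\nabla u|^2))$ produces, besides good and controlled terms, a term $\varphi' F^{i\bar i}(\lambda_i^2 + \cdots)$ of size roughly $\varphi'\mathcal{F}\lambda_1$ (using $|\nabla_p g_{i\bar i}|\approx |\nabla u|\lambda_i$-type estimates and the equation to control $F^{i\bar i}\lambda_i^2$), while $L(\psi(u-\underline{u}))$ produces the crucial term $\psi' F^{i\bar i}(g_{i\bar i} - \chi_{i\bar i} - \underline{u}_{i\bar i}) = \psi'(\sum_i F^{i\bar i}A_i - \sum_i F^{i\bar i}B_i)$, together with $\psi''|\nabla(u-\underline{u})|^2_{F}$ which is nonnegative and can be absorbed into a good term when $\psi'' > 0$ is chosen large.

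The heart of the argument is the following dichotomy, and this is the step I expect to be the main obstacle: one splits into cases according to the size of $\lambda_n$ (the most negative, or least positive, eigenvalue) relative to $-\epsilon\lambda_1$ for a small fixed $\epsilon$. If $\lambda_n \ge -\epsilon\lambda_1$, one uses Proposition~\ref{prop:subsol2} applied to $A$ and $B = \lambda(B(x_0))$: either $\sum F^{p\bar q}(B_{pq} - A_{pq}) > \kappa\mathcal{F}$, which makes the subsolution term $\psi'(\sum F^{i\bar i}B_i - \sum F^{i\bar i}A_i) < -\psi'\kappa\mathcal{F}$ strongly negative, overwhelming the positive error terms of size $C(1+|\nabla u|^2)\mathcal{F}$ once $\psi'$ is large; or $F^{i\bar i} > \kappa\mathcal{F}$ for all $i$, which gives a uniform positive lower bound $F^{1\bar 1} \ge \kappa\mathcal{F}$ that lets one absorb the bad $\lambda_1$-terms (e.g. the $F^{1\bar 1}\lambda_1^2$ contribution forces $\lambda_1$ bounded, via the equation and Lemma~\ref{lem:struct}(b)). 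If instead $\lambda_n < -\epsilon\lambda_1$, one discards all but the $F^{n\bar n}$ terms: ellipticity $F^{n\bar n} = f_n > 0$ combined with $\lambda_n$ very negative forces, through the structure of $\Gamma$ and monotonicity, that $F^{n\bar n}$ is comparable to $\mathcal{F}$ (since $f_n$ is the largest partial derivative), and one extracts from the second-order inequality a bound $F^{n\bar n}\lambda_1^2 \lesssim F^{n\bar n}\cdot C(1+|\nabla u|^2)\lambda_1 + (\text{good terms})$, yielding $\lambda_1 \le C(1+|\nabla u|^2)$ directly. In all cases the maximum of $W$ is bounded by $C(1 + \sup|\nabla u|^2)$, and since $W$ controls $\log g_{1\bar 1}$ from below pointwise up to the bounded terms $\varphi + \psi$, we conclude $\sup_M \lambda_1 \le C(1 + \sup_M|\nabla u|^2)$; as $\mathrm{tr}_\alpha g > 0$ bounds the other eigenvalues in terms of $\lambda_1$, this gives $|\partial\overline\partial u| \le C(1 + \sup_M|\nabla u|^2_\alpha)$. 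The delicate points are the precise choices of $\varphi$ (typically $\varphi(t) = -\frac12\log(1 + \sup t - t)$ or similar) and $\psi$ (linear with large slope, i.e. $\psi(t) = -Dt$ or $e^{-Dt}$ with $D$ large), and keeping careful track that the "good" third-order terms genuinely dominate the Cauchy--Schwarz errors generated when the first-order critical equation is used to eliminate $\nabla_1 g_{1\bar 1}$ and $\nabla |\nabla u|^2$.
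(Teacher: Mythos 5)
Your overall skeleton is the same as the paper's: an HMW-type test function $\log\lambda_1+\phi(|\nabla u|^2)+\psi$, the maximum principle for the linearized operator, a dichotomy on how negative $\lambda_n$ is compared to $\lambda_1$, and Proposition~\ref{prop:subsol2} invoked exactly in the regime $-\lambda_n\le\delta\lambda_1$. But there are two genuine gaps. First, replacing the largest eigenvalue by the classical barrier $\log g_{1\bar 1}$ is not harmless on a Hermitian (non-K\"ahler) manifold. When you commute derivatives you pick up third-order torsion terms $\mathrm{Re}(u_{k\bar p1}\overline{T^p_{k1}})$ with $p>1$, as in \eqref{eq:ff2}; the paper absorbs these precisely with the positive terms $\sum_{p>1}\big(|g_{p\bar 1k}|^2+|g_{1\bar pk}|^2\big)/(\lambda_1-\widetilde\lambda_p)$ that come from the second derivatives of the eigenvalue (see \eqref{eq:ff1}--\eqref{eq:l2}), and this is why it perturbs $A$ by a small diagonal matrix to make $\lambda_1$ smooth rather than using $g_{1\bar 1}$. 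With the plain $g_{1\bar 1}$ barrier those good terms are absent, and nothing else in your inequality controls $F^{kk}u_{1\bar pk}\overline{T^p_{k1}}/\lambda_1$ (it is third order, so the second-order good terms $\phi'F^{kk}(|u_{pk}|^2+|u_{\bar pk}|^2)$ do not help). Your device is standard in the K\"ahler setting of Hou--Ma--Wu, but for the statement as given (Hermitian $\alpha$) it leaves the torsion terms unabsorbed unless you add a further correction to the test function in the spirit of Tosatti--Weinkove.

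Second, the quantitative mechanism in the case $-\lambda_n\le\delta\lambda_1$ does not close as you describe it. The subsolution alternative gives a term $\psi'F^{kk}(u-\underline u)_{k\bar k}>A\kappa\,\mathcal{F}$ (note this is a large \emph{positive} contribution to $LG$, contradicting $LG\le 0$; your sign is reversed), with $A$ a \emph{fixed} constant: it can only dominate errors of size $C_0\mathcal{F}$, not the errors of size $K\mathcal{F}$ with $K=1+\sup|\nabla u|^2$, and you cannot take $A\sim K$ without degrading the final bound from $\lambda_1\lesssim K$ to $\lambda_1\lesssim K^2$. The dangerous terms are the $\psi'^2F^{kk}|u_k|^2\sim A^2KF^{kk}$ errors created when the critical equation $G_k=0$ is used to eliminate $|g_{1\bar 1k}|^2/\lambda_1^2$: for indices with $F^{kk}$ huge these cannot be beaten by anything of size $\mathcal{F}$. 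The actual proof needs the index set $I=\{i:F^{ii}>\delta^{-1}F^{11}\}$: for $k\notin I$ the error carries the factor $\delta^{-1}F^{11}K$ and is beaten by the good term $\tfrac{1}{32K}F^{11}\lambda_1^2$ once $\lambda_1\ge CK$; for $k\in I$ one must use the concavity of $F$ \emph{quantitatively}, via \eqref{eq:concaveF} and the inequality \eqref{eq:con2} (which is exactly where $-\lambda_n\le\delta\lambda_1$ enters), to absorb $(1-2\delta)\sum_{k\in I}F^{kk}|g_{k\bar 11}|^2/\lambda_1^2$, together with $\psi''>0$ (so a linear $\psi(t)=-Dt$ will not do) and the choice $\delta\le\delta_0A^{-1}$ to absorb the remaining gradient terms. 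Your proposal invokes concavity only as a sign remark and asserts that the subsolution term overwhelms the $C(1+|\nabla u|^2)\mathcal{F}$-size errors once $\psi'$ is large; that assertion is false as stated, and without the index-set/concavity absorption the argument does not yield the linear-in-$K$ bound.
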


To simplify notation, we will assume that the subsolution
$\underline{u}=0$, since otherwise we could simply modify the
background form $\chi$. By definition this means that for each $x\in
M$ the sets $(\lambda(B(x)) + \Gamma_n)\cap \d\Gamma^{h(x)}$ are
bounded, where $B^{ij} = \alpha^{j\bar p}\chi_{i\bar p}$. We can find
$\delta, R > 0$ such that at each $x$, 
\[ \label{eq:subsol3} (\lambda(B) - 2\delta\mathbf{1} + \Gamma_n)\cap \d\Gamma^{h(x)}
\subset B_R(0). \]
In particular, by Proposition~\ref{prop:subsol2} we have a $\kappa >
0$ with the following property: at any
$x\in M$, if $|\lambda(A)| > R$ and $A$ is diagonal with eigenvalues
$\lambda_1,\ldots, \lambda_n$, then either 
\[ \label{eq:alt1} F^{ii}(A) > \kappa\sum_p
F^{pp}(A) \text{ for all $i$}, \]
or 
\[ \label{eq:subsol2}
\sum_p F^{pp}(A)\big[ B^{pp} - \lambda_p\big] > \kappa\sum_p
F^{pp}(A). \]
Also, Lemma~\ref{lem:struct} implies that we have a constant
$\tau > 0$ such that $\sum_p F^{pp}(A) > \tau$. 

Our calculation will mostly follow that of Hou-Ma-Wu~\cite{HMW10} which in
turn is based on ideas in Chou-Wang~\cite{CW01}. One key difference is
that instead of using $g_{1\bar 1}$ in suitable coordinates, we use
the maximum eigenvalue of the matrix $A$. This introduces good
positive terms which are useful in the Hermitian case, somewhat
similar to the extra term $\log \alpha^{p\bar q}g_{1\bar q}g_{p\bar
  1}$ introduced in Tosatti-Weinkove~\cite{TW13}. 
The idea of
exploiting the inequality \eqref{eq:subsol2} is from
Guan~\cite{Guan14}. A refinement of this also appears in
Guan~\cite{Guan14_1} where the two possibilities \eqref{eq:alt1} and
\eqref{eq:subsol2} are exploited, although the setup is not the same
as ours. 

We first review some basic formulas for the derivatives of eigenvalues
which can be found in Spruck~\cite{Spruck05} for instance. 
The derivatives of the eigenvalue $\lambda_i$ at a diagonal
matrix with distinct eigenvalues are
\[ \lambda_i^{pq} = \delta_{pi}\delta_{qi} \]
\[ \lambda_i^{pq,rs} =
(1-\delta_{ip})\frac{\delta_{iq}\delta_{ir}\delta_{ps}}{\lambda_i -
  \lambda_p} +
(1-\delta_{ir})\frac{\delta_{is}\delta_{ip}\delta_{rq}}{\lambda_i -
  \lambda_r}, \]
where $\lambda_i^{pq}$ denotes the derivative with respect to the
$pq$-entry. 

It follows from this that for any symbols $A^{pq}_k$ we have 
\[ \label{eq:l1}
\lambda_1^{pq,rs}A^{pq}_k A^{rs}_{\bar k} = \sum_{p > 1}
\frac{A^{p1}_kA^{1p}_{\bar k} + A^{1p}_kA^{p1}_{\bar k}}{\lambda_1 -
  \lambda_p}.\]

If $F(A) = f(\lambda_1,\ldots, \lambda_n)$ in terms of a symmetric
function of the eigenvalues, then at a diagonal matrix $A$ with
distinct eigenvalues we have (see also Gerhardt~\cite{Ge96})
\[ F^{ij} = \delta_{ij}f_i \]
\[ F^{ij, rs} = f_{ir}\delta_{ij}\delta_{rs} + \frac{f_i-f_j}{\lambda_i
  -\lambda_j}(1 - \delta_{ij})\delta_{is}\delta_{jr}. \]
Note that these formulas make sense even when the eigenvalues are not
distinct, since $F$ is a smooth function on the space of matrices if
$f$ is symmetric. In particular as $\lambda_i\to \lambda_j$ we also
have $f_i \to f_j$. 
It follows that 
\[\begin{aligned} \label{eq:concaveF}
  F^{ij,rs}u_{i\bar jk}u_{r\bar s\bar k} &= f_{ij}u_{i\bar
    ik}u_{j\bar j\bar k} + \sum_{p\ne q}
  \frac{f_p-f_q}{\lambda_p - \lambda_q}|u_{p\bar qk}|^2 \\
  &\leq f_{ij}u_{i\bar ik}u_{j\bar j\bar k} + \sum_{i > 1}
  \frac{f_1-f_i}{\lambda_1 - \lambda_i} |u_{i\bar 1k}|^2,
\end{aligned}\]
since if  $f$ is concave and symmetric, one can show (see
Spruck~\cite{Spruck05}) that $\frac{f_i - f_j}{\lambda_i -
  \lambda_j}\leq 0$. In particular $f_i \leq f_j$ if $\lambda_i \geq
\lambda_j$. 

We want to apply the maximum principle to a function $G$ of the form
\[ G = \log \lambda_1 + \phi(|\nabla u|^2) + \psi(u), \]
where $\lambda_1: M\to\mathbf{R}$ is the largest eigenvalue of the
matrix $A$ at each point. Since the eigenvalues of $A$ need not be
distinct at the point where $G$ achieves its maximum, we will perturb
$A$ slightly. 

To do this, choose local coordinates $z_i$, such that $G$ achieves its
maximum at the origin, and at the origin $A$ is diagonal with
eigenvalues $\lambda_1 \geq \lambda_2 \geq\ldots \geq \lambda_n$. Let
$B$ be a diagonal matrix such that $B^{11} = 0$ and $0 < B^{22} <
\ldots < B^{nn}$ are small, satisfying $B^{nn} < 2B^{22}$. 
Define the matrix $\widetilde{A} = A -
B$. At the origin, $\widetilde{A}$ has eigenvalues
\[ \widetilde{\lambda_1} = \lambda_1, \quad \widetilde{\lambda_i} =
\lambda_i - B^{ii}\,\text{ if }i > 1. \]
Since these are distinct, the eigenvalues of $\widetilde{A}$ define
smooth functions near the origin. 

In the calculations below we use derivatives with respect to the Chern
connection of $\alpha$. 
From the formulas for the derivatives of the $\widetilde{\lambda}_i$, we have
\[ \label{eq:ff1} \begin{aligned}
\widetilde{\lambda}_{1, k} &= \widetilde{\lambda}_1^{pq}(\widetilde{A}^{pq})_k = g_{1\bar 1k}
- (B^{11})_k \\
\widetilde{\lambda}_{1, k\bar k} &= \widetilde{\lambda}_1^{pq,rs}(\widetilde{A}^{pq})_k
(\widetilde{A}^{rs})_{\bar k} +
\widetilde{\lambda}_1^{pq}(\widetilde{A}^{pq})_{k\bar k} \\
&= g_{1\bar 1k\bar k} + \sum_{p > 1} \frac{|g_{p\bar 1k}|^2 +
  |g_{1\bar pk}|^2}{\lambda_1 - \widetilde{\lambda}_p} \\
&\quad + (B^{11})_{k\bar k} - 2\mathrm{Re}\sum_{p > 1} \frac{
  g_{p\bar 1k}(B^{1\bar p})_{\bar k} + g_{1\bar pk}(B^{p\bar 1})_{\bar
    k}}{ \lambda_1 - \widetilde{\lambda}_p} + \widetilde{\lambda}_1^{pq,rs}(B^{pq})_k
(B^{rs})_{\bar k} 
\end{aligned}\]
where we used Equation~\eqref{eq:l1}. Note that $B$ is a constant
matrix in our local coordinates, but its covariant derivatives may not
vanish. The
assumption $\sum_i \lambda_i > 0$ implies that $\sum_i \widetilde{\lambda}_i >
0$ if the matrix $B$ is sufficiently small, and so $|\widetilde{\lambda}_i| <
(n-1)\lambda_1$ for all $i$, which implies $(\lambda_1 -
\widetilde{\lambda}_p)^{-1} \geq (n\lambda_1)^{-1}$. Since we are
trying to bound $\lambda_1$ from above, we can assume $\lambda_1 >
1$.  We can also absorb the terms
$g_{p\bar1 k}(B^{1\bar p})_{\bar k}$ using
\[ \big| g_{p\bar 1k}(B^{1\bar p})_{\bar k}\big| \leq \frac{1}{4}
|g_{p\bar 1k}|^2 + C|B^{nn}|^2, \]
using that the derivative of $B$ is controlled by $B^{nn}$. In addition for $p > 1$ we have
\[ \frac{1}{\lambda_1 - \widetilde{\lambda}_p} \leq \frac{1}{B^{22}} <
\frac{2}{B^{nn}}, \]
and so 
\[ \frac{|B^{nn}|^2}{\lambda_1 - \widetilde{\lambda}_p} +
\widetilde{\lambda}_1^{pq,rs} (B^{pq})_k(B^{rs})_{\bar k} =
O(B^{nn}).\] 
 It follows that
\[ \widetilde{\lambda}_{1,k\bar k} \geq g_{1\bar 1k\bar k} +
\frac{1}{2n\lambda_1}\sum_{p > 1} (|g_{p\bar 1k}|^2 + |g_{1\bar
  pk}|^2) - 1,\]
if $B$ is chosen sufficiently small. 
Using that $g_{i\bar j} = \chi_{i\bar j} + u_{i\bar j}$, we get
\[ \begin{aligned} \label{eq:l2}
\widetilde{\lambda}_{1,k\bar k} &\geq \chi_{1\bar 1k\bar k} + u_{1\bar 1k\bar k} +
\frac{1}{2n\lambda_1}
\sum_{p > 1} (|\chi_{p\bar 1k} + u_{p\bar 1k}|^2 + |\chi_{1\bar pk} + u_{1\bar
    pk}|^2) - 1  \\
&\geq u_{1\bar 1k\bar k} + \frac{1}{3n\lambda_1}\sum_{p > 1}
(|u_{p\bar 1k}|^2 + |u_{1\bar pk}|^2) - C_0,
\end{aligned}\]
where $C_0$ is a constant depending only on the background data
(including $\chi$). From
here on out $C_0$ will always denote such a constant which may vary
from line to line, but does not depend on other parameters that we
choose later on. 

Commuting derivatives, we obtain
\[\label{eq:ff2} u_{1\bar 1k\bar k} = u_{k\bar k 1\bar 1} - 2\mathrm{Re}( u_{k\bar p
  1}\overline{T^p_{k1}}) + u_{i\bar j}\ast R + u_{i\bar j}\ast T\ast T, \]
where $R, T$ are the curvature and torsion of $\alpha$, and $\ast$
denotes a contraction. Using this in \eqref{eq:l2} we get
\[ \widetilde{\lambda}_{1,k\bar k} \geq u_{k\bar k1\bar 1} + \frac{1}{3n\lambda_1}\sum_{p > 1}
(|u_{p\bar 1k}|^2 + |u_{1\bar pk}|^2) -2\mathrm{Re}(u_{k\bar
  p1}\overline{T^p_{k1}}) - C_0\lambda_1, \]
since $u_{i\bar j}$ is controlled by $\lambda_1$, and we assumed
$\lambda_1 > 1$. 

We have  $u_{k\bar p1} = u_{1\bar pk} + u_{i\bar j}\ast T$. This means
that we can absorb almost all of the terms $u_{k\bar
  p1}\overline{T^p_{k1}}$ using the good positive sum, except for
$u_{k\bar 11}\overline{T^1_{k1}}$. We also rewrite $u$ in terms of
$g$, to finally obtain
\[ \label{eq:l3}
\widetilde{\lambda}_{1,k\bar k} \geq g_{k\bar k1\bar 1} - 2\mathrm{Re}(g_{k\bar
  11}\overline{T^1_{k1}}) - C_0\lambda_1. \]

Differentiating the equation $F(A) = h$, we have
\[ h_1 = F^{ij}g_{i\bar j1} = F^{kk}g_{k\bar k1},\]
\[ h_{1\bar 1} = F^{pq, rs} g_{p\bar q1}g_{r\bar s\bar 1} +
F^{kk}g_{k\bar k 1\bar 1}, \]
using that $F^{ij}$ is diagonal at the origin (since $A$ is
diagonal). Using this in Equation~\eqref{eq:l3} we get
\[ F^{kk}\widetilde{\lambda}_{1,k\bar k} \geq -F^{pq,rs}g_{p\bar q1}g_{r\bar s\bar
  1} - 2F^{kk}\mathrm{Re}(g_{k\bar 11}\overline{T^1_{k1}}) - C_0\lambda_1\mathcal{F}, \]
where we wrote $\mathcal{F} = \sum_k F^{kk}$ and used that
$\mathcal{F} > \tau$ to absorb a constant into $\lambda_1\mathcal{F}$.
Defining the linearized operator $Lw = F^{ij}w_{i\bar j}$, we have
\[ \label{eq:l4}
\begin{aligned}
L(\log \widetilde{\lambda}_1) &\geq \frac{ - F^{pq,rs}g_{p\bar q1}g_{r\bar
    s\bar1}}{\lambda_1} - \frac{F^{kk}|g_{1\bar 1k}|^2}{\lambda_1^2}
\\
&\quad -
\frac{F^{kk}}{\lambda_1} 2\mathrm{Re}(g_{k\bar 1 1}\overline{T^1_{k1}})
  - C_0\mathcal{F}.
\end{aligned}
\]
We have
\[ \label{eq:l7} \begin{aligned}
 g_{1\bar 1k} &= \chi_{1\bar 1k} + u_{1\bar 1k} \\
&= \chi_{1\bar 1k} + u_{k\bar 11} - T^1_{k1}\lambda_1 \\
&= (\chi_{1\bar 1k} - \chi_{k\bar11}) + g_{k\bar 11} - T^1_{k1}\lambda_1,
\end{aligned}\]
and so
\[ |g_{1\bar 1k}|^2 \leq |g_{k\bar 11}|^2 -
2\lambda_1\mathrm{Re}(g_{k\bar11}\overline{T^1_{k1}}) + C_0(
\lambda_1^2 + |g_{1\bar 1 k}|)\]

Using this and \eqref{eq:l7} again in Equation~\eqref{eq:l4}, we get
\[ \label{eq:l5}
\begin{aligned}
L(\log \widetilde{\lambda}_1) &\geq \frac{-F^{pq,rs}g_{p\bar q1}g_{r\bar s\bar
    1}}{\lambda_1} - \frac{F^{kk}|g_{k\bar 11}|^2}{\lambda_1^2} - C_0(\mathcal{F} + \lambda_1^{-2}|F^{kk}g_{1\bar 1 k}|). 
\end{aligned} \]

As a reminder, we note that in this calculation $\widetilde{\lambda}_1$ denotes the largest
eigenvalue of the perturbed endomorphism $\widetilde{A} = A - B$. At
the point where we are calculating, this coincides with the largest
eigenvalue of $A$, but at nearby points it is a small perturbation. We
could take $B\to 0$, and obtain the same differential inequality
\eqref{eq:l5}  for the largest eigenvalue of $A$ as well, but this
would only hold in a viscosity sense because the largest eigenvalue of
$A$ may not be $C^2$ at the origin, if some eigenvalues coincide.

We now begin the main calculation for proving
Proposition~\ref{prop:HMW}. 
\begin{proof}[Proof of Proposition~\ref{prop:HMW}]
Set $K = \sup |\nabla u|^2 + 1$, and consider the function
\[ G = \log\widetilde{\lambda}_1 + \phi(|\nabla u|^2) + \psi(u), \]
where $\phi$ is the same as the function used in \cite{HMW10}:
\[ \phi(t) = -\frac{1}{2}\log\left(1 - \frac{t}{2K}\right),\] and it satisfies 
\[ (4K)^{-1} < \phi' < (2K)^{-1}, \quad \phi'' = 2\phi'^2 > 0. \]
We normalize $u$ so that $\inf u=0$, so from Proposition~\ref{prop:C0}
we already have a bound on $\sup u$. We then let $\psi: [0,\sup
u]\to\mathbf{R}$ be defined by
\[ \psi(t) = -2At + \frac{A\tau}{2} t^2, \]
where $\tau$ is chosen sufficiently small depending on $\sup u$ (we decrease the $\tau$ from
before if necessary), so that $\psi$ satisfies the bounds
\[ A \leq -\psi' \leq 2A, \quad \psi'' = A\tau. \]
Here $A$ is a large constant that we will choose later.

Let us write $w_k=-(B^{11})_k / \lambda_1$, which appears in the
derivative of $\log\widetilde{\lambda}_1$. We assume $\lambda_1 > 1$,
so this is a bounded quantity. We have
\[ \begin{aligned}
  G_k &= \frac{g_{1\bar 1k}}{\lambda_1} + \phi'(u_{pk}u_{\bar p} + u_p
  u_{\bar pk}) + \psi' u_k + w_k\\
  G_{kk} &= (\log \widetilde{\lambda}_1)_{k\bar k} + \phi''\Big| u_{pk}u_{\bar p} +
  u_pu_{\bar pk}\Big|^2  \\
  &\quad + \phi'\Big( u_{pk\bar k}u_{\bar p} + u_p u_{\bar pk\bar k} +
  \sum_p(|u_{pk}|^2 + |u_{\bar pk}|^2)\Big) + \psi'' u_ku_{\bar k} +
  \psi' u_{k\bar k}.
  \end{aligned}\]

Commuting derivatives, we have the identities
\[ \begin{aligned}  u_{pk\bar k} &=  u_{k\bar kp} - T^q_{kp}u_{q\bar k} + R_{k\bar
    kp}^{\,\,\,\,\,\, q} u_q \\
  &= u_{k\bar kp} - T^k_{kp}\lambda_k + T^q_{kp}\chi_{q\bar k} +
  R_{k\bar k p}^{\,\,\,\,\,\, q}u_q
  \end{aligned}\]
and
\[ \begin{aligned}
  u_{\bar pk\bar k} &= u_{k\bar k\bar p} - \overline{T^q_{kp}}u_{k\bar
    q} \\
  &= u_{k\bar k\bar p} - \overline{T^k_{kp}}\lambda_k +
  \overline{T^q_{kp}}\chi_{k\bar q}.
  \end{aligned} \]
Differentiating the equation $F(A)=h$ once, we have
\[ F^{kk}u_{k\bar kp} = F^{kk}(g_{k\bar kp} - \chi_{k\bar kp}) = h_p -
F^{kk}\chi_{k\bar kp}, \]
and so
\[ \begin{aligned} F^{kk}u_{pk\bar k}u_{\bar p} &= h_pu_{\bar p} - F^{kk}\chi_{k\bar
  kp}u_{\bar p} - T^k_{kp} F^{kk}\lambda_k u_{\bar p} + T^q_{kp}
F^{kk}\chi_{q\bar k}u_{\bar p} + F^{kk}R_{k\bar kp}^{\,\,\,\,\,\,\,\,\,
  q}u_qu_{\bar p} \\
&\geq - C_0(K^{1/2} + K^{1/2}\mathcal{F} + K^{1/2}\mathcal{F} +
K\mathcal{F}) - \epsilon_1 F^{kk}\lambda_k^2 -
C_{\epsilon_1}\mathcal{F}K \\
&\geq -C_0K\mathcal{F} - \epsilon_1F^{kk}\lambda_k^2 -
C_{\epsilon_1}\mathcal{F}K. 
\end{aligned}\]
We have used the inequality (valid for each $k,p$)
\[ |F^{kk}\lambda_k u_{\bar p}| \leq \epsilon_1 F^{kk}\lambda_k^2 +
C_{\epsilon_1}F^{kk}K, \]
for any $\epsilon_1 > 0$ and corresponding $C_{\epsilon_1}>0$, which
implies that if we sum over $k$ then
\[ |F^{kk}T^k_{kp}\lambda_ku_{\bar p}| \leq \epsilon_1
F^{kk}\lambda_k^2 + C_{\epsilon_1}\mathcal{F}K. \]
The same estimate also holds for $F^{kk}u_{\bar pk\bar k}u_p$
(which has fewer terms). We
have $\phi' < (2K)^{-1}$, so combining these estimates we obtain
\[ \phi'F^{kk}(u_{pk\bar k}u_{\bar p} + u_pu_{\bar pk\bar k}) \geq -
C_0\mathcal{F} - \epsilon_1 K^{-1} F^{kk}\lambda_k^2 -
C_{\epsilon_1}\mathcal{F}. \]
This implies that at the maximum of $G$
\[ \begin{aligned}
  0 \geq LG &\geq L(\log \widetilde{\lambda}_1) + F^{kk}\phi''\Big|u_{pk}u_{\bar p} + u_p
  u_{\bar pk}\Big|^2 + F^{kk}\phi'\sum_p( |u_{pk}|^2 + |u_{\bar
    pk}|^2) \\
  &\quad + \psi'' F^{kk}u_ku_{\bar k} + \psi' F^{kk}u_{k\bar k} - C_0\mathcal{F} \\
&\geq \frac{-F^{ij, rs}g_{i\bar j1}g_{r\bar s\bar 1}}{\lambda_1} - \frac{F^{kk}|g_{k\bar
    11}|^2}{\lambda_1^2}  +F^{kk}\phi''\Big|u_{pk}u_{\bar p} + u_p
  u_{\bar pk}\Big|^2 \\
&\quad + F^{kk}\phi'\sum_p( |u_{pk}|^2 + |u_{\bar
    pk}|^2) + \psi''F^{kk} u_ku_{\bar k} + F^{kk} \psi'u_{k\bar k} \\
&\quad - C_0( 
  \mathcal{F} + \lambda_1^{-2}|F^{kk}g_{1\bar 1k}|) - C_{\epsilon_1}\mathcal{F} -
  \epsilon_1 K^{-1} F^{kk}\lambda_k^2. 
  \end{aligned}\]

We have
\[\begin{aligned} F^{kk}|u_{k\bar k}|^2 &= F^{kk}(\lambda_k -
  \chi_{k\bar k})^2 \\
&\geq \frac{1}{2}F^{kk}\lambda_k^2 - C_0\mathcal{F}. \\
\end{aligned}\]
Note that
$\phi' > (4K)^{-1}$, so if we choose
$\epsilon_1 = \frac{1}{16}$, we can use half of the $\phi' F^{kk}|u_{k\bar k}|^2$ term to
cancel the negative $\epsilon_1$ term. Since this fixes $\epsilon_1$,
we can absorb the $C_{\epsilon_1}$ term into $C_0$. It follows that 
\[ \label{eq:l9} \begin{aligned}
0 &\geq \frac{-F^{ij, rs}g_{i\bar j1}g_{r\bar s\bar 1}}{\lambda_1} - \frac{F^{kk}|g_{k\bar
    11}|^2}{\lambda_1^2}  +F^{kk}\phi''\Big|u_{pk}u_{\bar p} + u_p
  u_{\bar pk}\Big|^2 \\
&\quad + \frac{1}{32K}F^{kk}\lambda_k^2 + \frac{1}{16K}F^{kk}\sum_p( |u_{pk}|^2 + |u_{\bar
    pk}|^2) + \psi''F^{kk} u_ku_{\bar k} \\
&\quad + \psi'F^{kk}u_{k\bar k} - C_0( \mathcal{F} + \lambda_1^{-2}|F^{kk}g_{1\bar 1k}|).
  \end{aligned}\]
To deal with the final term, we use the equation $G_k=0$. This
implies that
\[ \frac{g_{1\bar 1k}}{\lambda_1} = -\phi'(u_{pk}u_{\bar p} +
  u_pu_{\bar pk}) - \psi'u_k -w_k, \]
so
\[ \label{eq:m5} \begin{aligned} 
 \lambda_1^{-2}F^{kk}|g_{1\bar 1k}| &\leq  \frac{1}{2K}
 \lambda_1^{-1}F^{kk}\Big( |u_{pk}| + |u_{\bar pk}|\Big) K^{1/2} + 2A
 \lambda_1^{-1}F^{kk}|u_k| + C_0\mathcal{F}. 
\end{aligned} \]
It is clear that the term involving $|u_{pk}| + |u_{\bar pk}|$ can be
absorbed by the fifth term in \eqref{eq:l9}, up to adding a further
multiple of $\mathcal{F}$. We therefore have
\[ \label{eq:l10} \begin{aligned}
LG &\geq \frac{-F^{ij, rs}g_{i\bar j1}g_{r\bar s\bar 1}}{\lambda_1} - \frac{F^{kk}|g_{k\bar
    11}|^2}{\lambda_1^2}  +F^{kk}\phi''\Big|u_{pk}u_{\bar p} + u_p
u_{\bar pk}\Big|^2 + \\
&\quad  +\frac{1}{20K} F^{kk}\left(\lambda_k^2 + \sum_p
  |u_{pk}|^2\right) \\
&\quad + \psi''F^{kk} u_ku_{\bar k} + \psi'F^{kk}u_{k\bar k} - C_0(\mathcal{F} + A\lambda_1^{-1}F^{kk}|u_k|), 
  \end{aligned}\]

Following Hou-Ma-Wu~\cite{HMW10} we deal with two cases separately, depending on whether
$-\lambda_n > \delta\lambda_1$ or not, for a small $\delta > 0$ to be
chosen later. 

\bigskip
\noindent {\bf Case 1: $-\lambda_n > \delta\lambda_1$}, so in
particular $\lambda_n < 0$. We use the
equation $G_k=0$ to write
\[ \label{eq:m6} \begin{aligned} -\frac{F^{kk}|g_{1\bar 1k}|^2}{\lambda_1^2} &=
- F^{kk}\Big| \phi'(u_{pk}u_{\bar p} + u_p u_{\bar
  pk}) + \psi' u_k + w_k\Big|^2 \\
&\geq - 2\phi'^2 F^{kk}\Big|u_{pk}u_{\bar p} + u_p u_{\bar
  pk}\Big|^2 - 3(2A)^2 \mathcal{F} K - C_0 
\end{aligned}\]

From \eqref{eq:l7} we also have
\[ \frac{|g_{k\bar 1 1}|^2}{\lambda_1^2} \leq \frac{|g_{1\bar
    1k}|^2}{\lambda_1^2} + C_0(1 + \lambda_1^{-1}|g_{1\bar 1k}|), \]
and using the inequality \eqref{eq:m5} absorbing the $|u_{pk}|,
|u_{\bar pk}|$ terms in the same way, we obtain
\[ \label{eq:m7}
\begin{aligned} \frac{F^{kk}|g_{k\bar 1 1}|^2}{\lambda_1^2} &\leq
  \frac{F^{kk}|g_{1\bar 1k}|^2}{\lambda_1^2} + 2AF^{kk}|u_k| +
C_0\mathcal{F} \\
&\leq \frac{F^{kk}|g_{1\bar 1k}|^2}{\lambda_1^2} + 2A\mathcal{F}K^{1/2} + C_0\mathcal{F}
\end{aligned}
\]
Using this and \eqref{eq:m6}
in our inequality \eqref{eq:l10} for $LG$, together with concavity of $F$
and $\phi'' = 2\phi'^2$,  we find that at the origin 
\[\label{eq:a1}\begin{aligned} 
 0 &\geq \frac{1}{32K}F^{kk}\lambda_k^2 + \psi' F^{kk}u_{k\bar k} -
 C_0(\mathcal{F} + A\lambda_1^{-1}\mathcal{F}K^{1/2}) -
 2A\mathcal{F}K^{1/2} - 12A^2\mathcal{F}K \\
&\geq \frac{1}{32K}F^{kk}\lambda_k^2 + \psi'F^{kk}u_{k\bar k} -
C_0\mathcal{F} - 14A^2\mathcal{F}K,
\end{aligned} \]
where we have assumed that $\lambda_1 > C_0$ and $A > 1$. Note that
$F^{nn} \geq \frac{1}{n}\mathcal{F}$, and  $\lambda_n^2 >
\delta^2\lambda_1$ by our assumption. In addition 
\[ \begin{aligned}
 \psi' F^{kk}u_{k\bar k} &= \psi'F^{kk}g_{k\bar k} - \phi'F^{kk}\chi_{k\bar
   k} \\
&\geq - 2A\mathcal{F}\lambda_1 - C_0\mathcal{F}, 
\end{aligned}\]
and so from \eqref{eq:a1} we have
\[ \begin{aligned} 
 0 &\geq \frac{1}{32nK}\mathcal{F}\delta^2\lambda_1^2 -
 2A\mathcal{F}\lambda_1 - C_0\mathcal{F} - 14A^2\mathcal{F}K.
\end{aligned} \]
Dividing by $K\mathcal{F}$ and rearranging we get the required bound
for $K^{-1}\lambda_1$, with the bound depending on $\delta, A$ that
will be chosen in Case 2 below. 

\bigskip
\noindent {\bf Case 2:} $-\lambda_n \leq \delta \lambda_1$. Define the
set
\[ I = \{i\,:\, F^{ii} > \delta^{-1}F^{11}\}. \]
Using that $G_k=0$ as above, we have
\[ \begin{aligned}
 -\sum_{k\not\in I} \frac{ F^{kk}|g_{1\bar
    1k}|^2}{\lambda_1^2} &= - 2\phi'^2\sum_{k\not\in I} F^{kk}\Big|
u_{pk}u_{\bar p} + u_p u_{\bar pk}\Big|^2 -
3\psi'^2\sum_{k\not\in I} F^{kk}|u_k|^2 -C_0 \\
&\geq -\phi'' \sum_{k\not\in I} F^{kk}\Big|u_{pk}u_{\bar p} + u_p
u_{\bar pk}\Big|^2 - 3(2A)^2 \delta^{-1}F^{11}K - C_0.
\end{aligned}\]
In addition, for $k\not\in I$, from \eqref{eq:m7} we have
\[ \frac{F^{kk}|g_{k\bar 11}|^2}{\lambda_1^2} \leq
\frac{F^{kk}|g_{1\bar 1k}|^2}{\lambda_1^2} +
2A\delta^{-1}F^{11}K^{1/2} + C_0\mathcal{F}.\]
Our inequality \eqref{eq:l10} for $LG$ then implies
\[ \label{eq:11}\begin{aligned}
 0 &\geq \frac{-F^{ij,rs}g_{i\bar j1}g_{r\bar s\bar 1}}{\lambda_1} -
\sum_{k\in I} \frac{F^{kk}|g_{k\bar 11}|^2}{\lambda_1^2} +
 \phi''\sum_{k\in I} F^{kk}\Big| u_{pk}u_{\bar p} +u_pu_{\bar
   pk}\Big|^2  \\
&\quad + \psi''F^{kk}|u_k|^2 + \frac{1}{32K}F^{kk}\lambda_k^2 +
\psi' F^{kk}u_{k\bar k} \\
&\quad - C_0(\mathcal{F} + A\lambda_1^{-1}F^{kk}|u_k|) - 13A^2\delta^{-1}F^{11}K. \\
\end{aligned}\]

We want to choose $\delta$ so small that
\[ \label{eq:20}\frac{4\psi'^2\delta}{1-\delta} \leq \frac{1}{2}\psi''. \]
Note that $|\psi'| \leq 2A$, and $\psi'' = \tau A$ for a fixed $\tau >
0$, so we can choose $\delta \leq \delta_0A^{-1}$, for some fixed number
$\delta_0$ (depending on $\tau$). 

To deal with the first four terms in \eqref{eq:11} we use that $G_k=0$
to obtain
\[\label{eq:21} \begin{aligned} 2\phi'^2 \sum_{k\in I} & F^{kk}\Big|u_{pk}u_{\bar p} + u_pu_{\bar
  pk}\Big|^2 = 2\sum_{k\in I} F^{kk}\left| \frac{g_{1\bar
      1k}}{\lambda_1} + \psi'u_k + w_k\right|^2 \\
 & \geq 2\delta\sum_{k\in I} \frac{F^{kk}|g_{1\bar1 k}|^2}{\lambda_1^2}
  - \frac{4\delta\psi'^2}{1-\delta} \sum_{k\in I} F^{kk}|u_k|^2 - C_0, 
\end{aligned}\]
just as in \cite{HMW10}, using the elementary inequality $|a+b|^2 \geq
\delta |a|^2 - \frac{\delta}{1-\delta}|b|^2$. More precisely we used
\[ |a + b + c|^2 \geq \delta |a|^2 - \frac{\delta}{1-\delta}|b+c|^2
\geq \delta|a|^2 - \frac{2\delta}{1-\delta}|b|^2 -
\frac{2\delta}{1-\delta}|c|^2. \]
Using \eqref{eq:m7} we have
\[ \label{eq:f1} \begin{aligned}
 2\delta\sum_{k\in I} \frac{F^{kk}|g_{1\bar 1k}|^2}{\lambda_1^2} &\geq
 2\delta\sum_{k\in I} \frac{F^{kk}|g_{k\bar 11}|^2}{\lambda_1^2}
 -4\delta AF^{kk}|u_k|^2 - \delta C_0\mathcal{F} \\
&\geq 2\delta\sum_{k\in I} \frac{F^{kk}|g_{k\bar 11}|^2}{\lambda_1^2}
- \delta^2A^2F^{kk}|u_k|^2 - C_0\mathcal{F}. 
\end{aligned} \]
We also have
\[ A\lambda_1^{-1}F^{kk}|u_k| \leq \frac{1}{2}
A^2\lambda_1^{-1}F^{kk}|u_k|^2 + \frac{1}{2}\lambda_1^{-1}\mathcal{F}. \]
Using this, \eqref{eq:f1} and \eqref{eq:21} in \eqref{eq:11} we have
\[ \label{eq:f2} \begin{aligned}
 0 &\geq \frac{-F^{ij,rs}g_{i\bar j1}g_{r\bar s\bar 1}}{\lambda_1} -
(1-2\delta) \sum_{k\in I} \frac{F^{kk}|g_{k\bar 11}|^2}{\lambda_1^2} \\
&\quad +\left( \frac{1}{2} \psi'' - \delta^2A^2 -
  A^2\lambda_1^{-1}\right) F^{kk}|u_k|^2 + \frac{1}{32K}F^{kk}\lambda_k^2 +
\psi' F^{kk}u_{k\bar k} \\
&\quad - C_0\mathcal{F} - 13A^2\delta^{-1}F^{11}K. \\
\end{aligned}\]

In addition we claim that 
\[ \label{eq:con2}
\frac{-F^{ij,rs} g_{i\bar j1}g_{r\bar s\bar1}}{\lambda_1} -
(1-2\delta)\sum_{k\in I} \frac{F^{kk}|g_{k\bar11}|^2}{\lambda_1^2}
\geq 0. \]
Indeed, from Equation \eqref{eq:concaveF} we have
\[ F^{ij,rs}g_{i\bar j1}g_{r\bar s\bar 1} \leq \sum_{k\in I}
\frac{F^{11}-F^{kk}}{\lambda_1 - \lambda_k} |g_{k\bar 11}|^2, \]
using that $\frac{F^{11}-F^{kk}}{\lambda_1 - \lambda_k}\leq 0$. For
$k\in I$ we have $F^{11} < \delta F^{kk}$, and so
\[ \frac{F^{11} - F^{kk}}{\lambda_1 - \lambda_k} \leq 
\frac{(\delta-1)F^{kk}}{\lambda_1-\lambda_k}. \]
It is therefore enough to show
\[ \frac{\delta - 1}{\lambda_1 - \lambda_k} \leq
-\frac{1-2\delta}{\lambda_1}. \]
Rearranging, this is equivalent to $(2\delta - 1)\lambda_k \leq
\delta\lambda_1$. If $\lambda_k \geq 0$, this is clear, while if
$\lambda_k < 0$, then 
\[ (2\delta - 1)\lambda_k \leq -\lambda_k \leq -\lambda_n \leq
\delta\lambda_1, \]
where we used our assumption for Case 2. 

Using \eqref{eq:con2} in \eqref{eq:f2} we have
\[ \label{eq:f3} \begin{aligned}
 0 &\geq \left( \frac{1}{2} \psi'' - \delta^2A^2 -
  A^2\lambda_1^{-1}\right) F^{kk}|u_k|^2 + \frac{1}{32K}F^{kk}\lambda_k^2 +
\psi' F^{kk}u_{k\bar k} \\
&\quad - C_0\mathcal{F} - 13A^2\delta^{-1}F^{11}K. \\
\end{aligned}\]

Suppose that $\lambda_1 > R$ with the $R$ in
\eqref{eq:subsol3}. There are two cases to consider:
\begin{itemize}
\item If \eqref{eq:subsol2} holds, then we have $\psi' F^{kk}u_{k\bar
    k} > A\kappa \mathcal{F}$. Choosing $A$ so that $A\kappa > C_0$,
  Equation \eqref{eq:f3} implies
\[ \begin{aligned}
 0 &\geq \left( \frac{1}{2} \psi'' - \delta^2A^2 -
  A^2\lambda_1^{-1}\right) F^{kk}|u_k|^2 +
\frac{1}{32K}F^{kk}\lambda_k^2 
- 13A^2\delta^{-1}F^{11}K. \\
\end{aligned}\]
At this point $\psi'' = \tau A$ is fixed, so if we choose $\delta$
sufficiently small, and $\lambda_1$ is sufficiently large, then then
first term is positive. We then obtain
\[ 0 \geq \frac{1}{32K}F^{11}\lambda_1^2 
- 13A^2\delta^{-1}F^{11}K, \]
from which the required bound for $K^{-1}\lambda_1$ follows. 
\item If \eqref{eq:subsol2} does not hold, then we must have $F^{11} >
  \kappa\mathcal{F}$.  We have
  \[ \begin{aligned}
    F^{kk}u_{k\bar k} &= F^{kk}\lambda_k - F^{kk}\chi_{k\bar k} \\
&\leq \frac{1}{128AK}F^{kk}\lambda_k^2 +
  C_0AK\mathcal{F}, 
\end{aligned} \]
which together with \eqref{eq:f3} and $|\psi'| < 2A$ implies 
\[ 0 \geq \frac{1}{64K}\kappa \mathcal{F}\lambda_1^2 - C_0A^2K\mathcal{F} -
13A^2\delta^{-1} \mathcal{F}K, \]
where we also used the bounds $\kappa\mathcal{F} < F^{11} <
\mathcal{F}$. This inequality again implies the bound of the form
$\lambda_1 < CK$, which we are after. 
\end{itemize} 
\end{proof}

\begin{remark}\label{rem:C0C2}
  Under an extra concavity condition for $f$, one can obtain
  $C^2$-estimates directly from $C^0$-estimates, as is well-known in
  the case of the Monge-Amp\`ere equation. See
  e.g. Yau~\cite{Yau78}, Tosatti-Weinkove~\cite{TW10_1} and also Weinkove~\cite{W06},
  Fang-Lai-Ma~\cite{FLM11}, Sun~\cite{Sun13} for
  the inverse $\sigma_k$-equations.

  Suppose that in addition to
  the structural conditions (i), (ii), (iii) in the introduction, $f$ 
  satisfies the extra conditions 
\[ \label{eq:strongconv}
f_{11} + \frac{f_1}{\lambda_1} \leq 0,\quad \text{and }\lambda_1f_1 \leq
\lambda_if_i\quad \text{ for all }i > 1, \]
where $\lambda_1$ is the largest eigenvalue, and we work at
a diagonal matrix $A$, as before. See Fang-Lai~\cite{FL12_1} for
related conditions. In particular it is easy to check that these conditions hold for the
complex Monge-Amp\`ere equation written in the form $f(\lambda) = \log
\lambda_1\cdot\ldots \lambda_n$. 

These conditions imply a stronger
concavity property of $F$, which allows us to get rid of the first
two terms in \eqref{eq:l5}. Indeed, from \eqref{eq:concaveF} we have
\[ \label{eq:concave_form}
F^{pq,rs}g_{p\bar q1}g_{r\bar s\bar1} \leq f_{kk}|g_{k\bar k1}|^2
 + \sum_{k > 1} \frac{f_1 - f_k}{\lambda_1 -
   \lambda_k}|g_{k\bar 11}|^2. \]
For $k > 1$ the condition $\lambda_1f_1 \leq \lambda_kf_k$ implies
\[ -\frac{1}{\lambda_1} \frac{f_1 - f_k}{\lambda_1 - \lambda_k} -
\frac{f_k}{\lambda_1^2} \geq 0.\]
From this we have
\[ \begin{aligned}
-\frac{F^{pq,rs}g_{p\bar q1}g_{r\bar s\bar 1}}{\lambda_1} - \frac{F^{kk}|g_{k\bar
    11}|^2}{\lambda_1^2} \geq -\frac{f_{11}|g_{1\bar 11}|^2}{\lambda_1} -
\frac{F^{11}|g_{1\bar 11}|^2}{\lambda_1^2} \geq 0,
\end{aligned}\]
using the assumption $\lambda_1 f_{11} + f_1 \leq 0$ as well. Using
\eqref{eq:l5} we have
\[ \label{eq:l6} L(\log \widetilde{\lambda}_1) \geq - C_0(\mathcal{F} +
\lambda_1^{-2}F^{kk}|g_{1\bar1 k}|). \]

Note that \eqref{eq:strongconv} implies that $\lambda_i > 0$ for all
$i$, and so in particular we must have $\Gamma=\Gamma_n$. Suppose that
$\sigma > \sup_{\d \Gamma} f$ and $\d\Gamma^\sigma$ is non-empty, and
let $x_0\in \d\Gamma^\sigma$ be the closest point to the origin. For
any $\epsilon > 0$ we can find an $R > 0$ such that if $\lambda\in
\d\Gamma^\sigma$ and $\lambda_1 > R$, then $f_1 < \epsilon
\mathcal{F}$ at $\lambda$. To see
this, let $\mathbf{n}_\lambda$ be the inward pointing unit normal to
$\d\Gamma^\sigma$ at $\lambda$. By convexity we have
\[ \mathbf{n}_\lambda\cdot(\lambda - x_0) < 0, \]
which implies $\mathbf{n}_\lambda\cdot \lambda <
\mathbf{n}_\lambda\cdot x_0$. In particular we have 
\[ \lambda_1 \frac{f_1(\lambda)}{|\nabla f(\lambda)|} < |x_0|. \]
In view of the equivalence between $|\nabla f|$ and $\mathcal{F}$ (see
\eqref{eq:s2}), this implies our claim. In particular in
Proposition~\ref{prop:subsol1} only the alternative \eqref{eq:s3}
occurs. It follows that if the largest eigenvalue $\lambda_1$ is
sufficiently large, then Equation~\eqref{eq:subsol2} will be
satisfied, if we assume the existence of a $\mathcal{C}$-subsolution. 

Recall that $g_{i\bar j} = \chi_{i\bar j} + u_{i\bar j}$. Let us
normalize $u$ so that
$\inf_M u = 0$, and consider the function
\[ G = \log(\lambda_1) + \psi(u), \]
where $\psi : [0,\infty)\to\mathbf{R}$ is defined by
\[ \psi(x) = -Ax + \frac{1}{1+x} -1, \]
for a constant $A > 1$ to be chosen later (this is the function used
by Phong-Sturm~\cite{PS09}). Then for $x\geq 0$ we have
\[ \begin{gathered}
-Ax - 1 < \psi \leq -Ax, \\
 A < -\psi' < 2A, \\
\psi'' = \frac{2}{(1+x)^3}.
\end{gathered}\]
At the maximum
of $G$ we compute in coordinates as before, i.e. $\alpha$ is the
identity and $g$ is diagonal, and we make a small perturbation to the
matrix $\alpha^{i\bar p}g_{j\bar p}$ by a diagonal matrix, so that the eigenvalues are distinct
at this point (this does not change $\lambda_1$ at the
origin). Writing $w_k = -(B^{11})_k / \lambda_1$ as before, at the origin we have
\[ \begin{aligned} 
0 &= G_k = \frac{g_{1\bar 1k}}{\lambda_1} + \psi' u_k + w_k\\
0 &\geq L(G) = L(\log \widetilde{\lambda}_1) + \psi'' F^{kk}u_k u_{\bar
  k} + F^{kk}\psi' u_{k\bar k}, 
\end{aligned}\] 
and so using \eqref{eq:l6}
\[ \begin{aligned}
 0 &\geq \psi''F^{kk}u_ku_{\bar k} + \psi' F^{kk}(g_{k\bar
  k}-\chi_{k\bar k}) - C_0( \mathcal{F} +
\lambda_1^{-2}F^{kk}|g_{1\bar 1k}|). 
\end{aligned}
\]
Using the properties of $\psi$, if we assume that $\lambda_1 > R$ for
sufficiently large $R$, we will have
\[ 0 \geq 2(1+u(0))^{-3} F^{kk}|u_k|^2 + A\kappa\mathcal{F} -
C_0\mathcal{F}
- C_0\lambda_1^{-2}F^{kk}|g_{1\bar 1k}|.\]
Choose $A$ so that $A\kappa = C_0 + 1$. This implies
\[ \label{eq:l8} 0 \geq 2(1+u(0))^{-3} F^{kk}|u_k|^2 + \mathcal{F} -
C_0\lambda_1^{-2}F^{kk}|g_{1\bar 1k}|. \]
Using the equation $G_k=0$ we have (we can assume that $|w_k| < 1$)
\[ \begin{aligned}
  C_0\lambda_1^{-2}F^{kk}|g_{1\bar1k}| &=
  C_0\lambda_1^{-1}F^{kk}(|\psi'u_k| + 1) \\
  &\leq
\lambda_1^{-1}F^{kk}|u_k|^2 +
\big[(2C_0A)^2+C_0\big]\lambda_1^{-1}\mathcal{F}.
\end{aligned}\]

Using this in \eqref{eq:l8}, we have 
\[ 0 \geq \Big(2(1+u(0))^{-3} - \lambda_1^{-1}\Big)F^{kk}|u_k|^2 + \Big(1
- \big[(2C_0A)^2+C_0\big]\lambda_1^{-1}\Big)\mathcal{F}. \]
It follows that at the maximum of $G$ we must have
\[ \lambda_1 < \frac{1}{2}(1+u(0))^3 + C_1,\]
where $C_1$ only depends on the background data (we also choose $C_1 >
R$).  Therefore
\[ G(0) < 3\log(1 + u(0)) + C_2 - Au(0). \]
Since $\log(1 + x) \leq x$, we obtain
\[ G(0) < C_2 - (A-3)u(0) \leq C_2. \]
Since $G$ achieves its maximum at $0$, we have $G(p) \leq C_2$ for all
$p$, from which we finally obtain the global
estimate
\[ \label{eq:trg} \mathrm{tr}_\alpha g \leq Ce^{A(u - \inf_M u)}. \]
Note that we assumed $\inf_M u = 0$ earlier but wrote this last
estimate in the usual form that is independent of the normalization of
$u$. At this point we can either use the $C^0$-estimate
Proposition~\ref{prop:C0} to obtain an estimate for
$\mathrm{tr}_\alpha g$, or this $C^0$-estimate can be directly
derived from \eqref{eq:trg} as in Weinkove~\cite{W06} or
Tosatti-Weinkove~\cite{TW10}.  
\end{remark}

\section{Liouville theorem}\label{sec:DK}

Suppose as before, that $\Gamma\subset \mathbf{R}^n$ is an open convex cone,
containing the positive orthant $\Gamma_n$ and not equal to all of
$\mathbf{R}^n$. In addition assume that $\Gamma$ is preserved under
permuting the coordinates. It follows that
\begin{equation}\label{eq:G}
  \Gamma\subset \{ (x_1,\ldots,x_n)\,:\, \sum x_i > 0\}.
\end{equation}

\begin{defn}  \label{defn:Gammasol}
 Suppose $u: \mathbf{C}^n\to\mathbf{R}$ is continuous. We say that $u$ is
 a (viscosity) $\Gamma$-subsolution if for all $h\in C^2$ such that
 $u-h$ has a local maximum at $z$, we have $\lambda(h_{i\bar j})\in\overline{\Gamma}$,
 where $\lambda(A)$ denotes the eigenvalues of the Hermitian matrix
 $A$.
  
  We say that $u$ is a $\Gamma$-solution, if it is a
  $\Gamma$-subsolution and in addition for all $z\in \mathbf{C}^n$, if
  $h\in C^2$ and $u-h$ has a local minimum at $z$, then
  $\lambda(h_{i\bar j}(z))\in \mathbf{R}^n\setminus
    \Gamma$. 
  \end{defn}

Note that \eqref{eq:G} implies that every $\Gamma$-subsolution is
subharmonic. 
Suppose that we define the function $F_0$ on the space of Hermitian
matrices by the property that
\[ \label{eq:Fdef} \lambda(A) - F_0(A)(1,1,\ldots,1) \in
\overline{\Gamma}, \]
and define $F$ on the space of symmetric $2n\times 2n$ matrices by
$F(M) = F_0(p(M))$, where
\[ p(M) = \frac{M + J^TMJ}{2}, \]
and $J$ is the standard complex structure. 
Then a $\Gamma$-subsolution (resp. solution) $u$ is the same as a viscosity
subsolution (resp. solution) of the nonlinear equation $F(D^2u) =
0$. Note that $F$ is concave and elliptic, but in general not
uniformly elliptic.
Many of the basic results about viscosity subsolutions and
solutions found in Caffarelli-Cabr\'e~\cite{CC95} can still be
applied with the same proofs. In particular we have the following.

\begin{prop}\label{prop:viscprop}
  \begin{enumerate}
    \item If $u_k$ are $\Gamma$-solutions (resp. subsolutions) converging locally uniformly
      to $u$, then $u$ is also a $\Gamma$-solution (resp. subsolution). 
    \item If $u, v$ are $\Gamma$-subsolutions, then $\frac{1}{2}(u+v)$ is also a
      $\Gamma$-subsolution, using that $\Gamma$ is convex. 
   \end{enumerate}
\end{prop}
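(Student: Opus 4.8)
The plan is to treat both items as standard stability properties of viscosity sub/supersolutions, adapted to the (concave, degenerate elliptic) operator $F = F_0 \circ p$, and I would handle (1) first since it is also used in the proof of (2).

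For part (1), subsolution case: given $h \in C^2$ with $u - h$ attaining a local maximum at $z_0$, I would first reduce to a \emph{strict} local maximum by replacing $h$ with $h + \epsilon|z-z_0|^2$, which replaces $h_{i\bar j}(z_0)$ by $h_{i\bar j}(z_0) + \epsilon\,\delta_{ij}$; since $\overline{\Gamma}$ is closed it suffices to prove the conclusion for strict maxima and then let $\epsilon \to 0$. For a strict maximum, the standard compactness argument shows that for $k$ large $u_k - h$ attains its maximum over a small closed ball $\overline{B_r(z_0)}$ at an interior point $z_k$, and $z_k \to z_0$; since $u_k$ is a $\Gamma$-subsolution, $\lambda(h_{i\bar j}(z_k)) \in \overline{\Gamma}$, and because $h \in C^2$ the eigenvalues converge to those of $h_{i\bar j}(z_0)$, which therefore lie in the closed set $\overline{\Gamma}$. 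The supersolution half of the $\Gamma$-solution statement is the mirror image: maxima become minima, $h + \epsilon|z-z_0|^2$ becomes $h - \epsilon|z-z_0|^2$, and $\overline{\Gamma}$ becomes the closed set $\mathbf{R}^n \setminus \Gamma$. Combined, this gives that a locally uniform limit of $\Gamma$-solutions is a $\Gamma$-solution. I expect this part to be entirely routine.

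For part (2), the conceptual point I would isolate first is that $\mathcal{P} := \{A \text{ Hermitian} : \lambda(A) \in \overline{\Gamma}\}$ is a \emph{closed convex cone} — the classical fact that a spectral set of Hermitian matrices is convex precisely when the corresponding set of eigenvalue tuples is convex and permutation-symmetric, which is exactly where the hypothesis that $\Gamma$ is convex enters — so that $\mathcal{P}$ is closed under addition and, since $\Gamma \supseteq \Gamma_n$, monotone ($A \in \mathcal{P}$, $B \geq A \Rightarrow B \in \mathcal{P}$). Formally this says $F$ is concave and $F(D^2 \tfrac{u+v}{2}) \geq \tfrac12(F(D^2 u) + F(D^2 v)) \geq 0$; the work is to make this rigorous in the viscosity sense. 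My preferred route mirrors Caffarelli--Cabr\'e: regularize by sup-convolution, $u^\varepsilon, v^\varepsilon$, which remain $\Gamma$-subsolutions on the interior (the equation is translation invariant), are semiconvex, and decrease to $u, v$; by Alexandrov's theorem, at almost every point $p(D^2 u^\varepsilon), p(D^2 v^\varepsilon) \in \mathcal{P}$, hence $p(D^2 \tfrac{u^\varepsilon + v^\varepsilon}{2}) = \tfrac12(p(D^2 u^\varepsilon) + p(D^2 v^\varepsilon)) \in \mathcal{P}$ almost everywhere; Jensen's lemma then upgrades this almost-everywhere subsolution property of the semiconvex function $\tfrac12(u^\varepsilon + v^\varepsilon)$ to the viscosity sense, and letting $\varepsilon \to 0$ and invoking part (1) finishes. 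Alternatively I could run the Crandall--Ishii theorem on sums directly: doubling variables with a penalization $j|x-y|^2$ around a strict local maximum point of $\tfrac12(u+v) - h$ (obtained by the same perturbation as in part (1)), one extracts jet matrices for $u$ and $v$ whose $(1,1)$-parts lie in $\mathcal{P}$ and whose sum is $\leq 2 D^2 h(z_0) + o(1)$, and concludes using that $\mathcal{P}$ is a closed, monotone convex cone.

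The main obstacle is genuinely in part (2): the formal computation is one line, but transferring it to the viscosity framework requires either the sup-convolution/Alexandrov/Jensen package or the theorem on sums, and in either case one must keep careful track of the fact that $F$ only sees the $(1,1)$-component $p(M) = \tfrac12(M + J^T M J)$ of a real symmetric Hessian, as well as verify the spectral-cone convexity statement cleanly. Part (1) I expect to cost nothing beyond bookkeeping.
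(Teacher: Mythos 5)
Your argument is correct and is essentially the paper's own route: the paper gives no independent proof, simply asserting that the relevant results of Caffarelli--Cabr\'e apply ``with the same proofs,'' and your part (1) stability argument plus the sup-convolution/Alexandrov/Jensen (or theorem-on-sums) treatment of the half-sum in part (2) is precisely that machinery, which indeed only needs degenerate ellipticity, translation invariance, and concavity of $F$, together with the spectral-cone convexity fact you isolate.
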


An important consequence is that mollifications of 
$\Gamma$-subsolutions are again $\Gamma$-subsolutions. Indeed, a
mollification can be written as a uniform limit of averages of a
larger and larger number of translates. 

We will use the following simple comparison result. 
\begin{lem}\label{lem:comp}
  Suppose that $u$ is a smooth $\Gamma$-subsolution and $v$ is a 
  $\Gamma$-solution on a bounded open set $\Omega\subset \mathbf{C}^n$, and in
  addition $u=v$ on $\partial \Omega$. Then $u\leq v$ in $\Omega$. 
\end{lem}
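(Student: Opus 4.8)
The plan is to prove the comparison Lemma~\ref{lem:comp} by a standard viscosity-theory argument: assume for contradiction that $\max_{\overline\Omega}(u-v) > 0$, and since $u=v$ on $\partial\Omega$ this maximum is attained at an interior point and is strictly positive. The key point is that $u$ is \emph{smooth} and a $\Gamma$-subsolution, so at every point $x$ we have $\lambda(u_{i\bar j}(x))\in\overline\Gamma$; we want to perturb $u$ slightly to make it a strict subsolution in the sense that its complex Hessian eigenvalues lie in $\Gamma$ (the open cone), and then contradict the definition of $v$ being a $\Gamma$-solution at the interior maximum point of $u-v$.

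First I would replace $u$ by $u_\varepsilon = u + \varepsilon(|z|^2 - C)$ for a small $\varepsilon>0$ and a constant $C$ large enough that $u_\varepsilon \le u$ on $\partial\Omega$ (so still $\le v$ there), while $\max_{\overline\Omega}(u_\varepsilon - v) > 0$ is preserved for $\varepsilon$ small since the original max is strictly positive. The complex Hessian of $u_\varepsilon$ is $u_{i\bar j} + \varepsilon\,\delta_{i\bar j}$, whose eigenvalues are $\lambda(u_{i\bar j}) + \varepsilon\mathbf{1}$. Since $\lambda(u_{i\bar j}(x))\in\overline\Gamma$ and $\Gamma$ contains the positive orthant (so $\overline\Gamma + \varepsilon\mathbf{1}\subset\Gamma$ because $\Gamma$ is an open cone containing $\Gamma_n$ and is convex), we get $\lambda((u_\varepsilon)_{i\bar j}(x))\in\Gamma$ for every $x\in\Omega$. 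Thus $u_\varepsilon$ is a smooth function with complex Hessian eigenvalues in the \emph{open} cone $\Gamma$ everywhere.

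Now let $x_0\in\Omega$ be a point where $u_\varepsilon - v$ attains its (positive) maximum over $\overline\Omega$; by the boundary condition $x_0$ is interior. Then $h := u_\varepsilon - (u_\varepsilon - v)(x_0)$ is a $C^2$ function with $v - h$ attaining a local maximum at $x_0$, equivalently $v - h$ has a local minimum — wait, more carefully: $u_\varepsilon - v$ has a local max at $x_0$ means $v - u_\varepsilon$ has a local min at $x_0$, i.e. $v - h$ has a local min at $x_0$ where $h = u_\varepsilon - \text{const}$. By the definition of $v$ being a $\Gamma$-solution (the supersolution half), this forces $\lambda(h_{i\bar j}(x_0)) = \lambda((u_\varepsilon)_{i\bar j}(x_0)) \in \mathbf{R}^n\setminus\Gamma$, contradicting what we established in the previous paragraph. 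Hence $\max_{\overline\Omega}(u-v)\le 0$, i.e. $u\le v$ in $\Omega$.

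The main obstacle — really the only subtle point — is making sure the perturbation argument is clean: one must check that $\overline\Gamma + \varepsilon\mathbf{1} \subset \Gamma$ (this uses that $\Gamma$ is an open convex cone with $\Gamma_n\subset\Gamma$, so $\mathbf{1}$ is an interior point and adding a positive multiple of an interior direction to a closed-cone point lands in the interior), and that the strict positivity of the original maximum survives the $O(\varepsilon)$ perturbation so that the maximum of $u_\varepsilon - v$ is still interior and positive. Everything else is a direct application of Definition~\ref{defn:Gammasol}. Note one does not even need viscosity machinery for $u$ here, precisely because $u$ is assumed smooth — the argument only tests $v$ against the smooth function $u_\varepsilon$, which is exactly what the solution property of $v$ allows.
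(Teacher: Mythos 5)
Your proof is correct and follows essentially the same route as the paper: perturb by $\varepsilon|z|^2$ to push the eigenvalues of the smooth subsolution strictly into the open cone $\Gamma$ (using convexity and $\Gamma_n\subset\Gamma$), then test the supersolution half of the $\Gamma$-solution definition at an interior extremum of the difference to reach a contradiction. The only cosmetic difference is that you control the boundary by subtracting a constant $C$, whereas the paper keeps the minimum of $v-u-\epsilon|z|^2$ interior simply by taking $\epsilon$ small relative to the strictly negative interior minimum.
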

\begin{proof}
  If $v < u$ at some point in $\Omega$, then $v - u$ achieves
  a negative minimum at a point in $\Omega$, so for small $\epsilon >
  0$, the function $v - u - \epsilon|z|^2$ also has a minimum, at a point
  $p\in \Omega$. Since $v$ is a $\Gamma$-solution and $u$
  is smooth, this implies that
  \[ \lambda(u_{i\bar j} + \epsilon \delta_{i\bar j}) \in
  \mathbf{R}^n\setminus \Gamma. \]
  This contradicts that $u$ is a $\Gamma$-subsolution. Indeed, we
  have $\lambda(u_{i\bar  j})\in\overline{\Gamma}$, and since
  $\Gamma_n\subset \Gamma$ and $\Gamma$ is convex, this implies $\lambda(u_{i\bar j} +
  \epsilon \delta_{i\bar j})\in \Gamma$.   
\end{proof}

Note that since the $F$ defined by \eqref{eq:Fdef} is not uniformly elliptic,
the comparison result might not hold in full
generality if $u$ is only a continuous $\Gamma$-subsolution. The
following lemmas will be used in an inductive argument. 

\begin{lem}\label{lem:gamma'}
  Suppose that $\Gamma\not = \Gamma_n$. Then
  $\Gamma'\subset\mathbf{R}^{n-1}$ given by 
\[ \Gamma' = \{(x_1,\ldots,x_{n-1})\,:\, (x_1,\ldots,x_{n-1},0)\in
\Gamma\} \]
  satisfies the same conditions as $\Gamma$. I.e. $\Gamma'$ is a symmetric, open
  convex cone, containing $\Gamma_{n-1}$,
  and $\Gamma'\not=\mathbf{R}^{n-1}$. In addition
  $\overline{\Gamma}\cap \{x_n=0\} = \overline{\Gamma'}$.
\end{lem}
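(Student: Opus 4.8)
The plan is to identify the hyperplane $\{x_n=0\}$ with $\mathbf{R}^{n-1}$, so that $\Gamma'$ is literally the slice $\Gamma\cap\{x_n=0\}$. With this identification four of the asserted properties are formal: $\Gamma'$ is the preimage of the open set $\Gamma$ under the continuous linear inclusion $x'\mapsto(x',0)$, hence open; it is a cone and is convex because $\Gamma$ is; and it is invariant under permutations of $\{1,\dots,n-1\}$ because those extend to permutations of $\{1,\dots,n\}$ fixing $n$, under which $\Gamma$ is invariant. So the real content is: (1) $\Gamma'\neq\emptyset$ and $\Gamma'\supseteq\Gamma_{n-1}$; (2) $\Gamma'\neq\mathbf{R}^{n-1}$; (3) $\overline{\Gamma}\cap\{x_n=0\}=\overline{\Gamma'}$. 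Throughout I will use two standard facts about a convex set $U$: the segment from an interior point to a point of $\overline U$ lies in $\mathrm{int}(U)$ except possibly at the closure endpoint, and $\mathrm{int}(\overline U)=\mathrm{int}(U)$. Applied to $U=\overline{\Gamma}$ these give $\mathrm{int}(\overline{\Gamma})=\Gamma$, and, since $\overline{\Gamma}$ is then a convex cone, that $a+b\in\Gamma$ whenever $a\in\Gamma$ and $b\in\overline{\Gamma}$ (write $a+b=\tfrac12(2a)+\tfrac12(2b)$).

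For (1): since $\Gamma\supsetneq\Gamma_n$ is open it contains a point with a strictly negative coordinate, hence by symmetry a point $\hat p$ with $\hat p_n<0$; as $\mathbf 1=(1,\dots,1)\in\Gamma_n\subseteq\Gamma$ has positive last coordinate, the segment $[\hat p,\mathbf 1]\subseteq\Gamma$ meets $\{x_n=0\}$ in a point $r=(r',0)\in\Gamma$, so $\Gamma'\neq\emptyset$. Averaging $r$ over the $(n-1)!$ permutations of its first $n-1$ entries gives, by convexity, a point $(c,\dots,c,0)\in\Gamma$ with $c=\tfrac1{n-1}\sum_i r_i$, and $c>0$ by \eqref{eq:G}; rescaling, $(1,\dots,1,0)\in\Gamma$. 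Now given $x'=(x_1,\dots,x_{n-1})$ with all $x_i>0$, put $m=\min_i x_i>0$ and write $(x',0)=m(1,\dots,1,0)+q$ with $q=(x_1-m,\dots,x_{n-1}-m,0)\in\overline{\Gamma_n}\subseteq\overline{\Gamma}$; since $m(1,\dots,1,0)\in\Gamma$, the "interior plus closure" fact yields $(x',0)\in\Gamma$, i.e. $x'\in\Gamma'$. Hence $\Gamma_{n-1}\subseteq\Gamma'$.

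For (2): if $\Gamma'=\mathbf{R}^{n-1}$ then $\mathbf e_1=(1,0,\dots,0)\in\Gamma$ and also $-\mathbf e_1\in\Gamma$; applying the transposition of coordinates $1$ and $j$ (under which $\Gamma$ is invariant) gives $\pm\mathbf e_j\in\Gamma$ for every $j$, and then convexity forces $0=\tfrac12\mathbf e_1+\tfrac12(-\mathbf e_1)\in\Gamma$, contradicting \eqref{eq:G} (which gives $0\notin\Gamma$). For (3): $\overline{\Gamma'}\subseteq\overline{\Gamma}\cap\{x_n=0\}$ is immediate by taking limits of sequences in $\Gamma'\times\{0\}\subseteq\Gamma$; conversely, if $(y',0)\in\overline{\Gamma}$, fix any $(r',0)\in\Gamma$ from part (1), and for $t\in(0,1]$ the point $t(r',0)+(1-t)(y',0)=\bigl((1-t)y'+tr',\,0\bigr)$ lies in $\mathrm{int}(\overline{\Gamma})=\Gamma$ by the segment fact, so $(1-t)y'+tr'\in\Gamma'$; letting $t\to0^{+}$ gives $y'\in\overline{\Gamma'}$.

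The step I expect to be the main obstacle is part (1), specifically pinning down that the open slice $\Gamma'$ is large enough to contain all of $\Gamma_{n-1}$: this is where the hypothesis $\Gamma\neq\Gamma_n$ genuinely enters (forcing the boundary rays of $\Gamma$ to protrude past the coordinate hyperplanes), and it requires combining the symmetrization trick with \eqref{eq:G} and the convexity lemma. The remaining assertions are comparatively routine, being formal consequences of openness, convexity, and permutation symmetry.
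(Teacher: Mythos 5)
Your proof is correct, and it follows the same broad outline as the paper's (nonemptiness from a point of $\Gamma$ with negative last coordinate, then $\Gamma_{n-1}\subset\Gamma'$ by adding a distinguished vector, then the closure identity), but the mechanics differ in two places. The paper's key device is the vector $\mathbf{e}=(1,\dots,1,-\epsilon)\in\Gamma$: it adds $\mathbf{e}$ to points of the open cone $\Gamma_n$ to land on the slice $\{x_n=0\}$, and it reuses $\mathbf{e}$ (together with multiples of $\mathbf{1}$) in a two-case sequence argument, according to the sign of $y^{(k)}_n$, to push approximating points of $\overline{\Gamma}\cap\{x_n=0\}$ into $\Gamma\cap\{x_n=0\}$. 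You instead manufacture the point $(1,\dots,1,0)\in\Gamma$ directly, by intersecting the segment from $\hat p$ to $\mathbf{1}$ with the hyperplane and symmetrizing (using \eqref{eq:G} to get positivity), and then you lean on two standard convexity facts, $\mathrm{int}(\overline{\Gamma})=\Gamma$ and the segment-to-interior lemma, both to get $\Gamma+\overline{\Gamma_n}\subset\Gamma$ for the inclusion $\Gamma_{n-1}\subset\Gamma'$ and to prove $\overline{\Gamma}\cap\{x_n=0\}\subset\overline{\Gamma'}$ by sliding a boundary point toward the interior point $(r',0)$ within the slice. What your route buys is a cleaner closure argument with no case analysis and no $\epsilon$-bookkeeping (it also quietly sidesteps the paper's slightly off arithmetic in the step where $\mathbf{e}$ is added to $(x_1+1,\dots,x_{n-1}+1,\epsilon)$); what the paper's route buys is a completely hands-on argument using only the cone additivity of $\Gamma$, without invoking the general facts about interiors of closures of convex sets. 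You also spell out $\Gamma'\ne\mathbf{R}^{n-1}$ via $0\notin\Gamma$, which the paper simply calls clear.
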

\begin{proof}
  It is clear that $\Gamma'$ is a symmetric open cone, and
  $\Gamma'\ne\mathbf{R}^{n-1}$. 
  The assumption $\Gamma\ne \Gamma_n$, and openness of $\Gamma$,
  means that there is at least one vector in $\Gamma$ with a negative
  entry, and in particular $\Gamma'$ is non-empty. 
 Using that $\Gamma_n\subset \Gamma$ and scaling, we can then obtain that
  for a small $\epsilon > 0$, we have $\mathbf{e} = (1, 1, 
  \ldots, 1, -\epsilon)\in \Gamma$.

  If $(x_1,\ldots,x_{n-1})\in \Gamma_{n-1}$, then $(x_1 + 1, \ldots,
  x_{n-1}+1, \epsilon) \in \Gamma$, and adding $\mathbf{e}$ to this vector we
  have $(x_1,\ldots, x_{n-1},0)\in \Gamma$. This implies
  $(x_1,\ldots,x_{n-1})\in \Gamma'$.

  It remains to show the claim about $\overline{\Gamma'}$. The
  inclusion $\overline{\Gamma'}\subset \overline{\Gamma}\cap
  \{x_n=0\}$ is clear (we are thinking of $\Gamma'$ as a subset of the
  hyperplane $\{x_n=0\}$ in $\mathbf{R}^n$).  For the reverse
  inclusion, suppose that $x=(x_1,\ldots, x_{n-1},0)\in
  \overline{\Gamma}\cap \{x_n=0\}$. This implies that we have $y^{(k)}
  = (y^{(k)}_1,\ldots, y^{(k)}_n)\in \Gamma$ converging to $x$. If
  $y^{(k_i)}_n \leq 0$ along a subsequence, then
  \[ y - y^{(k_i)}_n(1,1,\ldots,1) \in \Gamma \cap\{x_n=0\}, \]
  also converges to $x$, and so $(x_1,\ldots,x_{n-1})\in
  \overline{\Gamma'}$. Otherwise $y^{(k_i)}_n > 0$ along a
  subsequence, in which case
  \[ y + \epsilon^{-1}y^{(k_i)}_n\mathbf{e} \in \Gamma\]
  converges to $x$, and again
  $(x_1,\ldots,x_{n-1})\in\overline{\Gamma'}$. 
\end{proof}

\begin{lem}\label{lem:gamma'2}
Suppose that $v:\mathbf{C}^n\to\mathbf{R}$ is a $\Gamma$-solution,
$\Gamma\ne\Gamma_n$, and $v$ is independent of
the variable $z_n$. Then letting $\Gamma' = \Gamma \cap \{x_n=0\}
\subset \mathbf{R}^{n-1}$, the function $w(z_1,\ldots,z_{n-1}) =
v(z_1,\ldots, z_{n-1},0)$ is a $\Gamma'$-solution on
$\mathbf{C}^{n-1}$. 
\end{lem}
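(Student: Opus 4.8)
The plan is to verify the two conditions of Definition~\ref{defn:Gammasol} for $w$ directly, by lifting any test function on $\mathbf{C}^{n-1}$ to one on $\mathbf{C}^n$ that is constant in the variable $z_n$ and then invoking the corresponding property of $v$. Throughout I identify the hyperplane $\{x_n=0\}\subset\mathbf{R}^n$ with $\mathbf{R}^{n-1}$, so that $\Gamma'=\Gamma\cap\{x_n=0\}$ coincides with the cone $\{(x_1,\ldots,x_{n-1}):(x_1,\ldots,x_{n-1},0)\in\Gamma\}$ appearing in Lemma~\ref{lem:gamma'}; in particular that lemma gives $\overline{\Gamma}\cap\{x_n=0\}=\overline{\Gamma'}$.

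First I would treat the subsolution condition. Suppose $h\in C^2(\mathbf{C}^{n-1})$ and $w-h$ has a local maximum at $z'=(z_1,\ldots,z_{n-1})$. Set $\tilde h(z_1,\ldots,z_n)=h(z_1,\ldots,z_{n-1})$. Since both $v$ and $\tilde h$ are independent of $z_n$, we have $(v-\tilde h)(z_1,\ldots,z_n)=(w-h)(z_1,\ldots,z_{n-1})$, so $v-\tilde h$ has a local maximum at $(z',0)$. As $v$ is a $\Gamma$-subsolution, $\lambda(\tilde h_{i\bar j}(z',0))\in\overline{\Gamma}$. The complex Hessian $\tilde h_{i\bar j}$ is block diagonal, with an $(n-1)\times(n-1)$ block equal to $h_{i\bar j}$ and a final diagonal entry $0$; hence its eigenvalue $n$-tuple is $(\lambda(h_{i\bar j}(z')),0)$ up to a permutation. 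Since $\overline{\Gamma}$ is symmetric this gives $(\lambda(h_{i\bar j}(z')),0)\in\overline{\Gamma}\cap\{x_n=0\}=\overline{\Gamma'}$, and dropping the last coordinate yields $\lambda(h_{i\bar j}(z'))\in\overline{\Gamma'}$, as needed.

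Next I would treat the solution part. Suppose $h\in C^2(\mathbf{C}^{n-1})$ and $w-h$ has a local minimum at $z'$. With the same lift $\tilde h$, the function $v-\tilde h$ has a local minimum at $(z',0)$, so since $v$ is a $\Gamma$-solution we get $\lambda(\tilde h_{i\bar j}(z',0))\in\mathbf{R}^n\setminus\Gamma$, i.e. $(\lambda(h_{i\bar j}(z')),0)\notin\Gamma$. If $\lambda(h_{i\bar j}(z'))$ were in $\Gamma'$, then by the definition of $\Gamma'$ the vector $(\lambda(h_{i\bar j}(z')),0)$ would lie in $\Gamma$, a contradiction. Hence $\lambda(h_{i\bar j}(z'))\in\mathbf{R}^{n-1}\setminus\Gamma'$, completing the verification.

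There is no serious obstacle here: the argument is bookkeeping once the correct lift is chosen. The two points requiring minor care are (a) the computation of the eigenvalues of the block-diagonal Hessian, which is elementary linear algebra, and (b) the passage between the eigenvalue tuple of $\tilde h_{i\bar j}$ and that of $h_{i\bar j}$, which uses the closure identity $\overline{\Gamma}\cap\{x_n=0\}=\overline{\Gamma'}$ from Lemma~\ref{lem:gamma'} for the subsolution step and just the definition of $\Gamma'$ for the solution step. The persistence of the local extrema under the lift is immediate because both $v$ and $\tilde h$ are independent of $z_n$.
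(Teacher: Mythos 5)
Your proof is correct and follows essentially the same route as the paper: lift the test function to one independent of $z_n$, observe that the local extremum persists at $(z',0)$, note that the lifted Hessian has an extra zero eigenvalue, and conclude via the identity $\overline{\Gamma}\cap\{x_n=0\}=\overline{\Gamma'}$ from Lemma~\ref{lem:gamma'} in the subsolution case and the definition of $\Gamma'$ in the solution case. The only difference is that you spell out the block-diagonal eigenvalue computation, which the paper leaves implicit.
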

\begin{proof}
  Suppose that $h$ is smooth and $w-h$ has a local maximum at a point
  $z=(z_1,\ldots,z_{n-1})$. Then $v - H$ has a local maximum at $Z =
  (z_1,\ldots, z_{n-1},0)$, where $H(z_1,\ldots,z_n) =
  h(z_1,\ldots,z_{n-1})$. Since $v$ is a $\Gamma$-subsolution, we have
  $\lambda(H_{i\bar j}(Z))\in \overline{\Gamma}$, and one eigenvalue
  is zero. Using
  Lemma~\ref{lem:gamma'} this  implies that
  $\lambda(h_{i\bar j}(z))\in \overline{\Gamma'}$, so  $w$ is a
  $\Gamma'$-subsolution. 

  Similarly if $h$ is smooth and $w-h$ has a local minimum at $z$,
  then $v-H$ has a local minimum at $Z$, which implies
  $\lambda(H_{i\bar j}(z))\in \mathbf{R}^n\setminus\Gamma$, and one
  eigenvalue is zero. So $\lambda(h_{i\bar j}(z))\in
  \mathbf{R}^{n-1}\setminus \Gamma'$. 
\end{proof}

Our goal in this section is the following result, generalizing the
Liouville theorem of Dinew-Kolodziej~\cite{DK12} (see also the
analogous result of Tosatti-Weinkove~\cite{TW13_1} for
$(n-1)$-plurisubharmonic functions). The proof follows their arguments
closely. 
\begin{thm}\label{thm:DK}
  Let $u:\mathbf{C}^n\to\mathbf{R}$ be a Lipschitz $\Gamma$-solution
  such that $|u| < C$ and $u$ has Lipschitz constant bounded by $C$. 
Then $u$ is constant. 
\end{thm}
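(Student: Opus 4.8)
The plan is to follow the blowup/dimension-reduction strategy of Dinew--Kolodziej~\cite{DK12}. Suppose $u$ is a bounded Lipschitz $\Gamma$-solution on $\mathbf{C}^n$ that is not constant. After mollifying (which, by Proposition~\ref{prop:viscprop} and the remark following it, keeps us in the class of $\Gamma$-subsolutions) and rescaling, we want to produce a $\Gamma$-solution on $\mathbf{C}^n$ that is independent of one of the variables, say $z_n$, so that Lemma~\ref{lem:gamma'2} lets us pass to a $\Gamma'$-solution on $\mathbf{C}^{n-1}$ with $\Gamma'$ again a proper symmetric convex cone containing $\Gamma_{n-1}$ (Lemma~\ref{lem:gamma'}). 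Iterating this reduction, we eventually reach $n=1$, where $\Gamma = \Gamma_1 = (0,\infty)$ and a bounded $\Gamma$-solution is a bounded harmonic function on $\mathbf{C}$, hence constant by the classical Liouville theorem; tracing back, this forces $u$ to have been constant to begin with. The base case could alternatively be handled already at the stage where $\Gamma$ becomes $\Gamma_m$ for some $m$, using that then the equation becomes the homogeneous complex Monge--Amp\`ere equation and the Dinew--Kolodziej Liouville theorem applies directly.

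The heart of the argument is the construction of the translation-invariant direction. First I would use boundedness to normalize: set $a = \sup u$, $b = \inf u$, and assume (subtracting a constant) that $b < 0 < a$ is impossible to improve, i.e. we cannot decrease the oscillation. The idea is to consider, for a sequence of points $p_k \to \infty$ chosen so that $u(p_k)$ approaches the supremum, the translates $u_k(z) = u(z + p_k)$. By the uniform Lipschitz and $L^\infty$ bounds, the $u_k$ are precompact in the local uniform topology, and by part (1) of Proposition~\ref{prop:viscprop} any limit $u_\infty$ is again a $\Gamma$-solution, still with the same bounds, and with $u_\infty(0) = \sup u_\infty = \sup u$. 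One then argues that $u_\infty$ must be constant in a suitable cone of directions emanating from $0$, or — following Dinew--Kolodziej more precisely — that the set where $u_\infty$ attains its maximum is an affine subspace; subharmonicity (guaranteed by \eqref{eq:G}) combined with the interior maximum principle forces the maximum set to be large. Restricting to a line inside the maximum set and translating along it produces a $\Gamma$-solution genuinely independent of one real, hence (after a further normalization using the invariance of the class under the complex structure) one complex variable.

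The main obstacle is exactly this step: upgrading "the maximum is attained at an interior point" to "the function is invariant in a whole direction." For the Laplacian this is immediate from the strong maximum principle, but here $F$ is only degenerate elliptic, so the strong maximum principle for $\Gamma$-subsolutions in the viscosity sense needs care; the workaround, as in \cite{DK12}, is to use the subharmonicity coming from \eqref{eq:G} to locate the maximum set, and then separately use the comparison principle (Lemma~\ref{lem:comp}) against explicit $\Gamma$-subsolutions/solutions (translates, and functions built from the cone structure) to propagate flatness. A secondary technical point is ensuring that the mollified functions, while smooth $\Gamma$-subsolutions, can be compared against the solution $u$ via Lemma~\ref{lem:comp} on appropriate balls — this requires matching boundary data, typically arranged by adding small quadratic corrections and shrinking domains, and is where one must be careful that $F$ is not uniformly elliptic so only the one-sided comparison (smooth subsolution $\le$ solution) is available. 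Once the invariant direction is produced, the dimension reduction via Lemmas~\ref{lem:gamma'} and~\ref{lem:gamma'2} is routine and the induction closes.
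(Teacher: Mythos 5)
Your skeleton --- induction on $n$, the reduction via Lemmas~\ref{lem:gamma'} and~\ref{lem:gamma'2}, the base cases $n=1$ and $\Gamma=\Gamma_n$, and the use of Proposition~\ref{prop:viscprop} for mollification and limits --- matches the paper. But the core mechanism you propose for producing the translation-invariant direction does not work, and it is not what Dinew--Kolodziej (or the paper) do. You suggest translating to points $p_k$ where $u(p_k)\to\sup u$, extracting a limit $u_\infty$ attaining its supremum at $0$, and then arguing that the "maximum set" is an affine subspace along which one can translate. Since every $\Gamma$-solution is subharmonic by \eqref{eq:G}, a limit of translates that attains its supremum at an interior point of $\mathbf{C}^n$ is forced by the classical strong maximum principle for subharmonic functions to be \emph{constant}; a constant limit neither contradicts the non-constancy of $u$ (translates of a non-constant bounded subharmonic function can perfectly well converge to a constant) nor provides any invariant direction to feed into Lemma~\ref{lem:gamma'2}. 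So the step you flag as the "main obstacle" is not merely delicate --- the route through near-maximum translates collapses there, and your proposed workaround ("use subharmonicity to locate the maximum set, then propagate flatness by comparison against explicit subsolutions") is not a proof outline: no such explicit comparison functions are identified, and none are available in this degenerate elliptic setting.

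What the paper actually does is a quantitative dichotomy that your sketch omits entirely. One fixes the functional $[u^2]_r(x)+[u]_\rho(x)-2u(x)$ (with Cartan's lemma giving $[u]_r,[u^2]_r\to \sup =1$) and asks whether the directional Dirichlet energy $\int_{B(x,r)}|\d_\xi u^{\epsilon}|^2$ can be made arbitrarily small over large balls at points where this functional is $\geq 4/3$. If yes (Case 1), the rescaled translates converge to a non-constant $\Gamma$-solution that is genuinely independent of $z_n$ --- non-constancy comes from the threshold $4/3$ together with $0\leq u\leq 1$, not from any maximum-set argument --- and the induction applies. If no (Case 2), the uniform lower bound on the gradient term in $(u^{\epsilon})^2_{i\bar j}=2u^\epsilon u^\epsilon_{i\bar j}+2u^\epsilon_i u^\epsilon_{\bar j}$ makes $[(u^\epsilon)^2]_r-c|z|^2$ a smooth $\Gamma$-subsolution, and comparing $\tfrac12\big([u^2]_r-c|z|^2+[u]_\rho\big)$ with the $\Gamma$-solution $u$ on the bounded set $U'$ contradicts Lemma~\ref{lem:comp}. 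The role of $u^2$ --- a subsolution-plus-strictly-positive-gradient object that converts "derivatives are not small" into a strict subsolution beating the comparison principle --- is the key idea missing from your proposal; without it (or a genuine substitute) the argument does not close.
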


\begin{proof} We use induction over $n$. If $n=1$, then $u$ is
  harmonic, while if $\Gamma =
  \Gamma_n$, then $u$ is plurisubharmonic. In both cases the result
  follows from the fact that a bounded subharmonic function on
  $\mathbf{C}$ is constant. We therefore assume that $n >
  1$ and $\Gamma \ne \Gamma_n$.

  Suppose that $u$ is non-constant, $|\nabla u| < c_0$, and $\inf u =
  0, \sup u=1$. For any function $v$ on $\mathbf{C}^n$, let
  \[ [v]_r(z) = \int_{\mathbf{C}^n} v(z+ rz') \eta(z') \beta^n(z'), \]
  where $\beta = \sum_i dz_i\wedge d\bar z_i$ and
  $\eta:\mathbf{C}^n\to\mathbf{R}$ is a smooth mollifier satisfying $\eta
  >  0$ in $B(0,1)$, $\eta = 0$ outside $B(0,1)$ and
  $\int_{\mathbf{C}^n} \eta\beta^n = 1$. We do this instead of taking
  averages over balls in order to obtain a smooth function. This is
  used in the comparison result Lemma~\ref{lem:comp}. As in
  \cite{DK12}, Cartan's Lemma implies that
  \[ \lim_{r\to\infty} [u^2]_r(z) = \lim_{r\to\infty} [u]_r(z) = 1, \]
  using that $u$ and $u^2$ are subharmonic.
  
  It will be helpful to regularize $u$ slightly, letting
  $u^\epsilon=[u]_\epsilon$ for $\epsilon > 0$. 
  Just as in \cite{DK12}, there are two cases to consider.

\bigskip 
\noindent{\bf Case 1.} In this case we assume that there is a $\rho >
0$, and sequences $\epsilon_k \to 0$, $x_k\in \mathbf{C}^n$, $r_k
\to\infty$ and unit vectors $\xi_k$ (of type $(1,0)$), such that
\[ \label{eq:case1}
[u^2]_{r_k}(x_k) + [u]_\rho(x_k) - 2u(x_k)\geq 4/3, \]
and
\[ \lim_{k\to\infty} \int_{B(x_k, r_k)} |\d_{\xi_k}u^{\epsilon_k}|^2\, \beta^n  =
  0.\]

In this case, translating and rotating $u$ to make $x_k$ the origin,
and $\d_{z^n} = \d_{\xi_k}$, we obtain a sequence of
$\Gamma$-solutions $u_k$, such that
\[ \begin{aligned} \,
  [u_k^2]_{r_k}(0) + [u_k]_\rho(0) - 2u_k(0) &\geq 4/3, \\
  \lim_{k\to\infty} \int_{B(0,r_k)} |\d_{z^n} u_k^{\epsilon_k}|^2\,\beta^n &= 0.
  \end{aligned}\]
The uniform Lipschitz bound implies that we can replace $u_k$ by a
subsequence, converging locally uniformly to $v:\mathbf{C}^n\to 
\mathbf{R}$, which by Lemma~\ref{prop:viscprop} is also a 
$\Gamma$-solution with the same Lipschitz bound.
 In addition we also have that $u_k^{\epsilon_k}\to
v$ locally uniformly, since $\epsilon_k\to 0$. Just as in \cite{DK12}, we find that $v$ is
independent of $z_n$, and so we can define a function $w :
\mathbf{C}^{n-1}\to\mathbf{R}$ by $w(z_1,\ldots, z_{n-1}) =
v(z_1,\ldots, z_{n-1},0)$, and by Lemma~\ref{lem:gamma'2}, $w$ is a
$\Gamma'$-solution with $\Gamma' = \Gamma\cap
\{x_n=0\}$. The induction hypothesis implies that $w$ is constant, and
so $v$ is constant, but
this contradicts \eqref{eq:case1}, using that $0\leq u\leq 1$. 

\bigskip
\noindent {\bf Case 2.} In this case, the assumption in Case 1 does
not hold, so for all $\rho > 0$, there is a constant $C_\rho > 0$ such
that if $\epsilon < C_\rho^{-1}$, $r > C_\rho$, $x\in \mathbf{C}^n$
and $\xi$ is a unit vector, we have
\[\label{eq:derlower}
 \int_{B(x,r)} |\d_\xi u^\epsilon|^2\, dz \geq
C_\rho^{-1}, \]
as long as 
\[ [u^2]_r(x) + [u]_\rho(x) - 2u(x) \geq 4/3. \]

We choose our origin so that $u(0) < 1/9$, and fix $\rho > 0$ such that
$[u]_\rho(0) > 3/4$. Then choose $r > C_\rho$ such that $[u^2]_r(0) >
3/4$ as well. Define the set
\[ U = \{ z\,:\, 2u(z) < [u^2]_r(z) + [u]_\rho(z) - 4/3\}, \]
so that $0\in U$.

\bigskip
\noindent{\bf Claim:} There is a constant $c > 0$ such that $[(u^\epsilon)^2]_r -
c|z|^2$ is a $\Gamma$-subsolution on $U$ for all $\epsilon < C_\rho^{-1}$.
We have
\[ (u^\epsilon)^2_{i\bar j} = 2u^{\epsilon} u^\epsilon_{i\bar j} +
2u^\epsilon_i u^\epsilon_{\bar j},\]
and so
\[ 
\begin{aligned} 
 \left[(u^\epsilon)^2\right]_{r, i\bar j}(z) &= \int_{B(0,1)}
  (u^\epsilon)^2_{i\bar j}(z + rz')\,\eta(z')\,\beta^n(z') \\
&=
  \int_{B(0,1)} 2u^\epsilon u^\epsilon_{i\bar j}(z +
  rz')\,\eta(z')\,\beta^n(z') \\
&\quad + \int_{B(0,1)} 2u^\epsilon_i u^\epsilon_{\bar j}(z
+ rz')\,\eta(z')\,\beta^n(z') \\
&= A_{i\bar j} + B_{i\bar j}.
\end{aligned}
\]
The subsolution
property of $u^\epsilon$ together with the fact that $\Gamma$ is
convex (i.e. the equation $F(D^2u)=0$ is concave) implies that the
matrix $A$ satisfies $\lambda(A_{i\bar j})\in \overline{\Gamma}$. Using
\eqref{eq:derlower}, we find that  $B_{i\bar j} \geq c_2 \beta$ for
some $c_2 > 0$, and can choose $c$ such that $(c |z|^2)_{i\bar j} \leq
c_2 \beta$. It then follows that
\[ \Big(\left[ (u^\epsilon)^2\right]_r - c |z|^2 \Big)_{i\bar j} \geq
A_{i\bar j}, \]
in the sense of inequalities for Hermitian matrices,
and so $[(u^\epsilon)^2]_r - c|z|^2$ is a $\Gamma$-subsolution. But this
converges locally uniformly to $[u^2]_r - c|z|^2$, which is therefore
also a $\Gamma$-subsolution. 

Consider now the set
\[ U' = \{ z\,:\, 2u(z) < [u^2]_r(z) - c|z|^2 + [u]_\rho(z) -
4/3\}, \]
which satisfies $U' \subset U$, and in addition $U'$ is bounded, by
the assumption that $|u| \leq 1$. This contradicts the comparison
result Lemma~\ref{lem:comp}, since $u$ is a 
$\Gamma$-solution and as we have seen, $[u^2]_r - c|z|^2 + [u]_p$ is a
$\Gamma$-subsolution on the set $U$. 
\end{proof}

\section{Blowup argument}\label{sec:blowup}

We now complete the proof of Theorem~\ref{thm:main} using a blowup argument analogous
to that in \cite{DK12}, using the Liouville-type theorem,
Theorem~\ref{thm:DK}. 

\begin{proof}[Proof of Theorem~\ref{thm:main}]
  Suppose that as in the introduction, $(M,\alpha)$ is Hermitian,
  $\chi$ is a real $(1,1)$-form, and $g = \chi + \ddbar u$ satisfies $F(A)
  = h$, where $A^{ij} = \alpha^{j\bar p}g_{i\bar p}$. We use
  Proposition~\ref{prop:HMW}, together with a contradiction argument
  to obtain an estimate for $|\nabla u|$, depending on the $C^2$-norms
  of $\alpha, \chi, h$ and the subsolution $\underline{u}$,
  which in turn will imply an estimate for $\Delta u$. The
  $C^{2,\alpha}$-estimate follows from this by the Evans-Krylov
  theory. 

  To argue by contradiction, suppose that $F(A) = h$, but
  \[ \label{eq:gradbound}\sup_M |\nabla u|^2 = |\nabla u(p)|^2 = N, \]
  for some large $N$. Proposition~\ref{prop:HMW} implies that we have
  \[ \label{eq:Dest} |\d\overline{\d} u|_\alpha \leq CN \]
  for a fixed constant $C$. Let $\widetilde{\alpha} = N\alpha$.
  We can choose coordinates $z_1,\ldots, z_n$ centered at $p$, such
  that in these coordinates $\widetilde{\alpha}, \chi, h$ satisfy
  \[  \begin{aligned} \widetilde{\alpha}_{i\bar j} &= \delta_{i\bar j}
    + O(N^{-1}|z|), \\
    \chi_{i\bar j} &= O(N^{-1}), \\
    h &= h(p) + O(N^{-1}|z|),
    \end{aligned}\]
  and the $z_i$ are defined for $|z_i| < O(N^{1/2})$.  The inequality
  \eqref{eq:Dest} implies that $|\d\overline{\d} u|_{\widetilde{\alpha}} \leq C$,
  and since $\widetilde{\alpha}$ is approximately Euclidean on the
  ball of radius $O(N^{1/2})$, we obtain a uniform bound
  \[ \label{eq:c1alpha}\Vert u\Vert_{C^{1,\alpha}} < C', \]
  on this ball, in these coordinates. The equation $F(A) = h$ implies that
  \[ \label{eq:fNl} f\left(N\lambda\big[ \widetilde{\alpha}^{j\bar p} (\chi_{i\bar p} +
  u_{i\bar p})\big]\right) = h(z), \]
  where $f:\Gamma\to\mathbf{R}$ defines the operator $F$. Since
  we have a fixed bound on $u_{i\bar p}$, while $\chi_{i\bar p}$ is
  going to zero and $\widetilde{\alpha}^{j\bar p}$ is approaching the
  identity matrix, we obtain
  \[ \label{eq:latilde} \lambda\big[ \widetilde{\alpha}^{j\bar p} (\chi_{i\bar p} +
  u_{i\bar p})\big] = \lambda(u_{i\bar j}) + O(N^{-1}|z|).\]

  Suppose now that we have a sequence of such $\alpha, \chi, h$ and
  the subsolution $\underline{u}$ all bounded in $C^2$, and with $|u|$ uniformly
  bounded so that
  Proposition~\ref{prop:HMW} can be applied uniformly, and the
  constant $N$ in \eqref{eq:gradbound} gets larger and larger. The
  coordinates $z_i$ will then be defined on larger and larger balls,
  and using the estimate \eqref{eq:c1alpha} we can choose a
  subsequence converging uniformly in $C^{1,\alpha}$ to a limit
  $v:\mathbf{C}^n\to\mathbf{R}$. By the construction we will have
  global bounds $|v|, |\nabla v| < C$, and $|\nabla v(0)| = 1$.

  The proof will be completed by showing that $v$ is a $\Gamma$-solution, in the sense of
  Definition~\ref{defn:Gammasol}, since that will contradict
  Theorem~\ref{thm:DK}.
  To see this, suppose first that we have a
  $C^2$-function $\psi$, such that $\psi \geq v$, and $\psi(z_0) =
  v(z_0)$ for some point $z_0$. We need to show that
  $\lambda(\psi_{i\bar j}(z_0))\in \overline{\Gamma}$. By the construction
  of $v$, for any $\epsilon > 0$ we can find a $u$ as above,
  corresponding to a sufficiently large $N$, a number $a$ with $|a| < \epsilon$,
  and a point $z_1$ with $|z_1 - z_0| < \epsilon$, such that 
  \[ \psi + \epsilon |z - z_0|^2 + a \geq u \text{ on $B_1(z_0)$, with equality at }
  z_1. \]
  This implies that $\psi_{i\bar j}(z_1) + \epsilon \delta_{i\bar j}
  \geq u_{i\bar j}(z_1)$. 
  From \eqref{eq:latilde}, and the fact that $\Gamma_n\subset \Gamma$
  we obtain that for large $N$, $\lambda\big[\psi_{i\bar j}(z_1)\big]$ will be within
  $2\epsilon$ of $\Gamma$. Letting $\epsilon \to 0$ we find that
  $\lambda\big[\psi_{i\bar j}(z_0)\big]\in \overline{\Gamma}$. 

Suppose now that we have a $C^2$-function $\psi$ such that $\psi \leq
v$ and $\psi(z_0) = v(z_0)$. We need to show $\lambda\big[\psi_{i\bar
  j}(z_0)\big]\in \mathbf{R}^n\setminus\Gamma$. As above, for any
$\epsilon > 0$ we can find a $u$ corresponding to large $N$, and $a,
z_1$, such that
\[ \psi - \epsilon |z-z_0|^2 + a \leq u \text{ on $B_1(z_0)$, with
  equality at } z_1. \]
This implies that $\psi_{i\bar j}(z_1) - \epsilon \delta_{i\bar j}
\leq u_{i\bar j}(z_1)$. This implies that if  $\lambda(\psi_{i\bar j}(z_1) -
3\epsilon\delta_{i\bar j}) \in \Gamma$, then we will have
$\lambda(u_{i\bar j}) \in \Gamma + 2\epsilon\mathbf{1}$. Using
\eqref{eq:latilde}, if $N$ is sufficiently large, we will have
\[ \lambda\big[\widetilde{\alpha}^{j\bar p}(\chi_{i\bar p} + u_{i\bar
  p})\big] \in \Gamma + \epsilon\mathbf{1}. \]
Finally, by Lemma~\ref{lem:struct} part (a), our assumptions for $f$
in the introduction imply  that
if $N$ is sufficiently large, then we cannot have \eqref{eq:fNl},
since we have a fixed bound for $h$, which must be less than
$\sup_\Gamma f$. It follows that we cannot have $\lambda(\psi_{i\bar
  j}(z_1)) \in \Gamma + 3\epsilon\mathbf{1}$. Letting $\epsilon\to 0$
we will have $z_1\to z_0$, and so $\lambda(\psi_{i\bar j}(z_0)) \in
\mathbf{R}^n\setminus\Gamma$. This completes the proof that $v$ is a
$\Gamma$-solution.   
\end{proof}

\section{Hessian quotient equations}\label{sec:ex}
In this section we prove Corollary~\ref{cor:hessquot} as an application of
Theorem~\ref{thm:main}, and we also discuss equations for
$(n-1)$-plurisubharmonic functions. As we mentioned in the
introduction it is somewhat difficult to formulate very general
existence results, in contrast to the case of the Dirichlet problem in
\cite{CNS3}, because on a compact manifold the constant functions are
not in the image of the linearized operator of
Equation~\eqref{eq:eqn}. In particular if we consider only equations
with constant right hand side, $F(A) = c$, then a solution can only
exist for a unique constant. If we do not know a priori what the right
constant is, then we cannot ensure that along a suitable continuity
path we have a $\mathcal{C}$-subsolution for the whole path. 
This issue does not arise when any admissible function is a
$\mathcal{C}$-subsolution, which is the case for
the complex Monge-Amp\`ere and Hessian equations for instance. We
therefore have the following. 
\begin{prop}\label{prop:Hessian}
  Let $(M,\alpha)$ be compact, Hermitian, let $\chi$ be a
  $k$-positive real $(1,1)$-form on $M$ and let $1 \leq k \leq n$.
  Given any smooth function $H$ on $M$, we can find a constant $c$ and
  a function $u$, such that the form $\omega =
  \chi + \ddbar u$ satisfies the equation
  \[ \label{eq:hessianeqn}\omega^k \wedge \alpha^{n-k} = e^{H+c} \alpha^n. \]
\end{prop}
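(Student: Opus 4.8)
The plan is to run the continuity method, with Theorem~\ref{thm:main} providing the a priori estimates. Write $F(A) = \log\sigma_k(\lambda(A))$ on the cone $\Gamma = \Gamma_k$ of $k$-positive vectors, so that \eqref{eq:hessianeqn} is equivalent to $F(\chi + \ddbar u) = H + c + \log\binom{n}{k}$. First I would verify the structural hypotheses (i)--(iii) for $f = \log\sigma_k$: we have $f_i = \sigma_{k-1}(\lambda|i)/\sigma_k > 0$ on $\Gamma_k$, and $\log\sigma_k$ is concave there (a standard fact: $\sigma_k^{1/k}$ is positive, concave and $1$-homogeneous on $\Gamma_k$, and $\log$ of such a function is concave, since $\log g(\tfrac{x+y}{2}) \ge \log\tfrac{g(x)+g(y)}{2} \ge \tfrac12(\log g(x)+\log g(y))$), giving (i); by the Maclaurin inequalities every point of $\d\Gamma_k$ has $\sigma_k = 0$, so $\sup_{\d\Gamma} f = -\infty$ and (ii) holds trivially; and $f(t\lambda) = k\log t + f(\lambda)\to +\infty$, so $\sup_\Gamma f = +\infty$ and (iii) holds. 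Moreover $\lim_{\lambda_n\to\infty}\log\sigma_k(\lambda_1,\dots,\lambda_n) = +\infty$, which is clear along $\lambda' = \mathbf{1}$ and then holds for all $\lambda'\in\Gamma_\infty$ by the dichotomy in Remark~\ref{rem:subsol2}; hence by Remark~\ref{rem:subsol2} \emph{every} admissible function is a $\mathcal{C}$-subsolution of $F(A)=h$ for \emph{every} right-hand side $h$. In particular, since $\chi$ is $k$-positive, $\underline{u} = 0$ is a $\mathcal{C}$-subsolution along the whole continuity path below.

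For the path, set $H_0 = \log\big(\chi^k\wedge\alpha^{n-k}/\alpha^n\big)$, which is a smooth function because $\chi$ is $k$-positive, and for $t\in[0,1]$ seek a constant $c_t$ and a function $u_t$ with $\sup_M u_t = 0$ such that $\omega_t = \chi + \ddbar u_t$ is $k$-positive and $\omega_t^k\wedge\alpha^{n-k} = e^{\,tH + (1-t)H_0 + c_t}\,\alpha^n$. Let $T\subset[0,1]$ be the set of such $t$; then $0\in T$, taking $u_0 = 0$ and $c_0 = 0$. For openness of $T$ I would apply the implicit function theorem to the map $(c,u)\mapsto \log\big((\chi+\ddbar u)^k\wedge\alpha^{n-k}/\alpha^n\big) - c - tH - (1-t)H_0$ on $\mathbf{R}\times\{u\in C^{2,\alpha} : \int_M u\,\alpha^n = 0\}$. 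Its linearization at a solution is $(\dot c,w)\mapsto F^{ij}w_{i\bar j} - \dot c$, with $(F^{ij})$ positive definite since $\omega_t\in\Gamma_k$. The operator $w\mapsto F^{ij}w_{i\bar j}$ has no zeroth-order term, so by the maximum principle its kernel is the constants; being a scalar elliptic operator on a compact manifold it is Fredholm of index zero, hence its cokernel is one-dimensional and represented by a strictly positive density. Since $-\dot c$ pairs nontrivially against that density, the full linearization is an isomorphism onto $C^{0,\alpha}$, so $T$ is open.

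Closedness is where Theorem~\ref{thm:main} enters, and first I would bound $c_t$ independently of $t$. At a maximum point of $u_t$ we have $\ddbar u_t\le 0$, so $\omega_t\le\chi$ pointwise; both forms are $k$-positive there, and since $\sigma_k$ increases along coordinatewise-increasing segments inside $\Gamma_k$ this gives $\omega_t^k\wedge\alpha^{n-k}\le\chi^k\wedge\alpha^{n-k}$ at that point, whence $c_t\le\max_M|H - H_0|$; evaluating at a minimum point of $u_t$ instead gives the matching lower bound, so $|c_t|\le\max_M|H - H_0|$. Then the right-hand sides $tH + (1-t)H_0 + c_t$ are bounded in $C^2$ uniformly in $t$, $\underline{u} = 0$ is a $\mathcal{C}$-subsolution by the first paragraph, and $\sup_M u_t = 0$, so Theorem~\ref{thm:main} yields $\Vert u_t\Vert_{C^{2,\alpha}}\le C$; differentiating the equation and bootstrapping with Schauder estimates then gives $\Vert u_t\Vert_{C^m}\le C_m$ for every $m$. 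A standard compactness argument now produces, for any $t_j\to t^*$ in $T$, a smooth limit solving the equation at $t^*$; the limiting form lies in $\overline{\Gamma_k}$ with $\sigma_k > 0$, hence is $k$-positive (again because $\sigma_k$ vanishes on $\d\Gamma_k$), so $t^*\in T$ and $T$ is closed. Therefore $T = [0,1]$, and $t = 1$ gives the desired $c = c_1$ and $u = u_1$. The only step needing real care beyond citing Theorem~\ref{thm:main} is the openness argument, precisely because the constant $c$ must be carried as an unknown: one has to use that this extra parameter exactly accounts for the one-dimensional cokernel of $F^{ij}\d_i\d_{\bar j}$ on the compact manifold.
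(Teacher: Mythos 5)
Your proposal is correct and follows essentially the same route as the paper: the same choice $f=\log\sigma_k$ on $\Gamma_k$, the same use of Remark~\ref{rem:subsol2} to see that $\underline{u}=0$ is a $\mathcal{C}$-subsolution for every right-hand side, the same continuity path $tH+(1-t)H_0+c_t$ with openness handled by adding the constant to compensate the cokernel of the linearization, and the same bound on $c_t$ from the maximum and minimum of $u_t$ before invoking Theorem~\ref{thm:main}. You merely spell out details (concavity of $\log\sigma_k$, the Fredholm/cokernel argument, the monotonicity of $\sigma_k$ used for the $c_t$ bound) that the paper leaves to citations or brief remarks.
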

Note that for $k=1$ this is the Poisson equation whose solution is
standard, while for $k=n$ it
is the complex Monge-Amp\`ere equation, which was solved on K\"ahler
manifolds by Yau~\cite{Yau78} and by Tosatti-Weinkove~\cite{TW10} on
Hermitian manifolds. For
$1 < k < n$ it was solved by Dinew-Kolodziej~\cite{DK12} on K\"ahler
manifolds, and by Sun~\cite{Sun14_2} on Hermitian manifolds (see also
Zhang~\cite{DZ15}).  For the
reader's convenience we present the proof here. 
\begin{proof}
  We can write the equation in the form $F(A) = h$, for a positive
  function $H$ depending on $h$, where $F$ is defined by the function
  \[ f = \log \sigma_k \]
  on the $k$-positive cone $\Gamma_k$:
\[ \Gamma_k = \{\lambda\,:\, \sigma_1(\lambda),\ldots,
\sigma_k(\lambda) > 0\}. \]
  This satisfies the structural conditions that we use (see
  Spruck~\cite{Spruck05}). In addition $\underline{u}=0$ is a
  subsolution if $\chi$ is any $k$-positive form. We can see this
  using Remark~\ref{rem:subsol2} together with the fact that for any
  $\mu=(\mu_1,\ldots, \mu_n)\in \Gamma_k$ we have
  \[ \lim_{t\to\infty} \sigma_k(\mu_1,\ldots, \mu_{n-1},  t) = \infty. \]

  We therefore have great flexibility in setting up a continuity
  method. For instance we can let $H_0$ be the function such that
  \[ \chi^k \wedge \alpha^{n-k} = e^{H_0} \alpha^n, \]
  and then solve the family of equations
  \[ \label{eq:cont2}
  \log \frac{(\chi + \ddbar u_t)^k\wedge \alpha^{n-k}}{\alpha^n} =
  tH + (1-t)H_0 + c_t, \]
  for $t\in [0,1]$, where $c_t$ are constants. 
  For $t=0$ a solution is $u_0=0, c_0=0$. Openness follows from the
  fact that if $L$ denotes the linearized operator at any $t\in [0,1]$, then the operator
  \[  \begin{aligned}   C^{k,\beta} \times \mathbf{R} &\to
    C^{k-2,\beta} \\
    (v, c) &\mapsto Lv + c
  \end{aligned}\]
  is surjective. To obtain a priori estimates for the solutions we can
  first obtain bounds for $c_t$ from above and below by looking at the
  points where $u_t$ achieves its maximum and minimum in
  Equation~\eqref{eq:cont2}. Given this, Theorem~\ref{thm:main} gives
  higher order estimates. 
\end{proof}

We next focus on the Hessian quotient equation
\[ \label{eq:hqeq}\omega^l\wedge \alpha^{n-l} = c\omega^k \wedge \alpha^{n-k}, \]
where $(M,\alpha)$ is K\"ahler, $1\leq l < k \leq n$, and $\omega =
\chi + \ddbar u$ with a fixed, closed background form $\chi$. We assume that
the constant $c$ is chosen so that
\[ \label{eq:cdef} c = \frac{\int_M \chi^l\wedge \alpha^{n-l}}{\int_M \chi^k\wedge
  \alpha^{n-k}}. \]
The standard way of writing our equation would be to use the function
$g = (\sigma_k / \sigma_l)^{1/(k-l)}$ on $\Gamma_k$. 
This function satisfies the required conditions (see
Spruck~\cite{Spruck05}),  however it seems not to be well adapted to setting up a continuity
method. Instead we will write the equation in the form
\[ -\frac{\omega^l \wedge \alpha^{n-l}}{\omega^k \wedge \alpha^{n-k}}
= -c, \]
which is the same as $F(A) = -c$ with $F$ defined by the function
\[ f = -\frac{\binom{n}{l}^{-1} \sigma_l}{\binom{n}{k}^{-1} \sigma_k}. \]
Note that again $f$ is concave, since $f = -g^{-(k-l)}$. We will use a
continuity method interpolating between this, and the Hessian
equation, given by the function 
\[ f_0 = -\frac{1}{\binom{n}{k}^{-1} \sigma_k}. \]
In other words, we will try to solve the equation
\[ \label{eq:conteq}
  t \frac{\omega^l\wedge \alpha^{n-l}}{\omega^k \wedge \alpha^{n-k}} +
  (1-t) \frac{\alpha^n}{\omega^k \wedge \alpha^{n-k}} = c_t,
  \]
  for $t\in [0,1]$. 

Corollary~\ref{cor:hessquot} follows from the following.
\begin{prop}
  Suppose that $\chi$ is a closed $k$-positive form, satisfying
  \[ \label{eq:aa2} kc\chi^{k-1}\wedge \alpha^{n-k} - l\chi^{l-1}\wedge \alpha^{n-l}
  > 0, \]
  in the sense of positivity of $(n-1,n-1)$-forms, where $c$ is
  defined by \eqref{eq:cdef}. The Equation~\eqref{eq:conteq} has a
  solution for all $t\in [0,1]$, for suitable $c_t$, such that
  $c_1=c$. 
\end{prop}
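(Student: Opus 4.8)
The plan is to run the continuity method along \eqref{eq:conteq}, solving for the pair $(u_t,c_t)$, with $\underline{u}=0$ (that is, the background form $\chi$) serving as a $\mathcal{C}$-subsolution at every $t\in[0,1]$, so that Theorem~\ref{thm:main} supplies the a priori estimates. Write $f_t = tf + (1-t)f_0$ for the symmetric function on the cone $\Gamma_k$ defining \eqref{eq:conteq}, so that \eqref{eq:conteq} is the equation $F_t(A)=-c_t$. Each $f_t$ satisfies the structural hypotheses of the introduction: it is concave, being a convex combination of the concave functions $f=-g^{-(k-l)}$ (with $g=(\sigma_k/\sigma_l)^{1/(k-l)}$ concave on $\Gamma_k$) and $f_0$; one has $f_{t,i}>0$ by the corresponding facts for $f$ and $f_0$ (see Spruck~\cite{Spruck05}); and conditions (ii), (iii) hold because $f_t\to-\infty$ along $\partial\Gamma_k$ (there $\sigma_k\to 0^+$ by Maclaurin's inequalities, while the quotients $\sigma_l/\sigma_k$, $1/\sigma_k$ blow up), and $f_t(s\lambda)\to 0 = \sup_{\Gamma_k}f_t$ as $s\to\infty$ for $\lambda\in\Gamma_k$, since $l<k$.

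First I would identify the constants: since $(M,\alpha)$ is K\"ahler and $\chi$ is closed, for $\omega=\chi+\ddbar u$ one has $\int_M\omega^p\wedge\alpha^{n-p}=\int_M\chi^p\wedge\alpha^{n-p}$ by Stokes' theorem, so integrating \eqref{eq:conteq} over $M$ forces
\[
c_t = \frac{t\int_M\chi^l\wedge\alpha^{n-l} + (1-t)\int_M\alpha^n}{\int_M\chi^k\wedge\alpha^{n-k}}.
\]
This is a convex combination of $c_0 = \int_M\alpha^n/\int_M\chi^k\wedge\alpha^{n-k}>0$ and $c_1 = c$ (as in \eqref{eq:cdef}), so $c_t$ is positive and bounded above and below along the path, and $c_1=c$ as required. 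At $t=0$, \eqref{eq:conteq} is the Hessian equation $\omega^k\wedge\alpha^{n-k}=c_0^{-1}\alpha^n$, which has a solution by Proposition~\ref{prop:Hessian}, providing the base point of the continuity path.

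The crux is that $\underline{u}=0$ is a $\mathcal{C}$-subsolution of $F_t(A)=-c_t$ for every $t\in[0,1]$. By Remark~\ref{rem:subsol2} this is the requirement that $\lim_{s\to\infty}f_t(\mu+s\mathbf{e}_i)>-c_t$ for all $i$, where $\mu=\mu(x)$ denotes the eigenvalues of $\alpha^{i\bar p}\chi_{j\bar p}$ at $x$ (which lies in $\Gamma_k$ by $k$-positivity of $\chi$). Letting one variable tend to infinity, $\sigma_m(\mu+s\mathbf{e}_i)/s\to\sigma_{m-1}(\mu_{\hat i})$, so
\[
\lim_{s\to\infty}f_t(\mu+s\mathbf{e}_i) = -t\,\frac{\binom{n}{l}^{-1}\sigma_{l-1}(\mu_{\hat i})}{\binom{n}{k}^{-1}\sigma_{k-1}(\mu_{\hat i})} =: -tQ_i(x),
\]
with $\sigma_{k-1}(\mu_{\hat i})>0$ since $\mu\in\Gamma_k$. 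On the other hand, in coordinates where $\alpha$ is the identity and $\chi=\mathrm{diag}(\mu_1,\ldots,\mu_n)$, the $i$-th component of the $(n-1,n-1)$-form $kc\chi^{k-1}\wedge\alpha^{n-k} - l\chi^{l-1}\wedge\alpha^{n-l}$ is a fixed positive multiple of $kc(k-1)!(n-k)!\,\sigma_{k-1}(\mu_{\hat i}) - l(l-1)!(n-l)!\,\sigma_{l-1}(\mu_{\hat i})$; hence hypothesis \eqref{eq:aa2} says precisely that $Q_i(x)<c$ for all $i$ and all $x$, and by compactness $\sup_M\max_iQ_i<c$. Since $c_t=tc+(1-t)c_0$ with $c_0>0$, it follows that $tQ_i(x)<c_t$ for all $t\in[0,1]$ (for $t>0$ multiply $Q_i<c$ by $t$ and add $(1-t)c_0\geq 0$; for $t=0$ use $c_0>0$), which is exactly the $\mathcal{C}$-subsolution inequality $-tQ_i(x)>-c_t$.

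Given all this, the continuity argument closes as in the proof of Proposition~\ref{prop:Hessian}: openness holds because, just as there, the map $(v,b)\mapsto Lv+b$ from $C^{k,\beta}\times\mathbf{R}$ to $C^{k-2,\beta}$ is surjective, where $L$ is the linearization (a second order elliptic operator with no zeroth order term, whose image has codimension one and misses the nonzero constants); and closedness follows because Theorem~\ref{thm:main} applies uniformly along the path, since $\underline{u}=0$ is a $\mathcal{C}$-subsolution for every $t$, the data $f_t,c_t,\alpha,\chi$ are uniformly controlled, and admissibility of the solutions is preserved as $-c_t$ stays in $(\sup_{\partial\Gamma_k}f_t,\sup_{\Gamma_k}f_t)$ (cf.\ Lemma~\ref{lem:struct}), giving a uniform $C^{2,\alpha}$ bound and then, via Schauder estimates, a uniform $C^{k,\beta}$ bound. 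Hence the set of $t$ for which \eqref{eq:conteq} is solvable is open, closed and nonempty, so equals $[0,1]$; the solution at $t=1$ solves \eqref{eq:hqeq}. I expect the main obstacle to be the subsolution step: it is not automatic that a $\mathcal{C}$-subsolution for the endpoint equation remains one along the path, and what makes it work here is the choice to interpolate with the Hessian equation---for which $\underline{u}=0$ is a $\mathcal{C}$-subsolution as soon as $c_0>0$---which forces $c_t\geq tc$ so that the open condition \eqref{eq:aa2} is never lost.
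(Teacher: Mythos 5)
Your proposal is correct and follows essentially the same route as the paper: base point at $t=0$ via Proposition~\ref{prop:Hessian}, openness as there, the integral identity pinning down $c_t$ (the paper only records the consequence $c_t\geq tc$), and the verification via Remark~\ref{rem:subsol2} that $\underline{u}=0$ stays a $\mathcal{C}$-subsolution because \eqref{eq:aa1} translates into $kc_t\chi^{k-1}\wedge\alpha^{n-k}-lt\chi^{l-1}\wedge\alpha^{n-l}>0$, which follows from \eqref{eq:aa2}, $k$-positivity and $c_t\geq tc$, so that Theorem~\ref{thm:main} supplies the closedness estimates. Your extra checks (concavity of $f_t$, conditions (ii)--(iii), the explicit $c_t=tc+(1-t)c_0$) are consistent with, and slightly more detailed than, the paper's treatment.
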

\begin{proof}
  For $t=0$ we can solve the equation using
  Proposition~\ref{prop:Hessian}, and openness follows in the same way
  as in the proof of that proposition. It remains to obtain a priori
  estimates.

  Note first of all, that by integrating \eqref{eq:conteq} on $M$ with
  respect to $\omega^k\wedge\alpha^{n-k}$, we find that $c_t \geq tc$
  for $t\in [0,1]$. Writing Equation~\eqref{eq:conteq} in the form
  \[ f_t(\lambda) = -t\frac{\binom{n}{l}^{-1}\sigma_l}{\binom{n}{k}^{-1}\sigma_k} - (1-t)
  \frac{1}{\binom{n}{k}^{-1}\sigma_k} = -c_t, \]
  the equation satisfies our structural assumptions, and we claim that
  $\underline{u}=0$ is a $\mathcal{C}$-subsolution for it. For this,
  let $\mu_i$ denote the eigenvalues of $\alpha^{j\bar p}\chi_{j\bar
    p}$. By Remark~\ref{rem:subsol2} we just need to check that if
  $\mu'$ denotes any $(n-1)$-tuple from the $\mu_i$, then
  \[ \lim_{R\to\infty} f_t(\mu', R) > -c_t, \]
  which by the formula for $f_t$ means
  \[ \label{eq:aa1} -t
  \frac{\binom{n}{l}^{-1}\sigma_{l-1}(\mu')}{\binom{n}{k}^{-1}\sigma_{k-1}(\mu')}
  > -c_t. \]
  To rewrite this in terms of the forms $\chi, \alpha$, note that at
  any given point, if we restrict ourselves to the subspace of the
  tangent space of $M$ spanned by the eigenvectors corresponding to
  $\mu'$, then on this subspace
  \[ \sigma_{i-1}(\mu') = \binom{n-1}{i-1}\frac{\chi^{i-1}\wedge
  \alpha^{n-i}}{\alpha^{n-1}} \]
  for all $i$. Applying this to $i=l, k$ we find that \eqref{eq:aa1}
  is equivalent to the inequality
  \[ kc_t \chi^{k-1}\wedge \alpha^{n-k} - l t \chi^{l-1}\wedge
  \alpha^{n-l} > 0\]
  for $(n-1,n-1)$-forms. Since $\chi$ is $k$-positive and $c_t \geq
  tc$, this follows from \eqref{eq:aa2}. Theorem~\ref{thm:main} will
  then give uniform estimates for $t$ in any compact interval $[c, 1]$
  for $ c> 0$.  
\end{proof}

It is an interesting problem to
find geometric conditions which ensure the existence of a
$\mathcal{C}$-subsolution. In analogy with the conjecture in
\cite{LSz13} regarding the case when $k=n, l=n-1$, it is natural to
conjecture the following.

\begin{conj}\label{conj:hessq}
  Suppose that $\chi$ is a closed $k$-positive form.
  Then we can find a $k$-positive $\chi'\in [\chi]$ satisfying the
  inequality \eqref{eq:aa2} with $\chi'$ instead of $\chi$ if and only
  if for all subvarieties $V\subset M$ of dimension $p=n-l, \ldots, n-1$
  we have
  \[ \int_V c\frac{k!}{(k-n+p)!} \chi^{k-n+p}\wedge \alpha^{n-k} -
  \frac{l!}{(l-n+p)!} \chi^{l-n+p} \wedge \alpha^{n-l} > 0. \]
\end{conj}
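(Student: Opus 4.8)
\medskip
\noindent We outline a strategy towards Conjecture~\ref{conj:hessq} and indicate the main obstacle, treating the two implications separately.

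\emph{Necessity.} Suppose a $k$-positive $\chi' = \chi + \ddbar\underline{u}$ satisfies \eqref{eq:aa2}. Since the integrals in the conjecture depend only on $[\chi]$ and $[\alpha]$, we may evaluate them with $\chi'$ in place of $\chi$. Fix a subvariety $V$ of dimension $p$ with $n-l \leq p \leq n-1$; pulling back by a resolution we may take $V$ smooth. At a point of $V$, diagonalise $\chi'$ with respect to $\alpha$, with eigenvalues $\mu_1,\ldots,\mu_n$; by Cauchy interlacing the eigenvalues of the restriction $\chi'|_{T_xV}$ with respect to $\alpha|_{T_xV}$ lie in $[\min_i\mu_i,\max_i\mu_i]$. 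Combining this with the pointwise content of \eqref{eq:aa2} --- which in terms of the $\mu_i$ bounds the relevant combinations of $\sigma_{k-1}$ and $\sigma_{l-1}$ evaluated on all $(n-1)$-tuples from $\mu$ --- one checks, exactly as for $k=n,\,l=n-1$ in \cite{LSz13}, that the $(p,p)$-form $c\,\tfrac{k!}{(k-n+p)!}\chi'^{k-n+p}\wedge\alpha^{n-k} - \tfrac{l!}{(l-n+p)!}\chi'^{l-n+p}\wedge\alpha^{n-l}$ is pointwise positive on $V$. The restriction $p\geq n-l$ is exactly what makes this work: it ensures that $\chi'^{l-n+p}$ is defined and that $\chi'$ controls enough of the restricted spectrum. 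Integrating over $V$ gives the strict inequality; the case $V=M$ is excluded since there equality holds by the normalisation \eqref{eq:cdef} of $c$.

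\emph{Sufficiency.} This is the substantial direction, and I would follow the circle of ideas around the Demailly--Paun theorem \cite{DP04} and its toric refinement \cite{CSz14}. The first step is to realise the sought $\chi'$ variationally: introduce a functional $\mathcal{J}$ on the space of $k$-positive potentials for $[\chi]$, modelled on the functionals for the $J$-equation and the inverse $\sigma_k$-equations \cite{SW04,FLM11}, whose critical points solve \eqref{eq:hqeq} (equivalently \eqref{eq:conteq} at $t=1$) and which is designed so that a $k$-positive $\chi'\in[\chi]$ satisfying \eqref{eq:aa2} exists if and only if $\mathcal{J}$ is proper modulo constants. Granting this equivalence, it remains to show that the numerical conditions of Conjecture~\ref{conj:hessq} imply properness of $\mathcal{J}$. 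One argues by contradiction: along a non-coercive minimising sequence the normalised potentials converge weakly to a positive current $T\in[\chi]$ which is not a smooth $k$-positive form and which concentrates mass along a proper analytic subvariety $V\subset M$; the key point is then a structure result identifying $V$ and computing the mass it carries purely in terms of the intersection numbers $\int_V\chi^a\wedge\alpha^b$, in such a way that the failure of properness forces the left-hand side of the inequality in Conjecture~\ref{conj:hessq} to vanish or be negative for $p=\dim V$. This would complete the proof.

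\emph{The main obstacle.} The hard step is precisely this structure theorem for the extremal current $T$: even in the special case $k=n,\,l=n-1$ of \cite{LSz13} it is known only on toric manifolds, where it was established in \cite{CSz14}. A realistic first target is therefore the toric case for general $l<k$: with $\alpha,\chi$ torus-invariant, $k$-positive torus-invariant potentials become convex functions on the moment polytope $\Delta$, the functional $\mathcal{J}$ becomes an explicit quantity built from the support functions of $[\chi]$ and $[\alpha]$, and the relevant subvarieties $V$ correspond to the faces of $\Delta$; the conjecture should then reduce, face by face, to an inequality between an integral over $\Delta$ and an integral over the face, provable by estimates as in \cite{CSz14}. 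Even this toric statement is a nontrivial convex-geometric problem, and the general K\"ahler case seems to require genuinely new input on the regularity of minimising currents for functionals of this type.
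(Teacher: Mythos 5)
The statement you are addressing is stated in the paper as a \emph{conjecture}, not a theorem: the paper offers no proof, and explicitly records that the only known case is the toric one with $k=n$, $l=n-1$, resolved in \cite{CSz14}. So there is no argument in the paper to compare yours against, and what you have written is, by your own account, a research programme rather than a proof. The central gap is exactly the one you flag: in the sufficiency direction you posit a functional $\mathcal{J}$ whose properness is \emph{asserted} to be equivalent to the existence of a $k$-positive $\chi'\in[\chi]$ satisfying \eqref{eq:aa2}, and then a structure theorem for the limiting current along a non-coercive sequence that would convert failure of properness into the violation of one of the intersection-number inequalities. Neither of these exists for general $1\leq l<k\leq n$; even for $k=n$, $l=n-1$ the second step is only known on toric manifolds (\cite{CSz14}), which is precisely why the statement remains a conjecture in the paper. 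Granting hypothetical black boxes that constitute the entire difficulty is not a proof, so the proposal cannot be accepted as establishing the statement.

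A smaller but genuine issue is the necessity direction, which you present as essentially done. Passing from the $(n-1,n-1)$-form inequality \eqref{eq:aa2} on $M$ to pointwise positivity of the $(p,p)$-form $c\,\tfrac{k!}{(k-n+p)!}\chi'^{k-n+p}\wedge\alpha^{n-k}-\tfrac{l!}{(l-n+p)!}\chi'^{l-n+p}\wedge\alpha^{n-l}$ restricted to an arbitrary $p$-dimensional subvariety is a nontrivial piece of linear algebra: the hypothesis controls certain combinations of $\sigma_{k-1}$ and $\sigma_{l-1}$ of the full eigenvalue vector on $(n-1)$-tuples, while the conclusion concerns $\sigma_{k-n+p}$ and $\sigma_{l-n+p}$ of the restriction of $\chi'$ to an arbitrary $p$-plane, and Cauchy interlacing alone does not obviously bridge the two once $l<n-1$ or $k<n$; ``one checks, exactly as for $k=n$, $l=n-1$'' is an appeal to a computation you have not carried out and which does not transfer verbatim. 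If you want to salvage a publishable fragment from this, the realistic targets are (i) a complete written proof of the necessity direction for general $l<k$, and (ii) the toric case you outline at the end, where the problem does reduce to convex analysis on the moment polytope in the spirit of \cite{CSz14}; both would be new contributions, but neither is contained in the present sketch.
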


As we mentioned in the introduction, this conjecture has recently been
resolved in \cite{CSz14} in the case when $M$ is a toric manifold, and
$k=n, l=n-1$, but cases beyond this are mostly open. Another
related problem is to characterize real (1,1)-classes which admit
$k$-positive representatives, in analogy with the result of
Demailly-Paun~\cite{DP04} in the case $k=n$. Such $k$-positive
representatives provide $\mathcal{C}$-subsolutions for the
Hessian equation \eqref{eq:hessianeqn}. 

An equation related to the complex Monge-Amp\`ere equation was
introduced by Fu-Wang-Wu~\cite{FWW10}. Given Hermitian metrics $\alpha,
\eta$ and a function $h$, the equation can be written as 
\[ \label{eq:formCY} \det\left( \eta_{i\bar j} + \frac{1}{n-1}\big[ (\Delta u)\alpha_{i\bar
    j} - u_{i\bar j}\big]\right) = e^{h} \det \alpha, \]
where we require that the form inside the determinant on the left hand
side is positive definite, and $\Delta$ denotes the (complex) Laplacian with
respect to $\alpha$. Let us define the form
\[ \label{eq:chidefn}
\chi_{i\bar j} = (\mathrm{tr}_\alpha\eta) \alpha_{i\bar j} - (n-1)\eta_{i\bar
  j}, \]
and denote by $T$ the map $T(A) = \frac{1}{n-1}(\mathrm{Tr}(A)I - A)$
on matrices, where $I$ is the identity. Write $g_{i\bar j} =
\chi_{i\bar j} + u_{i\bar j}$, and $A^i_j = \alpha^{i\bar p}g_{j\bar
  p}$ as before. Then Equation~\eqref{eq:formCY} is equivalent to the
equation
\[ \log\det(T(A)) = h. \]
In other words, we can write our equation as $F(A) = h$, where $F$ is
determined by the symmetric function
\[ f(\lambda) = \log \prod_{k=1}^n T(\lambda)_k, \]
where  abusing
notation we denote by $T:\mathbf{R}^n\to\mathbf{R}^n$ the map with
components
\[ T(\lambda)_k = \frac{1}{n-1}\sum_{i\ne k} \lambda_i, \]
giving the map on eigenvalues corresponding to the matrix map $T$
above. 
The conditions $(i), (ii), (iii)$ in the
introduction hold for this function $f$ on the pre-image
$T^{-1}(\Gamma_n)$ of the positive orthant under $T$, and so using
Theorem~\ref{thm:main} we can obtain a priori estimates. Moreover,
from Remark~\ref{rem:subsol2}, we see that $\underline{u}=0$ is a
$\mathcal{C}$-subsolution, whenever 
$\eta$ is positive definite in Equation~\eqref{eq:formCY}, since then
$f(\lambda)\to\infty$ if we let any one component of $\lambda$ go to
infinity.  In
particular the same argument as in the proof of
Proposition~\ref{prop:Hessian} can be used to show that if $\eta$ is
positive definite, then Equation~\eqref{eq:formCY} admits a smooth
solution $u$ for any smooth $h$, up to adding a constant to $h$. 
This recovers the result of
Tosatti-Weinkove~\cite{TW13_1} in the case when $\alpha$ is K\"ahler,
and Tosatti-Weinkove~\cite{TW13} when $\alpha$ is Hermitian (see also
Fu-Wang-Wu~\cite{FWW10_1} for an earlier result under a non-negative
curvature assumption for $\alpha$). 

One can also consider more general equations in this vein, for
instance those given by the functions
\[ f(\lambda) = \log
\frac{\sigma_k(T(\lambda))}{\sigma_l(T(\lambda))} \]
for $0\leq l < k \leq n$,
where $T:\mathbf{R}^n\to\mathbf{R}^n$ is the map above. 
Using that $T$ maps $\Gamma_n$ into itself, one sees that these $f$
satisfy the conditions $(i),(ii),(iii)$ in the introduction on the preimage
$T^{-1}(\Gamma_k)$ of the $k$-positive cone
$\Gamma_k$. When $l > 0$, the subsolution
condition is non-trivial (i.e. it depends on $h$), 
so we do not expect to be able to solve
$F(A) = h$ for all $h$, even up to adding a constant to $h$. 
However when $l=0$, and the background form
 $\chi$ is such  that $\underline{u}=0$ is a
$\mathcal{C}$-subsolution, then $0$ will be a
$\mathcal{C}$-subsolution for all $h$, since in this case we have
$\lim_{t\to\infty}f(\lambda + t\mathbf{e}_i)=\infty$ for all $i$. 
In particular, 
 the same argument as the proof of
Proposition~\ref{prop:Hessian} implies the following, which answers a
question raised in Tosatti-Wang-Weinkove-Yang~\cite{TWWY14}. 

\begin{prop}
  Let $(M,\alpha)$ be a compact Hermitian manifold, $\eta$ a
  $k$-positive real $(1,1)$-form, and $1\leq k\leq n$. 
  For any smooth function $H$ on $M$
  we can find a constant $c$ and a function $u$ satisfying the
  equation
  \[ \left(\eta + \frac{1}{n-1}\big[ (\Delta u)\alpha - \ddbar
    u\big]\right)^k \wedge \alpha^{n-k} = e^{H+c}\alpha^n. \]
Here $\Delta$ denotes the complex Laplacian with respect to $\alpha$,
i.e. $\Delta u = \alpha^{p\bar q}u_{p\bar q}$. 
\end{prop}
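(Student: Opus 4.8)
The plan is to run the same continuity argument as in the proof of Proposition~\ref{prop:Hessian}, using the fact that for the operator in question the subsolution condition is independent of the right-hand side. First I would rewrite the equation in the form $F(A)=h$ used throughout the paper: set $\chi_{i\bar j} = (\mathrm{tr}_\alpha\eta)\alpha_{i\bar j} - (n-1)\eta_{i\bar j}$, so that for $g = \chi + \ddbar u$ and $A^i_j = \alpha^{i\bar p}g_{j\bar p}$ one computes $T(A) = \alpha^{-1}\big(\eta + \tfrac1{n-1}[(\Delta u)\alpha - \ddbar u]\big)$, where $T(M) = \tfrac1{n-1}(\mathrm{Tr}(M)I - M)$. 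The operator is then $F(A)=f(\lambda(A))$ with $f(\lambda) = \log\sigma_k(T(\lambda))$ defined on the cone $\Gamma = T^{-1}(\Gamma_k)$, and the equation becomes $f(\lambda(A)) = H + c + \mathrm{const}$. Since $T$ carries $\Gamma_n$ into itself and $\sigma_k^{1/k}$ is concave and increasing on $\Gamma_k$, the structural hypotheses (i)--(iii) from the introduction hold for $f$ on $\Gamma$: here $\sup_{\d\Gamma} f = -\infty$, and $f(t\lambda) = k\log t + \log\sigma_k(T(\lambda)) \to +\infty$ as $t\to\infty$ for any $\lambda\in\Gamma$.

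The key point, distinguishing this from the general situation, is that $\underline u = 0$ is a $\mathcal C$-subsolution for $F(A)=h$ for \emph{every} $h$. Indeed, $k$-positivity of $\eta$ says precisely that $\lambda(T(B))\in\Gamma_k$ for $B^i_j = \alpha^{i\bar p}\chi_{j\bar p}$, i.e. $\lambda(B)\in\Gamma$, so $0$ is admissible. Moreover, for any $\lambda\in\Gamma$ and any $i$ the vector $T(\lambda + t\mathbf e_i)$ has $n-1$ entries tending to $+\infty$ and one entry fixed (equal to $T(\lambda)_i$, which is positive in the case $k=n$ since then $T(\lambda)\in\Gamma_n$), so $\sigma_k(T(\lambda + t\mathbf e_i))\to\infty$ and hence $f(\lambda + t\mathbf e_i)\to\infty$. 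By Remark~\ref{rem:subsol2} this forces $(\lambda(B) + \Gamma_n)\cap\d\Gamma^\sigma$ to be bounded for every $\sigma$, so $0$ is a $\mathcal C$-subsolution regardless of the right-hand side.

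Next I would set up the continuity method exactly as in Proposition~\ref{prop:Hessian}: let $H_0$ be defined by $\eta^k\wedge\alpha^{n-k} = e^{H_0}\alpha^n$ and seek to solve
\[
\log\frac{\big(\eta + \tfrac1{n-1}[(\Delta u_t)\alpha - \ddbar u_t]\big)^k\wedge\alpha^{n-k}}{\alpha^n} = tH + (1-t)H_0 + c_t,\qquad t\in[0,1],
\]
with $u_0 = 0$, $c_0 = 0$. Openness follows since the linearization $Lv = F^{ij}v_{i\bar j}$ is elliptic with no zeroth order term, so $(v,c)\mapsto Lv + c$ is surjective $C^{k,\beta}\times\mathbf R\to C^{k-2,\beta}$. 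For a priori estimates one first bounds $c_t$: at a maximum of $u_t$ one has $\ddbar u_t\le 0$, hence $\tfrac1{n-1}[(\Delta u_t)\alpha - \ddbar u_t]\le 0$ and so $T(A_t)\le\alpha^{-1}\eta$ as Hermitian endomorphisms; since $\lambda(T(A_t))\in\Gamma_k$ and $\sigma_k$ is monotone on $\Gamma_k$, this bounds the left-hand side from above and hence $c_t$ from above, and symmetrically a minimum of $u_t$ gives a lower bound. With $c_t$ controlled, the functions $tH + (1-t)H_0 + c_t$ stay bounded in $C^2$, so Theorem~\ref{thm:main} applies uniformly (with the fixed subsolution $0$), yielding a uniform $C^{2,\alpha}$ bound on $u_t$ after normalizing $\sup u_t = 0$. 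This gives closedness, the method reaches $t=1$, and $(u,c) = (u_1,c_1)$ solves the desired equation.

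The only substantive work is the verification in the second paragraph that $0$ is a $\mathcal C$-subsolution for all $h$ --- i.e. that the ``$l=0$'' structure of $\log\sigma_k\circ T$ makes the subsolution condition independent of the right-hand side --- together with the uniformity of the estimates along the path; everything else is a routine repetition of the argument of Proposition~\ref{prop:Hessian}. I expect the mildest subtlety to be checking that $T(\lambda)_i > 0$ in the case $k=n$, so that $\sigma_n(T(\lambda + t\mathbf e_i))\to\infty$; this holds because in that case $\Gamma = T^{-1}(\Gamma_n)$.
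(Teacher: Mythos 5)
Your proposal is correct and follows essentially the same route as the paper: rewrite the equation as $F(A)=h$ with $f=\log\sigma_k(T(\lambda))$ on $T^{-1}(\Gamma_k)$ and background form $\chi=(\mathrm{tr}_\alpha\eta)\alpha-(n-1)\eta$, observe via Remark~\ref{rem:subsol2} that $k$-positivity of $\eta$ makes $\underline{u}=0$ a $\mathcal{C}$-subsolution for every right-hand side, and then repeat the continuity-method argument of Proposition~\ref{prop:Hessian} (bounding $c_t$ at the extrema of $u_t$ and applying Theorem~\ref{thm:main} for uniform estimates). The only difference is that you spell out details the paper leaves implicit, such as the verification that $\sigma_k(T(\lambda+t\mathbf{e}_i))\to\infty$ and the monotonicity argument for the $c_t$ bound, all of which are accurate.
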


To see this, note that as for Equation~\eqref{eq:formCY}, we write the
equation in the form $F(A)=h$ where $A^i_j = \alpha^{i\bar
  p}(\chi_{j\bar p} + u_{j\bar p})$ in terms of the background form
$\chi$ given by \eqref{eq:chidefn}. If $\eta$ is $k$-positive, then
the eigenvalues of $\alpha^{i\bar p}\chi_{j\bar p} =
T^{-1}(\alpha^{i\bar p}\eta_{j\bar p})$ lie in the preimage
$T^{-1}(\Gamma_k)$ of the $k$-positive cone, and so $0$ is a
$\mathcal{C}$-subsolution.

\section{Equations on Riemannian manifolds}\label{sec:Riemannian} 
In this section we will describe how
the methods in this paper apply to equations analogous to
\eqref{eq:eqn} on Riemannian manifolds as well.  So in this section we
let $(M, \alpha)$ be a compact Riemannian manifold and $\chi$ a fixed
tensor of type (0,2). Suppose we are interested in solving the
equation
\[ F(A) = h \]
where analogously to before, $A$ is the endomorphism of the tangent
bundle given by $A^i_j = \alpha^{ip}(\chi_{jp} + u_{jp})$ for the
unknown function $u$, and $u_{jp}$ denote covariant derivatives with
respect to $\alpha$. This endomorphism is symmetric with respect to
the inner product defined by $\alpha$ at each point, and as before
$F(A) = f(\lambda(A))$ in terms of the eigenvalues $\lambda(A)$ of
$A$. We assume that $f$ satisfies the structural conditions (i), (ii),
(iii) from the introduction.

Everything that we have done in the Hermitian case applies in
this Riemannian setting as well, with almost exactly the same
proof. There are no torsion terms, but instead there are some extra curvature
terms obtained when commuting derivatives. We briefly note some of the
differences. In \eqref{eq:ff2} we obtain
\[ u_{11kk} = u_{kk11} + O(\lambda_1), \]
while in \eqref{eq:l7} we obtain
\[ g_{11k} = g_{k11} + O(K^{1/2}), \]
and so
\[ |g_{11k}|^2 \leq |g_{k11}|^2 + C_0(K^{1/2}|g_{1\bar 1k}| + K). \]
The upshot is that we have the following analogous inequality to
\eqref{eq:l5}:
\[ \label{eq:ff3}
L(\log \widetilde{\lambda}_1) \geq \frac{ -
  F^{pq,rs}g_{pq1}g_{rs1}}{\lambda_1} -
\frac{F^{kk}|g_{k11}|^2}{\lambda_1^2} - C_0(\mathcal{F} +
K^{1/2}|g_{1\bar 1k}|). \]
Using this the rest of the argument in the $C^2$-estimate is essentially identical. 

The Riemannian case has the distinct advantage in that when using the
concavity of the operator as in \eqref{eq:concaveF}, we get twice as
many useful terms as in the complex case:
\[ F^{ij,rs}u_{ij1}u_{rs1} \leq  2 \sum_{k > 1}
\frac{f_1-f_k}{\lambda_1 - \lambda_k} |u_{k11}|^2. \]
In the complex case the corresponding extra terms are $|u_{1\bar
  k1}|^2$, which do not appear to be useful in the estimates. A
consequence of this is that in the real case it is more straight
forward to control the bad negative term involving $|g_{11k}|^2$. 
See for instance
Guan-Jiao~\cite{GJ14} for such estimates. 

In the proof of Theorem~\ref{thm:DK}, when we start the induction
argument, the case $n=1$ corresponds to bounded linear functions on
$\mathbf{R}$ being constant, while the case $\Gamma= \Gamma_n$
corresponds to bounded convex functions on $\mathbf{R}^n$ being
constant.

Just as before, a function $\underline{u}$ is a
$\mathcal{C}$-subsolution for the equation $F(A)=h$, if the matrix
$B^i_j = \alpha^{ip}(\chi_{jp} + \underline{u}_{jp})$ is such that the
set $(\lambda(B) + \Gamma_n)\cap \d\Gamma^{h(x)}$ is bounded at each
$x\in M$. We then have the following.

\begin{prop} \label{prop:r2}
 Suppose that there exists a $\mathcal{C}$-subsolution
  $\underline{u}$ for the equation $F(A)=h$ as above. Normalizing $u$
  so that $\sup_M u=0$, we have a priori estimates $\Vert
  u\Vert_{C^{2,\alpha}} < C$, with constant depending on the
  background data as well as the subsolution $\underline{u}$. 
\end{prop}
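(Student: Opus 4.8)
The plan is to observe that Proposition~\ref{prop:r2} is the exact Riemannian analogue of Theorem~\ref{thm:main}, and that every ingredient used in the Hermitian case has already been shown above to carry over. So the proof is essentially a bookkeeping exercise, assembling the pieces in the same order as in the body of the paper.

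First I would establish the $C^0$-estimate. The argument of Proposition~\ref{prop:C0} used only: (a) the inclusion $\Gamma\subset\{\sum_i\lambda_i>0\}$, which forces $\mathrm{tr}_\alpha g>0$ and hence a lower bound for the Riemannian Laplacian $\Delta u$; (b) the weak Harnack inequality (Gilbarg--Trudinger) applied in coordinate balls, to get $\Vert|u|^p\Vert_{L^1}<C$; (c) the Alexandroff--Bakelman--Pucci argument of Proposition~\ref{prop:abp} together with the defining property of a $\mathcal{C}$-subsolution, exactly as in \eqref{eq:c1}--\eqref{eq:c5}. None of these steps uses the complex structure; in the Riemannian setting $u_{i\bar j}$ is simply replaced by the Hessian $u_{ij}$, and the contact set computation $\det(D^2 v)\le C\det(v_{ij})$ goes through verbatim. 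This yields $\sup_M|u|<C$.

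Next comes the Hessian-in-terms-of-gradient estimate, the analogue of Proposition~\ref{prop:HMW}. Here I would invoke the computation already carried out in Section~\ref{sec:Riemannian}: inequality \eqref{eq:ff3} replaces \eqref{eq:l5}, and the only new feature is the harmless term $C_0 K^{1/2}|g_{11k}|$, which is absorbed exactly as in the Hermitian argument using $G_k=0$ (one gains, if anything, from the doubled concavity terms $2\sum_{k>1}\frac{f_1-f_k}{\lambda_1-\lambda_k}|u_{k11}|^2$). The case split $-\lambda_n>\delta\lambda_1$ versus $-\lambda_n\le\delta\lambda_1$, the choices of $\phi,\psi$, and the use of Proposition~\ref{prop:subsol2} (valid for symmetric matrices with no change, since its proof only used the Schur--Horn theorem and Proposition~\ref{prop:subsol1}) all transfer. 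So $|\d\overline\d u|\le C(1+\sup_M|\nabla u|^2)$, i.e. $|D^2 u|\le C(1+\sup_M|\nabla u|^2)$ in the Riemannian sense.

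Finally I would run the blowup argument of Section~\ref{sec:blowup} to bound $|\nabla u|$. Rescaling $\widetilde\alpha=N\alpha$ and extracting a $C^{1,\alpha}$-limit $v:\mathbf{R}^n\to\mathbf{R}$ (nonconstant gradient at the origin, globally bounded) works identically; the curvature and torsion error terms all carry the factor $N^{-1}$ and vanish in the limit. One then checks that $v$ is a viscosity $\Gamma$-solution in the sense of Definition~\ref{defn:Gammasol}, now for functions on $\mathbf{R}^n$ with the Hessian in place of the complex Hessian, using Lemma~\ref{lem:struct}(a) exactly as before to rule out the supersolution touching from below. This contradicts the Liouville theorem Theorem~\ref{thm:DK} — whose proof, as already noted in Section~\ref{sec:Riemannian}, only needs adjusting the base cases of the induction ($n=1$: bounded affine functions on $\mathbf{R}$ are constant; $\Gamma=\Gamma_n$: bounded convex functions on $\mathbf{R}^n$ are constant) — so $\sup_M|\nabla u|<C$. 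Combining with the previous step bounds $|D^2 u|$, and the Evans--Krylov theory for concave fully nonlinear elliptic equations then upgrades this to the $C^{2,\alpha}$-estimate. The main point that requires a moment's care, rather than any real obstacle, is verifying that the ABP/weak-Harnack $C^0$-argument and the Liouville theorem's inductive structure genuinely have no complex-analytic content; once that is granted, the proof is a direct translation of Sections~\ref{sec:C0}--\ref{sec:blowup}.
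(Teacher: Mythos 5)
Your proposal is correct and follows essentially the same route as the paper: the paper's treatment of Proposition~\ref{prop:r2} consists precisely of noting that the $C^0$-estimate, the $C^2$-estimate (with the commutation identities replaced by \eqref{eq:ff3} and the doubled concavity terms), the blowup argument, and the Liouville theorem (with the adjusted base cases $n=1$ and $\Gamma=\Gamma_n$) all transfer verbatim to the Riemannian setting, followed by Evans--Krylov. Your assembly of these ingredients matches the paper's argument step for step.
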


This result generalizes several earlier results on these types of
equations on compact Riemannian manifolds, such as Li~\cite{Li90},
Delano\"e~\cite{Del03} who made non-negative curvature assumptions,
and Urbas~\cite{Urb04}, Guan~\cite{Guan14}, who have stronger
structural assumptions. In particular in Urbas~\cite{Urb04} the
question of solving the Hessian quotient equations analogous to
\eqref{eq:hqeq} on compact
Riemannian manifolds is raised. This is formulated as the equation
\[ \log F(A) = h + c, \]
where $h$ is a given function, the function $u$ and constant $c$
are the unknowns, and $F$ is given by the function
\[ f = \left(\frac{\sigma_k}{\sigma_l}\right)^{\frac{1}{k-l}}, \]
for some  $1 \leq l < k \leq n$. 

In analogy with the K\"ahler case, it is natural to
expect that these equations do not always have a solution, but it
seems to be difficult to formulate a condition as precise as that in
Conjecture~\ref{conj:hessq}. Instead we formulate a general existence
result, focusing for simplicity on equations of the form $F(A) = c$
with constant $c$.  Note
that if $F$ is homogeneous and positive, then the restriction to constant right
hand side can be removed by scaling the metric $\alpha$. 

\begin{prop} \label{prop:riemexist}
  Suppose that $\sup_{\d\Gamma} f = -\infty$, and
  $sup_\Gamma f = \infty$. Let $h_0 =   F(\alpha^{ip}\chi_{jp})$. If the equation $F(A)
  = \sup_M h_0$ admits a $\mathcal{C}$-subsolution $\underline{u}$,
  then the equation $ F(A) =
  c$ has a solution for some constant $c$. 
\end{prop}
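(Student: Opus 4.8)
The plan is to solve the target equation $F(A)=c$ by a continuity method along the family
\[ F(A_t)=(1-t)h_0+c_t,\qquad t\in[0,1], \]
where $A_t{}^i_j=\alpha^{ip}(\chi_{jp}+(u_t)_{jp})$, and the unknowns are the function $u_t$, normalized by $\sup_M u_t=0$, together with the constant $c_t$. Since $h_0=F(\alpha^{ip}\chi_{jp})$ is assumed to be defined, $\lambda(\alpha^{ip}\chi_{jp})\in\Gamma$, so at $t=0$ we have the solution $u_0=0$, $c_0=0$, while at $t=1$ the equation is $F(A_1)=c_1$, with constant right-hand side, which is exactly what we want. Let $S\subset[0,1]$ be the set of $t$ for which a solution $(u_t,c_t)$ exists. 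Then $0\in S$, and $S$ is open exactly as in the proof of Proposition~\ref{prop:Hessian}: the linearization is the map $(v,c)\mapsto Lv+c$ from $C^{k,\beta}\times\mathbf{R}$ to $C^{k-2,\beta}$, which is surjective because the linearized operator $L$ has no zeroth-order term and hence one-dimensional cokernel. It therefore remains to show that $S$ is closed, for which, by the Riemannian a priori estimate Proposition~\ref{prop:r2}, it suffices to produce a uniform $C^2$-bound on the right-hand sides $h_t:=(1-t)h_0+c_t$ together with a uniform $\mathcal{C}$-subsolution for the equations $F(A_t)=h_t$.

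First I would bound $c_t$ by the maximum principle. Write $B_0{}^i_j=\alpha^{ip}\chi_{jp}$. At a point $x_{\max}$ where $u_t$ attains its maximum the covariant Hessian $(u_t)_{jp}$ is negative semidefinite, so $A_t-B_0$ is a negative semidefinite $\alpha$-self-adjoint endomorphism there; hence $\lambda_k(A_t(x_{\max}))\le\lambda_k(B_0(x_{\max}))$ for all $k$, and since both $n$-tuples lie in $\Gamma$ and $f_i>0$ throughout the convex set $\Gamma$,
\[ (1-t)h_0(x_{\max})+c_t=F(A_t(x_{\max}))\le F(B_0(x_{\max}))=h_0(x_{\max}). \]
This gives $c_t\le t\,h_0(x_{\max})\le t\sup_M h_0$, and therefore
\[ h_t(x)=(1-t)h_0(x)+c_t\le(1-t)\sup_M h_0+t\sup_M h_0=\sup_M h_0\qquad\text{for all }x,\ t. \]
The same argument at the minimum of $u_t$ gives $c_t\ge t\inf_M h_0$, so $|c_t|\le\Vert h_0\Vert_{C^0}$; consequently $h_t$ is bounded in $C^2$ uniformly in $t$. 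Because $\sup_{\d\Gamma}f=-\infty$ and $\sup_\Gamma f=\infty$, each $h_t$ takes values in a fixed compact subinterval of $(\sup_{\d\Gamma}f,\sup_\Gamma f)=\mathbf{R}$, and the structural conditions (i), (ii), (iii) hold with right-hand side $h_t$.

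Next comes the key point, the uniform $\mathcal{C}$-subsolution. Let $B^i_j=\alpha^{ip}(\chi_{jp}+\underline{u}_{jp})$. By Remark~\ref{rem:subsol2}, the hypothesis that $\underline{u}$ is a $\mathcal{C}$-subsolution for $F(A)=\sup_M h_0$ means that $\lim_{s\to\infty}f(\lambda(B(x))+s\mathbf{e}_i)>\sup_M h_0$ for every $i$ and every $x\in M$. Since $h_t(x)\le\sup_M h_0$ by the previous step, the same inequality holds with $\sup_M h_0$ replaced by $h_t(x)$, so again by Remark~\ref{rem:subsol2} the function $\underline{u}$ is a $\mathcal{C}$-subsolution for each equation $F(A_t)=h_t$. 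Moreover, as $h_t$ ranges over a fixed compact subinterval of $\mathbf{R}$ and $M$ is compact, the sets $(\lambda(B(x))-2\delta\mathbf{1}+\Gamma_n)\cap\d\Gamma^{h_t(x)}$ move in a compact family (as in the proof of Proposition~\ref{prop:subsol2}), hence lie in a fixed ball $B_R(0)$ for suitable $\delta,R>0$ independent of $t$ and $x$. Feeding this uniform $\mathcal{C}$-subsolution, the uniform $C^2$-bound on $h_t$, and the normalization $\sup_M u_t=0$ into Proposition~\ref{prop:r2} (the Riemannian analogue of Theorem~\ref{thm:main}), we get a uniform bound $\Vert u_t\Vert_{C^{2,\alpha}}<C$. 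A standard limiting argument, using assumption (ii) (valid since $\sup_{\d\Gamma}f=-\infty$) to keep the limiting eigenvalues in a compact subset of $\Gamma$, then shows that $S$ is closed, so $1\in S$ and $u:=u_1$ solves $F(A)=c$ with $c:=c_1$.

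The main obstacle, as already anticipated in the introduction, is to retain a $\mathcal{C}$-subsolution along the whole continuity path even though the constants $c_t$ are not known a priori. This is exactly what dictates the choice of path — so that the right-hand side is a convex combination of $h_0$ and the unknown constant $c_t$ — and it is precisely at this step, via the maximum-principle inequality $h_t\le\sup_M h_0$ together with the hypotheses $\sup_{\d\Gamma}f=-\infty$ and $\sup_\Gamma f=\infty$, that one ensures both that $\underline{u}$ stays a $\mathcal{C}$-subsolution and that the structural conditions on $f$ persist for every right-hand side occurring along the path.
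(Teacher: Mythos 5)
Your proposal is correct and follows essentially the same route as the paper: the same continuity path $F(A_t)=(1-t)h_0+c_t$, the same maximum-principle bounds $t\inf_M h_0\le c_t\le t\sup_M h_0$ giving $h_t\le\sup_M h_0$, and the same observation that the subsolution hypothesis at level $\sup_M h_0$ then persists along the whole path, so that Proposition~\ref{prop:r2} closes the argument. The extra remarks you add (uniformity of $\delta,R$ via compactness of the family of level sets, and the min-max comparison of eigenvalues at the extremum points) are correct elaborations of steps the paper leaves implicit.
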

\begin{proof}We want to use the
  continuity method to solve the equations
  \[\label{eq:cc2} F(A) = c_t + (1-t)h_0, \]
  for $t\in [0,1]$ with constants $c_t$. For $t=0$ the solution is
  $u=c_0 = 0$, and openness follows using the implicit function
  theorem as before.

  To find a priori estimates, the only thing we need is
  $\mathcal{C}$-subsolutions for each $t$, and we need to make sure
  that the range of the right hand side $c_t + (1-t)h_0$ is contained
  in a compact subset of the range of $f$ in order to obtain uniform
  constants.  Suppose that $u$ is a solution of \eqref{eq:cc2} and $u$
  achieves its minimum and maximum at $p\in M$ and $q\in M$
  respectively. We then have $F(A) \geq F(\alpha^{ip}\chi_{jp})$ at
  $p$ and $F(A)\leq F(\alpha^{ip}\chi{jp})$ at $q$. It follows that
  \[ h_0(p) \leq c_t + (1-t) h_0(p), \]
i.e. $c_t \geq  th_0(p)$, and similarly $c_t \leq th_0(q)$. In
particular we obtain upper and lower bounds for $c_t + (1-t)h_0$,
whose range is then in a compact subset of the range of $f$ by our
assumption for $f$. More precisely at any $x\in M$ we have
\[ c_t + (1-t)h_0(x) \leq th_0(q) + (1-t)h_0(x) \leq \sup_M h_0, \]
which implies that  $\underline{u}$ is a $\mathcal{C}$-subsolution for
Equation~\eqref{eq:cc2} for each $t$. Proposition~\ref{prop:r2} then
implies the required estimates. 
\end{proof}

\subsection*{Acknowledgements}
I would like to thank Tristan Collins, Bo
Guan, Duong Phong, Ovidiu Savin, Wei Sun, and
 Valentino Tosatti  for helpful comments and
discussions. The author is supported by NSF
grants DMS-1306298 and DMS-1350696.

\bibliographystyle{siam}
\bibliography{../mybib}

\begin{thebibliography}{10}

\bibitem{Bl05_1}
{\sc Z.~B\l{}ocki}, {\em On uniform estimate in {C}alabi-{Y}au theorem}, Sci.
  China Ser. A, 48 (2005), pp.~244--247.

\bibitem{CC95}
{\sc L.~Caffarelli and X.~Cabr\'e}, {\em Fully nonlinear elliptic equations},
  vol.~43 of American Mathematical Society Colloquium Publications, American
  Mathematical Society, Providence, RI, 1995.

\bibitem{CNS3}
{\sc L.~Caffarelli, L.~Nirenberg, and J.~Spruck}, {\em The {D}irichlet problem
  for nonlinear second-order elliptic equations {III}: {F}unctions of the
  eigenvalues of the {H}essian}, Acta Math., 155 (1985), pp.~261--301.

\bibitem{Ch87}
{\sc P.~Cherrier}, {\em \'equations de {M}onge-{A}mp\`ere sur les vari\'et\'es
  {H}ermitiennes compactes}, Bull. Sci. Math. (2), 111 (1987), pp.~343--385.

\bibitem{CW01}
{\sc K.-S. Chou and X.-J. Wang}, {\em A variational theory of the {H}essian
  equation}, Comm. Pure Appl. Math., 54 (2001), pp.~1029--1064.

\bibitem{CSz14}
{\sc T.~Collins and G.~Sz\'ekelyhidi}, {\em Convergence of the {J}-flow on
  toric manifolds}, arXiv:1412.4809.

\bibitem{Del03}
{\sc P.~Delano\"e}, {\em Hessian equations on compact non-negatively curved
  {R}iemannian manifolds}, Calc. Var. Partial Differential Equations, 16
  (2003), pp.~165--176.

\bibitem{DP04}
{\sc J.-P. Demailly and M.~Paun}, {\em Numerical characterization of the
  {K}\"ahler cone of a compact {K}\"ahler manifold}, Ann. of Math. (2), 159
  (2004), pp.~1247--1274.

\bibitem{DK12}
{\sc S.~Dinew and S.~Ko\l{}odziej}, {\em Liouville and {C}alabi-{Y}au type
  theorems for complex {H}essian equations}, arXiv:1203.3995.

\bibitem{Ev82}
{\sc L.~C. Evans}, {\em Classical solutions of fully nonlinear, convex, second
  order elliptic equations}, Comm. Pure Appl. Math., 25 (1982), pp.~333--363.

\bibitem{FL12_1}
{\sc H.~Fang and M.~Lai}, {\em On the geometric flows solving {K}{\"a}hlerian
  inverse {$\sigma_k$} equations}, Pacific J. Math., 258 (2012), pp.~291--304.

\bibitem{FLM11}
{\sc H.~Fang, M.~Lai, and X.~Ma}, {\em On a class of fully nonlinear flows in
  {K}\"ahler geometry}, J. Reine Angew. Math., 653 (2011), pp.~189--220.

\bibitem{FWW10_1}
{\sc J.~Fu, Z.~Wang, and D.~Wu}, {\em Form-type {C}alabi-{Y}au equations on
  {K}\"ahler manifolds of nonnegative orthogonal bisectional curvature},
  arXiv:1010.2022.

\bibitem{FWW10}
\leavevmode\vrule height 2pt depth -1.6pt width 23pt, {\em Form-type
  {C}alabi-{Y}au equations}, Math. Res. Lett., 17 (2010), pp.~887--903.

\bibitem{Ge96}
{\sc C.~Gerhardt}, {\em Closed {W}eingarten hypersurfaces in {R}iemannian
  manifolds}, J. Differential Geom., 43 (1996), pp.~612--641.

\bibitem{GT98}
{\sc D.~Gilbarg and N.~S. Trudinger}, {\em Elliptic partial differential
  equations of second order}, Classics in Mathematics, Springer-Verlag, Berlin,
  reprint of the 1998~ed., 2001.

\bibitem{Guan14_1}
{\sc B.~Guan}, {\em The {D}irichlet problem for fully nonlinear elliptic
  equations on {R}iemannian manifolds}, arXiv:1403.2133.

\bibitem{Guan14}
\leavevmode\vrule height 2pt depth -1.6pt width 23pt, {\em Second-order
  estimates and regularity for fully nonlinear elliptic equations on
  {R}iemannian manifolds}, Duke Math. J., 163 (2014), pp.~1491--1524.

\bibitem{GJ14}
{\sc B.~Guan and H.~Jiao}, {\em Second order estimates for hessian type fully
  nonlinear elliptic equations on riemannian manifolds}, arXiv:1401.7391.

\bibitem{GL09}
{\sc B.~Guan and Q.~Li}, {\em Complex {M}onge-{A}mp\`ere equations on
  {H}ermitian manifolds}, arXiv:0906.3548.

\bibitem{GS13}
{\sc B.~Guan and W.~Sun}, {\em On a class of fully nonlinear elliptic equations
  on {H}ermitian manifolds}, arXiv:1301.5863.

\bibitem{Ha96}
{\sc A.~Hanani}, {\em {\'E}quations du type de {M}onge-{A}mp\`ere sur les
  vari\'et\'es hermitiennes compactes}, J. Funct. Anal., 137 (1996),
  pp.~49--75.

\bibitem{HMW10}
{\sc Z.~Hou, X.-N. Ma, and D.~Wu}, {\em A second order estimate for complex
  {H}essian equations on a compact {K}\"ahler manifold}, Math. Res. Lett., 17
  (2010), pp.~547--561.

\bibitem{Kry82}
{\sc N.~V. Krylov}, {\em Boundedly nonhomogeneous elliptic and parabolic
  equations}, Izvestia Akad. Nauk. SSSR, 46 (1982), pp.~487--523.

\bibitem{LSz13}
{\sc M.~Lejmi and G.~Sz\'ekelyhidi}, {\em The {J}-flow and stability},
  arXiv:1309.2821.

\bibitem{LS04_1}
{\sc S.-Y. Li}, {\em On the {D}irichlet problems for symmetric function
  equations of the eigenvalues of the complex {H}essian}, Asian J. Math., 8
  (2004), pp.~87--106.

\bibitem{Li90}
{\sc Y.-Y. Li}, {\em Some existence results for fully nonlinear elliptic
  equations of {M}onge-{A}mp\`ere type}, Comm. Pure Appl. Math., 43 (1990),
  pp.~233--271.

\bibitem{PS09}
{\sc D.~Phong and J.~Sturm}, {\em The {D}irichlet problem for degenerate
  complex {M}onge-{A}mp\`ere equations}, arXiv:0904.1898.

\bibitem{PSS12_1}
{\sc D.~H. Phong, J.~Song, and J.~Sturm}, {\em Complex {M}onge-{A}mp\`ere
  equations}, in Surveys in differential geometry. {V}ol. {XVII}, vol.~17, Int.
  Press, Boston, MA, 2012, pp.~327--410.

\bibitem{SW04}
{\sc J.~Song and B.~Weinkove}, {\em On the convergence and singularities of the
  {J}-flow with applications to the {M}abuchi energy}, Comm. Pure Appl. Math.,
  61 (2008), pp.~210--229.

\bibitem{Spruck05}
{\sc J.~Spruck}, {\em Geometric aspects of the theory of fully nonlinear
  elliptic equations}, in Global theory of minimal surfaces, vol.~2, Amer.
  Math. Soc., Providence, RI, 2005, pp.~283--309.

\bibitem{Sun14_2}
{\sc W.~Sun}, {\em Complex {H}essian equations on closed {H}ermitian
  manifolds}, preprint.

\bibitem{Sun13}
\leavevmode\vrule height 2pt depth -1.6pt width 23pt, {\em On a class of fully
  nonlinear elliptic equations on closed {H}ermitian manifolds},
  arXiv:1310.0362.

\bibitem{Sun14}
\leavevmode\vrule height 2pt depth -1.6pt width 23pt, {\em On a class of fully
  nonlinear elliptic equations on closed {H}ermitian manifolds {II}:
  {$L^\infty$} estimate}, arXiv:1407.7630.

\bibitem{Sun14_1}
\leavevmode\vrule height 2pt depth -1.6pt width 23pt, {\em On uniform estimate
  of complex elliptic equations on closed {H}ermitian manifolds},
  arXiv:1412.5001.

\bibitem{TWWY14}
{\sc V.~Tosatti, Y.~Wang, B.~Weinkove, and X.~Yang}, {\em {$C^{2,\alpha}$}
  estimates for nonlinear elliptic equations in complex and almost complex
  geometry}, arXiv:1402.0554.

\bibitem{TW13}
{\sc V.~Tosatti and B.~Weinkove}, {\em Hermitian metrics, $(n-1,n-1)$ forms and
  {M}onge-amp\`ere equations}, arXiv:1310.6326.

\bibitem{TW13_1}
\leavevmode\vrule height 2pt depth -1.6pt width 23pt, {\em The
  {M}onge-{A}mp\`ere equation for (n-1)-plurisubharmonic functions on a compact
  {K}\"ahler manifold}, arXiv:1305.7511.

\bibitem{TW10}
\leavevmode\vrule height 2pt depth -1.6pt width 23pt, {\em The complex
  {M}onge-{A}mp\`ere equation on compact hermitian manifolds}, J. Amer. Math.
  Soc., 23 (2010), pp.~1187--1195.

\bibitem{TW10_1}
\leavevmode\vrule height 2pt depth -1.6pt width 23pt, {\em Estimates for the
  complex {M}onge-{A}mp\`ere equation on {H}ermitian and balanced manifolds},
  Asian J. Math., 14 (2010), pp.~19--40.

\bibitem{Tru95}
{\sc N.~Trudinger}, {\em On the {D}irichlet problem for {H}essian equations},
  Acta Math., 175 (1995), pp.~151--164.

\bibitem{Urb04}
{\sc J.~Urbas}, {\em Hessian equations on compact {R}iemannian manifolds}, in
  Nonlinear problems in mathematical physics and related topics, {II}, vol.~2
  of Int. Math. Ser. (N. Y.), Kluwer/Plenum, New York, 2002, pp.~367--377.

\bibitem{W06}
{\sc B.~Weinkove}, {\em On the {J}-flow in higher dimensions and the lower
  boundedness of the {M}abuchi energy}, J. Differential Geom., 73 (2006),
  pp.~351--358.

\bibitem{Yau78}
{\sc S.-T. Yau}, {\em On the {R}icci curvature of a compact {K}\"ahler manifold
  and the complex {M}onge-{A}mp\`ere equation {I}.}, Comm. Pure Appl. Math., 31
  (1978), pp.~339--411.

\bibitem{DZ15}
{\sc D.~Zhang}, {\em Hessian equations on closed {H}ermitian manifolds},
  arXiv:1501.03553.

\end{thebibliography}

\end{document}